\def\Pip{\smash{P_i^{(p)}}}
\def\Pjp{\smash{P_j^{(p)}}}
\def\uip#1{{u_{#1}^{(p)}}}
\def\hlinespace{\vrule height 11pt depth 4pt width 0pt}
\def\matrixspace{\vrule height 9pt depth 9pt width 0pt}
\def\PA {P_i}
\def\PB {P_j}
\renewcommand{\sitep}{\textbf{\textcolor[rgb]{1,0,0}{{\small$+$}}}}
\renewcommand{\sitem}{\textbf{\textcolor[rgb]{0,0,1}{{\small$-$}}}}
\renewcommand{\sitea}{\textbf{\textcolor[rgb]{1,0,0}{{\small$\alpha$}}}}
\renewcommand{\siteb}{\textbf{\textcolor[rgb]{0,0,1}{{\small$\beta$}}}}
\begin{document}


\title{The Eyring--Kramers law for \\ Markovian jump processes with symmetries}
\author{Nils Berglund, S\'ebastien Dutercq}
\date{}   

\maketitle

\begin{abstract}
\noindent
We prove an Eyring--Kramers law for the small eigenvalues and mean
first-passage times of a metastable Markovian jump process which is invariant
under a group of symmetries. Our results show that the usual Eyring--Kramers law
for asymmetric processes has to be corrected by a factor computable in
terms of stabilisers of group orbits. Furthermore, the symmetry can produce
additional Arrhenius exponents and modify the spectral gap. The results are
based on representation theory of finite groups. 
\end{abstract}

\leftline{\small{\it Date.\/} December 3, 2013. Revised. October 3, 2014.}
\leftline{\small 2010 {\it Mathematical Subject Classification.\/} 
60J27    
(primary), 
20C35,   
60K35    
(secondary)}
\noindent{\small{\it Keywords and phrases.\/}
Metastability,
Kramers' law, 
stochastic exit problem, 
first-passage time, 
Markovian jump process,
spectral theory, 
symmetry group, 
representation theory%
.}  


\section{Introduction}
\label{sec_intro}

The Eyring--Kramers law characterises the mean transition times between local
minima of a diffusion in a potential landscape. 
Consider the stochastic differential equation 
\begin{equation}
 \label{meta03}
 \6x_t = - \nabla V(x_t)\6t + \sqrt{2\eps} \6W_t\;,
\end{equation} 
where $V:\R^d\to\R$ is a confining potential, and $W_t$ is a $d$-dimensional
standard Brownian motion. If $V$ has just two quadratic local minima $x^*$ and
$y^*$, separated by a quadratic saddle $z^*$, the Eyring--Kramers law states
that the expected first-passage time $\tau$ from $x^*$ to a small ball around
$y^*$ is given by 
\begin{equation}
 \label{meta05}
  \expecin{x^*}{\tau} = \frac{2\pi}{\abs{\lambda_1(z^\star)}}
\sqrt{\frac{\abs{\det(\hessian V(z^\star))}}{\det(\hessian V(x^\star))}}
\e^{[V(z^\star)-V(x^\star)]/\eps} [1+\Order{\eps^{1/2}\abs{\log\eps}^{3/2}}]\;.
\end{equation} 
Here $\hessian V(x^\star)$ and $\hessian V(z^\star)$ denote the Hessian
matrices of $V$ at $x^\star$ and $z^\star$ respectively, and
$\lambda_1(z^\star)$ is the unique negative eigenvalue of $\hessian V(z^\star)$
($z^\star$ is called a saddle of Morse index~$1$). A critical point is called
quadratic if the Hessian matrix is non-singular. 

The exponential behaviour in $\e^{[V(z^\star)-V(x^\star)]/\eps}$
of~\eqref{meta05} was first proposed by van t'Hoff, and justified physically by
Arrhenius~\cite{Arrhenius}. The more precise formula with the prefactors
depending on Hessians was introduced by Eyring~\cite{Eyring} and
Kramers~\cite{Kramers}. Mathematical proofs for these formulas are much more
recent. The Arrhenius law has first been justified by Wentzell and Freidlin,
using the theory of large deviations~\cite{VF69,VF70}. The first rigorous proof
of the full Eyring--Kramers law~\eqref{meta05} was provided by Bovier, Eckhoff,
Gayrard and Klein, using potential-theoretic methods~\cite{BEGK,BGK}. These
methods have also been applied to lattice models~\cite{BEGK_MC,denHollander04}
and used to extend the Eyring--Kramers law to systems with non-quadratic
saddles~\cite{BG2010}
and to stochastic partial differential equations~\cite{BG12a,Barret_2012}. 
Other approaches to proving~\eqref{meta05} include an analysis of
Witten Laplacians acting on
$p$-forms~\cite{HelfferKleinNier04,LePeutrecNierViterbo12}. See for
instance~\cite{Berglund_irs_MPRF} for a recent survey. 

If the potential $V$ has $N>2$ local minima, the characterisation of metastable
timescales becomes more involved. It has been known for a long time that the
diffusion's generator admits $N$ exponentially small eigenvalues, and that they
are connected to  metastable
timescales~\cite{Mathieu1995,Miclo1995,KolokoltsovMakarov1996,Kolokoltsov00}.
A method introduced by Wentzell, cf~\cite{Ventcel_1972} and \cite[Chapter
6]{FW}, based on so-called $W$-graphs, provides a way to compute the
logarithmic asymptotics of these eigenvalues. That is, Wentzell's algorithm
yields Arrhenius exponents $H_k$ such that the $k$-th small eigenvalue
$\lambda_k$ behaves like $-\e^{-H_k/\eps}$, in the sense that
$\eps\log(-\lambda_k)$ converges to $-H_k$ as $\eps\to0$. In fact, the $W$-graph
algorithm does not require the drift term to be in gradient form, as
in~\eqref{meta03}.

However, for the gradient system~\eqref{meta03}, much more precise
estimates are available. Indeed, results in~\cite{BEGK,BGK} provide expressions
of the form $\lambda_k = -C_k(\eps)\e^{-H_k/\eps}$, where $C_k(0)$ is known
explicitly. Furthermore, one can order the local minima $x^*_1, \dots, x^*_N$ of
$V$ in such a way that the mean transition time from each $x^*_k$ to the set
$\set{x^*_1,\dots,x^*_{k-1}}$ of its predecessors is close to the inverse of
$\lambda_k$. 

The only limitation of these results is that they require a non-degeneracy
condition to hold. In short, all relevant saddle heights have to be different
(see Section~\ref{ssec_asym} for a precise formulation). While this condition
holds for generic potentials $V$, it will fail whenever the potential is
invariant under a symmetry group $G$. Let us mention two examples of such
potentials, which will serve as illustrations of the theory throughout this
work.

\begin{example}
\label{ex:Glauber} 
The papers~\cite{BFG06a,BFG06b} introduce a model with $N$ particles on the
periodic lattice $\Lambda=\Z_N:=\Z/N\Z$, which are coupled harmonically to their
nearest neighbours, and subjected to a local double-well potential
$U(y)=\frac14y^4 - \frac12y^2$. The associated potential reads 
\begin{equation}
 \label{meta04} 
 V_\gamma(x) = \sum_{i\in\Lambda} U(x_i) + \frac{\gamma}{4}
\sum_{i\in\Lambda}(x_{i+1}-x_i)^2\;.
\end{equation} 
The potential $V_\gamma$ is invariant under the group $G$ generated by three
transformations: the cyclic permutation $r:x\mapsto(x_2,\dots,x_N,x_1)$, the
reflection $s:x\mapsto(x_N,\dots,x_1)$ and the sign change $c:x\mapsto-x$. The
transformations $r$ and $s$ generate the dihedral group $D_N$ of isometries
preserving a regular $N$-gon, and since $c$ commutes with $r$ and $s$, $G$ is
the direct product $D_N\times \Z_2$. 

It has been shown in~\cite{BFG06a} that for weak coupling $\gamma$, the model
behaves like an Ising model with Glauber spin-flip
dynamics~\cite[Section~3]{denHollander04}, while for large $\gamma$ the systems
synchronizes, meaning that all components $x_i$ tend to be equal. 
\end{example}

\begin{example}
\label{ex:Kawasaki} 
Consider a variant of the previous model, obtained by restricting $V_\gamma$ to
the space $\setsuch{x\in\R^\Lambda}{\sum x_i=0}$. For weak coupling, this
system will mimic a Kawasaki-type dynamics with conserved \lq\lq particle\rq\rq\
number~\cite[Section~4]{denHollander04}. The symmetry group $G$ is the same as
in the previous example. 
\end{example}

The potential-theoretic approach has been extended to some particular 
degenerate situations, by computing equivalent capacities for systems of
capacitors in series or in parallel~\cite[Chapter 1.2]{Barret_thesis}. This is
close in spirit to the electric-network analogy for random
walks on graphs~\cite{Doyle_Snell}. However, for more complicated symmetric
potentials admitting many local minima, the computation of equivalent
capacities becomes untractable. This is why we develop in this work a general
approach based on Frobenius' representation theory for finite groups. The basic
idea is that each irreducible representation of the group $G$ will yield a
subset of the generator's eigenvalues. The trivial representation corresponds
to initial distributions which are invariant under $G$, while all other
representations are associated with non-invariant distributions. 

In the present work, we concentrate on the case where the process itself is a
Markovian jump process, with states given by the local minima, and transition
probabilities governed by the Eyring--Kramers law between neighbouring minima. 
The study of jump processes is also of independent interest: see
for instance~\cite{Hwang_Scheu_92,Chiang_Chow_93,Trouve_1996} for applications
to simulated annealing and image processing.
We expect that the results can be extended to diffusions of the
form~\eqref{meta01}, is a similar way as in the asymmetric case. The main
results are Theorems~\ref{thm_dim1_trivial}, \ref{thm_dim1_nontrivial}
and~\ref{thm_dimd} in Section~\ref{sec_res}, which provide sharp estimates for
the eigenvalues and relate them to mean transition times.
More precisely, we show that the generator's eigenvalues are of the form 
\begin{equation}
 \label{eq:meta04b}
 \lambda_k = -C_k \e^{-H_k/\eps} \brak{1+\Order{\e^{-\theta/\eps}}}
\end{equation} 
for some $\theta>0$, with explicit expressions of the $C_k$ and $H_k$ in terms
of group orbits and stabilisers. 
While the $H_k$ can also be obtained by algorithms based on $W$-graphs, our
approach provides in addition the prefactors $C_k$ at a comparable computational
cost. The results show in particular that a phenomenon of clustering of
eigenvalues takes place: there are in general many eigenvalues sharing the same
Arrhenius exponent $H_k$, but having possibly different prefactors $C_k$. The
precise determination of the $C_k$ is thus important in order to be able to
distinguish eigenvalues in a same cluster. We also provide a probabilistic
interpretation for this clustering.

The remainder of the article is organised as follows. In Section~\ref{sec_set},
we define the main objects, recall results from the asymmetric case, as well as
some elements of representation theory of finite groups. Section~\ref{sec_res}
contains the results on eigenvalues and transition times for processes that are
invariant under a group of symmetries. These results are illustrated in
Section~\ref{sec_ex} for two cases
of Example~\ref{ex:Kawasaki}. The remaining sections contain the proofs of these
results. Section~\ref{sec_proofG} collects all proofs related to representation
theory, Section~\ref{sec_proofT} contains the estimates of eigenvalues, and
Section~\ref{sec_hit} establishes the links with mean transition times. Finally
Appendix~\ref{appendix} gives some information on the computation of the
potential landscape of Example~\ref{ex:Kawasaki}.  

\medskip

{\it Notations:} We denote by $a\wedge b$ the minimum of two real numbers $a$
and $b$, and by $a\vee b$ their maximum. If $A$ is a finite set, $\abs{A}$
denotes its cardinality, and $\indicator{A}(x)$ denotes the indicator function
of $x\in A$. We write $\one$ for the identity matrix, and $\vone$
for the constant vector with all components equal to $1$. The results concern
Markovian jump processes $\set{X_t}_{t\geqs0}$ on finite sets $\cX$, defined on
a filtered probability space $(\Omega,\cF,\fP,\set{\cF_t}_{t\geqs0})$. We denote
their generator by $L$, that is, $L$ is a matrix with non-negative off-diagonal
elements, and zero row sums. The law of $X_t$ starting with an initial
distribution $\mu$ is denoted $\probin{\mu}{\cdot}$, and
$\expecin{\mu}{\cdot}$ stands for associated expectations. If $\mu=\delta_i$ is
concentrated in a single point, we write $\probin{i}{\cdot}$ and
$\expecin{i}{\cdot}$.



\section{Setting}
\label{sec_set}


\subsection{Metastable markovian jump processes}
\label{ssec_meta}

Let $\cX$ be a finite set, and let $L$ be the generator of an irreducible 
Markovian jump process on $\cX$. We assume that the elements of $L$ can be
written in the form 
\begin{equation}
 \label{meta01}
 L_{ij} = \frac{c_{ij}}{m_i} \e^{-h_{ij}/\eps} \;, 
 \qquad
 i, j\in\cX\;, i\neq j\;,
\end{equation} 
where $\eps>0$, $c_{ij} = c_{ji} > 0$, $m_i > 0$ and $0< h_{ij} \leqs
+\infty$
(it will be convenient to write $h_{ij}=+\infty$ to indicate that $L_{ij}=0$). 
In addition, we assume that there exists a function $V:\cX\to \R_+$ such that 
$L$ is reversible with respect to the measure $m\e^{-V/\eps}$:
\begin{equation}
 \label{meta02}
 m_i\e^{-V_i/\eps}L_{ij} = m_j\e^{-V_j/\eps}L_{ji}
 \qquad
 \forall i, j\in\cX\;.
\end{equation} 
Since we assume $c_{ij} = c_{ji}$, this is equivalent to   
\begin{equation}
 \label{meta02a}
 V_i + h_{ij} = V_j + h_{ji} 
 \qquad
 \forall i, j\in\cX\;.
\end{equation} 
Our aim is to understand the behaviour as $\eps\to0$ of the Markov process
$X_t$ of generator $L$, when $L$ is invariant under a group $G$ of bijections 
$g:\cX\to\cX$. 

Let $\cG=(\cX,E)$ be the undirected graph with set of edges
$E=\setsuch{(i,j)\in\cX^2}{L_{ij}>0}$. It will be convenient to associate with
an edge $e=(i,j)\in E$ the \defwd{height of the saddle between $i$ and $j$}
defined by $V_e=V_i + h_{ij} = V_j + h_{ji}$, and to write 
$c_e = c_{ij} = c_{ji}$.
In particular, this convention justifies the graphical representation used
e.g.\ in \figref{fig:hierarchy} below, in which we draw a function $V(x)$ with
local minima at height $V_i$ and saddles at height $V_e$.

A widely used method for determining the logarithmic asymptotics of the
eigenvalues $\lambda_k$ of the generator $L$ relies on so-called
$W$-graphs~\cite{Ventcel_1972}. Given a subset $W\subset\cX$, a $W$-graph is a
directed graph with set of vertices $\cX$, such that every point
$i\in\cX\setminus W$ is the origin of exactly one path ending in a point $j\in
W$. Then one has 
\begin{equation}
 \label{eq:Arrhenius}
 H_k = -\lim_{\eps\to0} \eps\log(-\lambda_k) = V^{(k)} - V^{(k+1)}\;, 
\end{equation} 
where each $V^{(k)}$ involves a minimum over all $W$-graphs with $k$ elements. 
The quantity to be minimized is the sum of $h_{ij}$ over all edges of the
graph. This is the continuous-time analogue of~\cite[Theorem~7.3]{FW}, see for
instance~\cite[Section~4.3]{Cameron_Vanden-Eijnden_2014}. However, our aim is
to determine the Eyring--Kramers prefactor of the eigenvalues as well, that is,
to obtain the constants  
\begin{equation}
 \label{eq:EyringKramers}
 C_k = \lim_{\eps\to 0} (-\lambda_k) \e^{H_k/\eps}\;.
\end{equation} 
In the \lq\lq general position\rq\rq\ case, i.e.\ when the relevant potential
differences are all different, the $C_k$ can be determined from the
graph $\cG$ and the notions of communication height and metastable
hierarchy~\cite{BEGK,BGK}. We explain this approach in the next two subsections,
before turning to the more difficult case where the generator $L$ is invariant
under a symmetry group.


\subsection{Metastable hierarchy}
\label{ssec_asym}

\begin{figure}[t]
\begin{center}
\scalebox{1.0}{
{\raise 25mm \hbox{\begin{tikzpicture}[<->,>=stealth',main node/.style={circle,minimum
size=0.25cm,fill=blue!20,draw},x=2.5cm,y=2.5cm]

\node[main node] (2) at (0.0,0.0) {$2$};
\node[main node] (3) at (1.0,0.0) {$3$};
\node[main node] (1) at (2.0,0.0) {$1$};

\draw[-,black,thick] (2) -- (3);
\draw[-,black,thick] (3) -- (1);

\end{tikzpicture}}}
}
\hspace{14mm}
\scalebox{0.8}{
\begin{tikzpicture}[<->,>=stealth',main node/.style={circle,minimum
size=0.25cm,fill=blue!20,draw},x=1.2cm,y=0.8cm]

\draw[thick,-,smooth,domain=0.04:5.964,samples=75,/pgf/fpu,/pgf/fpu/output
format=fixed] plot (\x, {
(\x-1)*(\x-5)*(67*\x^4-789*\x^3+2996*\x^2-4044*\x+864)/720 });

\draw[blue,thick] (0.548,-1.61) -- node[right] {$h_{23}$} (0.548,2.16);
\draw[blue,thick] (1.75,0.943)  -- node[right] {$h_{32}$} (1.75,2.16);
\draw[blue,thick] (2.857,0.943) -- node[right] {$h_{31}$} (2.857,3.125);
\draw[blue,thick] (4.18,-1.61)  -- node[left] {$h_{231}$} (4.18,3.125);
\draw[blue,thick] (5.47,-2.2)  -- node[right] {$h_{13}$} (5.47,3.125);

\draw[-,blue,dashed] (0.548,2.16) -- (1.75,2.16);
\draw[-,blue,dashed] (0.548,-1.61) -- node[below] {$V_2$} (4.18,-1.61);
\draw[-,blue,dashed] (1.75,0.943) -- (2.857,0.943);
\draw[-,blue,dashed] (2.857,3.125) -- node[above] {$V_{(1,3)}$} (5.47,3.125);

\node[main node] at (0.548,-2.21) {$2$};
\node[main node] at (2.857,0.343) {$3$};
\node[main node] at (5.47,-2.848) {$1$};

\end{tikzpicture}
}
\end{center}
\vspace{-3mm}
\caption[]{The graph $\cG$ and potential associated with
Example~\ref{ex:threewell}. The communication height $H(2,1)$ from state 
$2$ to state $1$ is given by $h_{231} = V_{(1,3)} - V_2$.}
\label{fig:hierarchy} 
\end{figure}
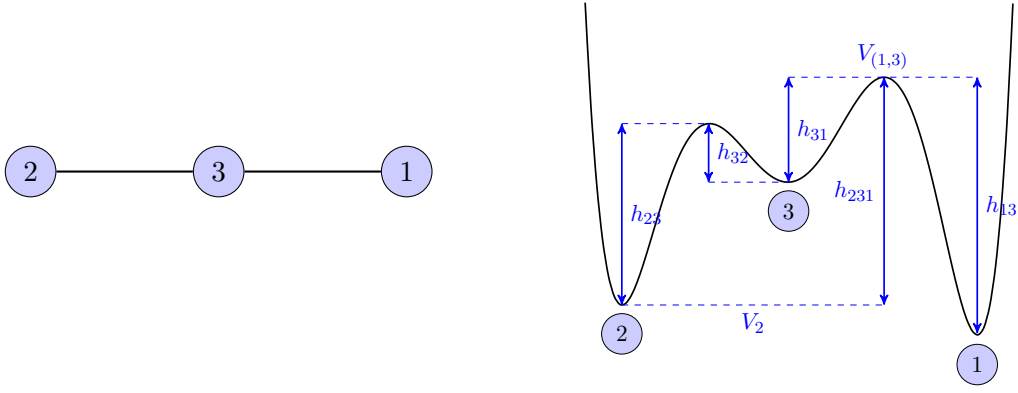

\begin{definition}[Communication heights]
\label{def_comm_heights} 
Let $i\neq j\in\cX$. For $p\geqs 1$, 
the \defwd{$(p+1)$-step communication height} from $i$ to $j$ is defined
inductively by
\begin{equation}
 \label{eq:asym01} 
h_{ik_1\dots k_pj} = h_{ik_1\dots k_p} \vee
(h_{ik_1}-h_{k_1i}+h_{k_1k_2}-h_{k_2k_1}+\dots+h_{k_pj})
\end{equation}
(see~\figref{fig:hierarchy}).
The \defwd{communication height} from $i$ to $j\neq i$ is defined by 
\begin{equation}
 \label{eq:asym02} 
 H(i,j) = \min_{\gamma:i\to j} h_\gamma\;,
\end{equation} 
where the minimum runs over all paths $\gamma=(i,k_1,\dots,k_p,j)$ of length
$p+1\geqs1$. Any such path realising the minimum in~\eqref{eq:asym02} is called
a \defwd{minimal path from $i$ to $j$}. 
If $i\not\in A\subset\cX$, we define the communication height from
$i$ to $A$ as  
\begin{equation}
 \label{eq:asym03} 
 H(i,A) = \min_{j\in A} H(i,j)\;.
\end{equation} 
\end{definition}

Using~\eqref{meta02a} and induction on $p$, it is straightforward to show that 
communication heights can be equivalently defined in terms of heights of
saddles, by 
\begin{equation}
 \label{eq:asym03a}
 h_{ik_1\dots k_pj} + V_i = V_{(i,k_1)} \vee V_{(k_1,k_2)} 
 \vee \dots \vee V_{(k_p,j)} \;.
\end{equation} 
Thus $H(i,j) + V_i$ is the minimum over all paths $\gamma$ from $i$ to $j$ of
the maximal saddle height encountered along $\gamma$. 

\begin{example}
\label{ex:threewell}
Consider the generator 
\begin{equation}
 \label{eq:asym03aa}
 L = 
 \begin{pmatrix}
 \matrixspace
  - \frac{c_{13}}{m_1}\e^{-h_{13}/\eps} 
  & 0 & \frac{c_{13}}{m_1}\e^{-h_{13}/\eps} \\
  \matrixspace
  0 & -\frac{c_{23}}{m_2}\e^{-h_{23}/\eps}
  & \frac{c_{23}}{m_2}\e^{-h_{23}/\eps} \\
  \matrixspace
  \frac{c_{13}}{m_3}\e^{-h_{31}/\eps} 
  & \frac{c_{23}}{m_3}\e^{-h_{32}/\eps} 
  & - \frac{c_{13}}{m_3}\e^{-h_{31}/\eps} - \frac{c_{23}}{m_3}\e^{-h_{32}/\eps}
 \end{pmatrix}
\end{equation}  
where we assume $h_{32} < h_{31}$, $h_{32} < h_{23}$ and $h_{23}-h_{32}+h_{31} <
h_{13}$. The associated graph and potential are shown in~\figref{fig:hierarchy}.
We have for instance 
\begin{equation}
 \label{eq:asym03b}
 h_{231} = h_{23} \vee (h_{23}-h_{32}+h_{31}) = h_{23}-h_{32}+h_{31} = 
 V_{(1,3)} - V_2\;,
\end{equation} 
which is the height difference between state $2$ and the saddle connecting
states $1$ and $3$. The communication height from $2$ to $1$ is given by
\begin{equation}
 \label{eq:asym03c}
 H(2,1) = h_{21} \wedge h_{231} = h_{231}\;,
\end{equation} 
because $h_{21}=+\infty$, and paths of length larger than $3$ have a larger
cost, as they contain several copies of at least one edge. Proceeding
similarly for the other communication
heights, we obtain that the matrix of $H(i,j)$ is given by 
\begin{equation}
 \label{eq:asym03d}
 \begin{pmatrix}
   * & h_{13} & h_{13} \\
   h_{231} & * & h_{23} \\
   h_{31} & h_{32} & *
 \end{pmatrix}\;.
\end{equation}
\end{example}

The following non-degeneracy assumption is the central condition for being in
what we will call the \emph{asymmetric case}. It is a weak form of similar
\lq\lq general position\rq\rq\ assumptions, found e.g.
in~\cite[Section~6.7]{FW}, \cite[p.~225]{Hwang_Scheu_92}
or~\cite[Assumption~1]{Cameron_2014a}.

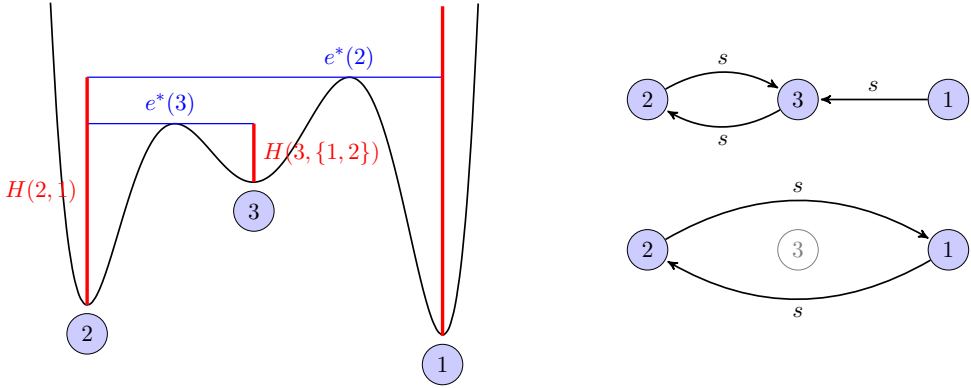
\begin{figure}[t]
\begin{center}
\scalebox{0.8}{
\begin{tikzpicture}[>=stealth',main node/.style={circle,minimum
size=0.25cm,fill=blue!20,draw},x=1.2cm,y=0.8cm]

\draw[thick,-,smooth,domain=0.04:5.964,samples=75,/pgf/fpu,/pgf/fpu/output
format=fixed] plot (\x, {
(\x-1)*(\x-5)*(67*\x^4-789*\x^3+2996*\x^2-4044*\x+864)/720 });

\draw[blue,semithick] (2.857,2.16) -- node[above] {$e^*(3)$} (0.548,2.16);
\draw[blue,semithick] (0.548,3.125) -- (5.452,3.125);
\node[blue] at (4.18,3.525) {$e^*(2)$};

\draw[red,ultra thick] (2.857,0.943) -- node[right] {$H(3,\set{1,2})$}
(2.857,2.16);
\draw[red,ultra thick] (0.548,-1.61) -- node[left] {$H(2,1)$} (0.548,3.125);
\draw[red,ultra thick] (5.47,-2.248) -- (5.47,4.6);

\node[main node] at (0.548,-2.21) {$2$};
\node[main node] at (2.857,0.343) {$3$};
\node[main node] at (5.47,-2.848) {$1$};

\end{tikzpicture}
}
\hspace{14mm}
\scalebox{0.8}{
{\raise 10mm \hbox{\begin{tikzpicture}[->,>=stealth',main node/.style={circle,minimum
size=0.25cm,fill=blue!20,draw},x=2.5cm,y=2.5cm]


\node[main node] (2) at (0.0,0.0) {$2$};
\node[main node] (3) at (1.0,0.0) {$3$};
\node[main node] (1) at (2.0,0.0) {$1$};

\draw[->,thick,shorten >=1pt] (1) -- node[above] {$s$} (3);
\draw[->,thick,shorten >=1pt] (3) edge[bend left] node[below] {$s$} (2);
\draw[->,thick,shorten >=1pt] (2) edge[bend left] node[above] {$s$} (3);

\node[main node] (22) at (0.0,-1.0) {$2$};
\node[circle,minimum size=0.25cm,black!50,draw] (32) at (1.0,-1.0)
{$3$};
\node[main node] (12) at (2.0,-1.0) {$1$};

\draw[->,thick,shorten >=1pt] (12) edge[bend left] node[below] {$s$} (22);
\draw[->,thick,shorten >=1pt] (22) edge[bend left] node[above] {$s$} (12);

\end{tikzpicture}}}
}
\end{center}
\vspace{-3mm}
\caption[]{Disconnectivity tree and metastable hierarchy for
Example~\ref{ex:threewell}. State $3$ is at the top of a $2$-cycle in the graph
of successors, and is removed after one step. State $2$ is at the top of a
$2$-cycle of the resulting graph of successors. The Arrhenius exponents of
eigenvalues are the lengths of thick vertical segments. The rightmost segment is
considered as infinitely long and corresponds to the eigenvalue $0$. The graphs
of successors are shown for the original system, and for the system in which
state $3$ has been removed.}
\label{fig:metastable_hierarchy} 
\end{figure}

\begin{assump}[Metastable hierarchy]
\label{assump_asym_meta} 
The elements of $\cX=\set{1,\dots,n}$ can be ordered in such a way that 
if $\cM_k=\set{1,\dots,k}$, 
\begin{equation}
 \label{eq:asym04}
 H(k,\cM_{k-1}) \leqs 
 \min_{i<k} H(i, \cM_k\setminus\set{i}) - \theta\;, 
 \qquad k=2,\dots,n
\end{equation} 
for some $\theta>0$. We say that the order $1\prec2\prec\dots\prec n$ defines
the \defwd{metastable hierarchy} of $\cX$
(see~\figref{fig:metastable_hierarchy}).
Furthermore, for each $k$ there is a unique edge $e^*(k)$ such that any minimal
path $\gamma: k\to\cM_{k-1}$ reaches height $H(k,\cM_{k-1}) + V_k$ only on the
edge $e^*(k)$. Finally, any non-minimal path $\gamma: k\to\cM_{k-1}$ reaches at
least height $V_{e^*(k)} + \theta$.
\end{assump}

Condition~\eqref{eq:asym04} means that in the lower-triangular part of the
matrix of communication heights $H(i,j)$, the minimum of each row is smaller, by
at least $\theta$, than the minimum of the row above. Such an ordering will
typically only exist if $L$ admits no nontrivial symmetry group. 

\begin{example}
Returning to the previous example, we see that $H(3,\set{1,2}) = h_{32}$ is
smaller than both $H(1,\set{2,3}) = h_{13}$ and $H(2,\set{1,3}) = h_{23}$.
Furthermore, $H(2,\set{1}) = h_{231}$ is smaller than $H(1,\set{2}) = h_{13}$.
Thus the system admits a metastable order given by $1 \prec 2 \prec 3$, where
$\theta$ is the minimum of $h_{13}-h_{32}$, $h_{23}-h_{32}$ and $h_{13} -
h_{213}$ (\figref{fig:metastable_hierarchy}). The associated highest edges are
$e^*(3)=(2,3)$ and $e^*(2)=(1,3)$.

Note that the relevant communication heights $H(k,\cM_{k-1})$ are
indeed given by the minima of the corresponding row in the subdiagonal part of
the matrix~\eqref{eq:asym03d}. See the next section for algorithms
allowing to determine the metastable order in an effective way. 
\end{example}

The following result is essentially equivalent to~\cite[Theorem~1.2]{BGK}, but
we will provide a new proof that will be needed for the symmetric case. 

\begin{theorem}[Asymptotic behaviour of eigenvalues]
\label{thm_asym} 
If Assumption~\ref{assump_asym_meta} holds,
then for sufficiently small $\eps$, the eigenvalues of $L$ are given by 
$\lambda_1=0$ and 
\begin{equation}
 \label{eq:asym05}
 \lambda_k = -\frac{c_{e^*(k)}}{m_k} \e^{-H(k,\cM_{k-1})/\eps}
 \bigbrak{1+\Order{\e^{-\theta/\eps}}}\;, 
 \qquad
 k=2,\dots,n\;.
\end{equation} 
Furthermore, let $\tau_{\cM_{k-1}} = \inf\setsuch{t>0}{X_t\in\cM_{k-1}}$ be
the first-hitting time of $\cM_{k-1}$. Then for
$k=2,\dots,n$,
\begin{equation}
 \label{eq:asym05b}
 \expecin{i}{\tau_{\cM_{k-1}}} =
\frac{1}{\abs{\lambda_k}}\bigbrak{1+\Order{\e^{-\theta/\eps}}}
\end{equation} 
holds for all initial values $i\in\cX\setminus\cM_{k-1}$.  
\end{theorem}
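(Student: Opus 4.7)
The plan is to establish both statements together using potential theory and the metastable hierarchy. Since $L$ is reversible with respect to $\pi_i=m_i\e^{-V_i/\eps}$ by~\eqref{meta02}, it is self-adjoint and negative semi-definite on $L^2(\pi)$, with Dirichlet form $\cE(f,f)=\frac12\sum_{i,j}\pi_iL_{ij}(f_i-f_j)^2$. Introduce the capacities $\mathrm{cap}(A,B)=\inf\setsuch{\cE(f,f)}{f|_A=1,\,f|_B=0}$ and the equilibrium potentials $h_{A,B}(i)=\probin{i}{\tau_A<\tau_B}$. The central identity from the Bovier--Eckhoff--Gayrard--Klein framework is
\begin{equation*}
\expecin{i}{\tau_{\cM_{k-1}}}=\frac{1}{\mathrm{cap}(\set{i},\cM_{k-1})}\sum_{j\not\in\cM_{k-1}}\pi_j \, h_{\set{i},\cM_{k-1}}(j),
\end{equation*}
which converts the hitting time into a ratio of two objects that can be estimated independently.

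The core estimate is
\begin{equation*}
\mathrm{cap}(\set{k},\cM_{k-1})=\pi_k \, \frac{c_{e^*(k)}}{m_k}\e^{-H(k,\cM_{k-1})/\eps}\bigbrak{1+\Order{\e^{-\theta/\eps}}}.
\end{equation*}
For the upper bound I would use a test function equal to $1$ on the valley of $k$, dropping to $0$ across the unique maximal edge $e^*(k)$ of any minimal path, and extended by $0$ on $\cM_{k-1}$; since $\pi_iL_{ij}=c_{ij}\e^{-V_{(i,j)}/\eps}$ by~\eqref{meta02}, the dominant contribution to $\cE(f,f)$ comes from the single edge $e^*(k)$, while Assumption~\ref{assump_asym_meta} guarantees that every other contributing edge carries a weight smaller by at least $\e^{-\theta/\eps}$. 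The matching lower bound comes from Thomson's flow principle, with a unit flow routed through $e^*(k)$ along a minimal path. I would then show that $h_{\set{k},\cM_{k-1}}$ is close to $1$ on the valley of $k$ and exponentially small on every other valley, whence $\sum_{j\not\in\cM_{k-1}}\pi_jh_{\set{k},\cM_{k-1}}(j)=\pi_k\bigbrak{1+\Order{\e^{-\theta/\eps}}}$, yielding~\eqref{eq:asym05b} for $i=k$. For general $i\not\in\cM_{k-1}$, a strong Markov argument handles the transient: with overwhelming probability the process reaches $\cM_k$ in a time negligible compared with $1/\abs{\lambda_k}$, after which the $i=k$ estimate applies.

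For the eigenvalues I proceed inductively along the hierarchy, building quasi-orthogonal test functions $\psi_1,\dots,\psi_n$ with $\psi_k$ obtained from $h_{\set{k},\cM_{k-1}}$ restricted to the valley of $k$. The Rayleigh quotient $\cE(\psi_k,\psi_k)/\pscal{\psi_k}{\psi_k}_\pi$ evaluates to $\mathrm{cap}(\set{k},\cM_{k-1})/\pi_k$ up to a factor $1+\Order{\e^{-\theta/\eps}}$, giving the upper bound on $\abs{\lambda_k}$ via Courant--Fischer applied to $\mathrm{span}(\psi_1,\dots,\psi_k)$. The matching lower bound follows from~\eqref{eq:asym05b} and the spectral representation of $\expecin{k}{\tau_{\cM_{k-1}}}$, in which the $k$-th small eigenvalue dominates. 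The main obstacle is the coupled control of equilibrium potentials and near-orthogonality of the $\psi_k$ across the hierarchy: one must ensure that errors do not accumulate multiplicatively over the $n$ induction steps, which rests crucially on the uniqueness of the edge $e^*(k)$ and on the uniform gap $\theta$ in Assumption~\ref{assump_asym_meta}. Arranging the argument so that it extends cleanly to symmetric generators — where metastable states come in group orbits and several eigenvalues may share the same Arrhenius exponent — is precisely what motivates the representation-theoretic reformulation used in the symmetric case.
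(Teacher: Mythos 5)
Your proposal is mathematically sound and, carried out in full, would indeed prove Theorem~\ref{thm_asym} --- but it follows the established Bovier--Eckhoff--Gayrard--Klein potential-theoretic route, whereas the paper deliberately gives a different, purely linear-algebraic proof. The paper performs an iterated Schur-complement block-triangularisation (Section~\ref{sec_proofT}): it constructs $S$ with $LS = ST$, where $T$ has an upper-left block $T^{11}$ that is again a generator and a lower-right scalar $\tilde a$ that reads off the eigenvalue associated with the last state in the hierarchy. The technical heart (Propositions~\ref{prop:S12}, \ref{prop:b1d1}, \ref{prop:b1d2} and Corollary~\ref{cor_b1d2}) is to show that the communication heights of $T^{11}$ agree with those of $L$ up to $\Order{\eps\e^{-\theta/\eps}}$, so the elimination can be iterated down the hierarchy, giving Theorem~\ref{thm_triang_d1}. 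The hitting-time statement then drops out of the \emph{same} decomposition via the linear-algebra identity $w_A=-L_{BB}^{-1}\vone$ (Section~\ref{sec_hit}), rather than the BEGK representation $\expecin{i}{\tau_A}=\mathrm{cap}(\set{i},A)^{-1}\sum_j\pi_j h_{\set{i},A}(j)$ you invoke.

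It is worth understanding why the authors redo a known result with a new method. Capacities, Thomson's principle, equilibrium potentials and the BEGK hitting-time identity all rely on $L$ being a (reversible) Markov generator, so that the objects involved have a probabilistic meaning. In the symmetric setting, however, the restriction $L^{(p)}$ of $L$ to a non-trivial isotypic component has nonzero row sums and is \emph{not} a generator; there is no underlying process, hence no Dirichlet/Thomson machinery. The Schur-complement argument, by contrast, is pure matrix algebra plus bookkeeping of exponential orders, and carries over to the blocks $L^{(p)}$ after adding a cemetery state --- this is exactly what Theorems~\ref{thm_dim1_nontrivial} and~\ref{thm_dimd} exploit. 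Your closing remark slightly misattributes the reason: the representation-theoretic decomposition identifies \emph{which} reduced matrices to analyse, but it is the triangularisation (not the representation theory) that replaces the potential-theoretic toolbox. Thus your argument is correct for the asymmetric theorem, but it would not generalise to non-trivial representations in the way the paper's proof does, which is precisely what the paper flags by saying it ``will provide a new proof that will be needed for the symmetric case.''
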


\begin{example}
The theorem shows that the eigenvalues of the generator~\eqref{eq:asym03aa}
satisfy 
\begin{align*}
\lambda_1 &= 0\;, \\
\lambda_2 &= -\frac{c_{13}}{m_2}
\e^{-h_{231}/\eps}\bigbrak{1+\Order{\e^{-\theta/\eps}}}\;, \\
\lambda_3 &= -\frac{c_{23}}{m_3}
\e^{-h_{32}/\eps}\bigbrak{1+\Order{\e^{-\theta/\eps}}}\;. 
\end{align*}
This can of course be checked by an explicit computation in this
simple case. The interest of Theorem~\ref{thm_asym} is that is works for
arbitrarily large systems, at a relatively modest computational cost.
\end{example}


\subsection{Computational cost of the algorithm}
\label{ssec_asymcost}

Let us denote by $n=\abs{\cX}$ the number of states, and by $m=\abs{E}$ the
number of edges of $\cG$.
Since we assume the chain to be irreducible, we obviously have $n-1 \leqs m
\leqs \frac12 n(n-1)$. Note that the number of
possible $W$-graphs on $\cX$ is at least $2^n$ (the number of subsets of $\cX$),
so that applying \eqref{eq:Arrhenius} directly to compute the exponents $H_k$
can be very time-consuming. However, in the reversible case the method can be
substantially improved.

\begin{figure}[t]
\begin{center}
\scalebox{0.8}{
\hbox{\begin{tikzpicture}[>=stealth',main node/.style={circle,minimum
size=0.25cm,fill=blue!20,draw},x=1.2cm,y=0.8cm]

\path[fill=black!20,-,smooth,domain=1.8:3.6,samples=25,/pgf/fpu,
/pgf/fpu/output format=fixed] 
plot (\x, {
(\x-1)*(\x-5)*(67*\x^4-789*\x^3+2996*\x^2-4044*\x+864)/720 }) --
(3.6,2.16) -- (1.8,2.16);

\draw[semithick] (1.8,2.16) -- (3.6,2.16);

\draw[thick,-,smooth,domain=0.04:5.964,samples=75,/pgf/fpu,/pgf/fpu/output
format=fixed] plot (\x, {
(\x-1)*(\x-5)*(67*\x^4-789*\x^3+2996*\x^2-4044*\x+864)/720 });

\node[main node] at (0.548,-2.21) {$2$};
\node[circle,minimum size=0.25cm,black!50,draw] at (2.857,0.343) {$3$};
\node[main node] at (5.47,-2.848) {$1$};

\end{tikzpicture}}
}
\hspace{14mm}
\scalebox{0.75}{
\begin{tikzpicture}[->,>=stealth',main node/.style={circle,minimum
size=0.25cm,fill=blue!20,draw},x=2.5cm,y=2.5cm]


\node[main node] (2) at (0.0,0.0) {$2$};
\node[main node] (3) at (1.0,0.0) {$3$};
\node[main node] (1) at (2.0,0.0) {$1$};

\draw[->,thick,shorten >=1pt] (1) edge[bend left] node[below] {$h_{13}$} (3);
\draw[->,thick,shorten >=1pt] (3) edge[bend left] node[above] {$h_{31}$} (1);
\draw[->,thick,shorten >=1pt] (3) edge[bend left] node[below] {$h_{32}$} (2);
\draw[->,thick,shorten >=1pt] (2) edge[bend left] node[above] {$h_{23}$} (3);

\node[main node] (21) at (0.0,-1.0) {$2$};
\node[main node] (31) at (1.0,-1.0) {$3$};
\node[main node] (11) at (2.0,-1.0) {$1$};

\draw[->,thick,shorten >=1pt] (11) edge[bend left] node[below] {$h_{13}$} (31);
\draw[->,thick,shorten >=1pt] (31) edge[bend left] node[above] 
{$h_{31}-h_{32}$} (11);
\draw[->,thick,shorten >=1pt] (31) edge[bend left] node[below] {$0$} (21);
\draw[->,thick,shorten >=1pt] (21) edge[bend left] node[above] {$h_{23}$} (31);

\node[main node] (22) at (0.0,-2.0) {$2$};
\node[circle,minimum size=0.25cm,black!50,draw] (32) at (1.0,-2.0)
{$3$};
\node[main node] (12) at (2.0,-2.0) {$1$};

\draw[->,thick,shorten >=1pt] (12) edge[bend left] node[below] {$h_{13}$} (22);
\draw[->,thick,shorten >=1pt] (22) edge[bend left] node[above] {$h_{231}$} (12);

\end{tikzpicture}
}
\end{center}
\vspace{-5mm}
\caption[]{The operation of removing one state from the system can be viewed
as filling its potential well, up to the height of the lowest reachable saddle.
On the directed graph with weights $h_{ij}$, this amounts to subtracting
$h_{i,s(i)}$ from all edges starting in the state $i$ to be removed, and
replacing any $h_{jk}$ for $j,k\neq i$ by $h_{jk}\wedge(h_{ji}+h_{ik})$.}
\label{fig:algorithm} 
\end{figure}
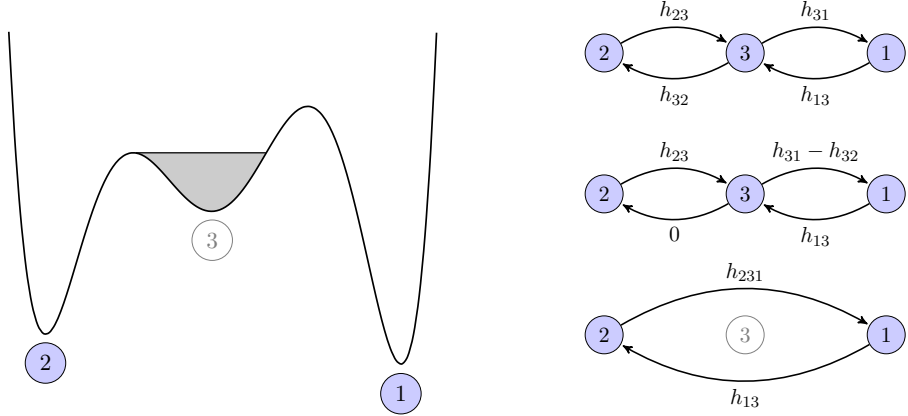

The basic steps of an algorithm determining the metastable hierarchy, and hence
both the prefactors $C_k$ and exponents $H_k$ of the eigenvalues, are as
follows~\cite[Section~4.3]{Cameron_Vanden-Eijnden_2014}:
\begin{itemiz}
\item 	Find the smallest ($1$-step) communication height $h_{ij}$. The state
$i$ will then be the last in the metastable hierarchy. 
\item 	Remove state $i$ and all edges containing $i$ from the graph;
recompute the $1$-step communication heights, in the sense that for each
$j,k\neq i$, $h_{jk}$ is replaced by its minimum with $h_{jik}$
(cf.\ \figref{fig:algorithm}).
\item 	Repeat until there are no edges left. 
\end{itemiz}

One way of doing this efficiently goes as follows. For any site $i\in\cX$, we
call \emph{successor} of $i$ any site $s(i)$ such that
\begin{equation}
 \label{eq:cos01}
 \inf_{j\neq i} h_{ij} = h_{is(i)}\;.
\end{equation} 
If a metastable hierarchy exists, then the last site in the hierarchy has a
unique successor. Define the graph of successors to be the oriented graph with
set of vertices $\cX$ and edges $i\to s(i)$
(\figref{fig:metastable_hierarchy}). Reversibility implies that this
graph cannot have cycles of length larger than $2$. Furthermore, there has to
be at least one cycle of length $2$. If $(i,j)$ is a $2$-cycle and $V_i < V_j$,
we say that $i$ is at the bottom of the cycle and $j$ is at the top of the
cycle. Our algorithm reads 
\begin{itemiz}
\item 	Determine the graph of successors. 
\item 	Find all cycles of length $2$. Determine the top of each cycle. 
Erase all sites which are at the top of a cycle and the corresponding edge, and
update the $h_{jk}$ as above.  
\item 	Repeat until there are no more edges.
\end{itemiz}
This algorithm yields the so-called \emph{disconnectivity tree}, which encodes
the metastable hierarchy. The leaves of the tree are the states in $\cX$. Two
branches join whenever they belong to a $2$-cycle at some step of the algorithm.
By plotting each leaf $i$ at height $V_i$ and joining top and bottom of a cycle
at height $V_{(i,s(i))}$, the Arrhenius exponents and the $e^*(k)$ determining
the prefactors $C_k$ can be read off the tree
(\figref{fig:metastable_hierarchy}).

Jacobi's eigenvalue algorithm allows to diagonalise a general symmetric matrix
in $\Order{n^3}$ steps~\cite{Rutishauser66}. The computational cost of the above
algorithm, by contrast, is between $\Order{m}$ and $\Order{n^2}$ at most.
Indeed, the graph of successors can be determined by finding, for each
$i\in\cX$, the minimum of the $h_{ij}$, which requires $2m$ steps. The cost of
determining all $2$-cycles is of order $n$ and thus negligible. Finally,
updating communication heights and graph of successors after removing one site
has a cost $\Order{n}$ at most, and has to be done for $n-1$ sites in total.
Since $2m<n^2$, the claim follows. 

An alternative algorithm for determining the metastable hierarchy is discussed
in~\cite{Cameron_2014a}. Though it has been derived in order to determine the
exponents $H_k$ only, Theorem~\ref{thm_asym} shows that it can be used to
compute the prefactors $C_k$ as well. The algorithm consists in first computing
the minimal spanning tree of the graph (minimal in terms of communication
heights), and then removing edges from the spanning tree. The computational cost
given in~\cite{Cameron_2014a} is at least $\Order{n\log n}$ and at most
$\Order{n^2}$. Hence the costs of both algorithms discussed here have comparable
bounds, though in specific situations one or the other algorithm may perform
substantially better.

\goodbreak


\subsection{Symmetry groups and their representations}
\label{ssec_sym}

Let $G$ be a finite group of bijections $g:\cX\to\cX$. We denote by $\pi(g)$
the permutation matrix 
\begin{equation}
 \label{eq:sym01}
 \pi(g)_{ab} = 
 \begin{cases}
 1 & \text{if $g(a)=b$\;,} \\
 0 & \text{otherwise\;.}
 \end{cases}
\end{equation} 
From now on we assume that the generator $L$ is \defwd{invariant} under $G$,
that is, 
\begin{equation}
 \label{eq:sym02}
 \pi(g) L = L \pi(g) 
 \quad
 \forall g\in G\;.
\end{equation} 
This is equivalent to assuming $L_{ab} = L_{g(a)g(b)}$ for all $a,b\in\cX$ and
all $g\in G$. 

Let us recall a few definitions from basic group theory. 

\begin{definition} \hfill
\begin{enum}
\item 	For $a\in\cX$, $O_a=\setsuch{g(a)}{g\in G} \subset \cX$ is called the
\defwd{orbit} of $a$. 
\item 	For $a\in\cX$, $G_a=\setsuch{g\in G}{g(a)=a} \subset G$ is called the
\defwd{stabiliser} of $a$.
\item 	For $g\in G$, $\cX^g=\setsuch{a\in\cX}{g(a)=a} \subset \cX$ is called
the \defwd{fixed-point set} of $g$. 
\end{enum}
\end{definition}

The following facts are well known:

\begin{itemiz}
\item 	The orbits form a partition of $\cX$, denoted $\cX/G$. 
\item 	For any $a\in\cX$, the stabiliser $G_a$ is a subgroup of $G$.
\item 	For any $a\in \cX$, the map
$\varphi:gG_a\mapsto g(a)$ provides a bijection from the set $G/G_a$ of left
cosets to the orbit $O_a$ of $a$, and thus $\abs{G}/\abs{G_a}=\abs{O_a}$. 
\item 	For any $g\in G$ and any $a\in\cX$, one has $G_{g(a)} = gG_ag^{-1}$,
i.e.\ stabilisers of a given orbit are conjugated.
\item 	\defwd{Burnside's lemma:} 
$\sum_{g\in G}\abs{\cX^g} = \abs{G}\abs{\cX/G}$. 
\end{itemiz}

We will denote the orbits of $G$ by $A_1, \dots, A_{n_G}$. The value of
the communication height $H(a,A_j)$ is the same for all $a\in A_i$, and we will
denote it $H(A_i,A_j)$. Similarly, we write $V_{A_i}$ for the common value of
all $V_a$, $a\in A_i$.
We shall make the following two non-degeneracy
assumptions:

\begin{assump}[Metastable order of orbits]
\label{assump_sym_meta} 
Let $\cM_k=A_1\cup\dots\cup A_k$. 
One can order the orbits in such a way that 
\begin{equation}
 \label{eq:sym03}
 H(A_k,\cM_{k-1}) \leqs 
 \min_{i<k} H(A_i, \cM_k\setminus A_i) - \theta\;, 
 \qquad k=2,\dots,n_G
\end{equation} 
for some $\theta>0$. We indicate this by writing $A_1\prec A_2\prec\dots\prec
A_{n_G}$. Furthermore, for each $k=2,\dots,n_G$, there is an edge $e^*(k)\in E$
such
that 
\begin{equation}
 \label{eq:sym03a}
 H(A_k,\cM_{k-1}) + V_{A_k} = V_{(a,b)}
 \quad\Leftrightarrow\quad
 \exists g\in G \colon (g(a),g(b)) = e^*(k)\;.
\end{equation} 
\end{assump}

\begin{assump}[Absence of accidental degeneracy]
\label{assump_sym_nondeg} 
Whenever there are elements $a_1$, $b_1$, $a_2$, $b_2\in \cX$ such that
$h_{a_1b_1}=h_{a_2b_2}$, there exists $g\in G$ such that
$g(\set{a_1,b_1})=\set{a_2,b_2}$. 
\end{assump}

We make the rather strong Assumption~\ref{assump_sym_nondeg} mainly to simplify
the expressions for eigenvalues; the approach we develop here can be applied
without this assumption, at the cost of more complicated expressions. 
Assumption~\ref{assump_sym_nondeg} implies the following property of the matrix
elements of $L$:

\begin{lemma}
\label{lem_GaGb} 
For all $a, b\in\cX$, belonging to different orbits, 
$L_{ah(b)} = L_{ab}$ if and only if $h\in G_aG_b$. 
\end{lemma}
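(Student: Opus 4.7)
The proof will naturally split into two implications. The easier ``if'' direction is a direct consequence of the invariance~\eqref{eq:sym02}, while the ``only if'' direction is where Assumption~\ref{assump_sym_nondeg} comes in.

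For the ``if'' direction, suppose $h = g_a g_b$ with $g_a \in G_a$ and $g_b \in G_b$. Then $h(b) = g_a(g_b(b)) = g_a(b)$, so
\begin{equation*}
 L_{a,h(b)} = L_{a,g_a(b)} = L_{g_a^{-1}(a),\, g_a^{-1}(g_a(b))} = L_{a,b}\;,
\end{equation*}
where the second equality applies~\eqref{eq:sym02} with $g = g_a^{-1}$, and the last uses $g_a^{-1}(a)=a$.

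For the ``only if'' direction, assume $L_{a,h(b)} = L_{a,b}$. Recalling $L_{ij} = (c_{ij}/m_i)\e^{-h_{ij}/\eps}$ with $m_a$ the same factor on both sides, and interpreting the identity as holding for all sufficiently small $\eps$, one extracts $h_{a,h(b)} = h_{ab}$ (the exponential rates must agree). Now Assumption~\ref{assump_sym_nondeg} applied to the pairs $(a,b)$ and $(a,h(b))$ yields some $g\in G$ with $g(\set{a,h(b)}) = \set{a,b}$. Since $g$ permutes the orbits of $G$ and since $a,b$ lie in distinct orbits, while $h(b)$ lies in the same orbit as $b$, the only possibility is $g(a) = a$ and $g(h(b))=b$. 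This means $g\in G_a$ and $gh\in G_b$, so that $h = g^{-1}(gh) \in G_a G_b$ as required.

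The main subtlety is the step that extracts the equality of Arrhenius exponents $h_{a,h(b)}=h_{ab}$ from the equality of generator entries, in particular in the possible edge case where both entries vanish (i.e.\ $h_{ab} = h_{a,h(b)} = +\infty$). In that case one can still invoke Assumption~\ref{assump_sym_nondeg} under the convention that $+\infty$ counts as a common value, or equivalently restrict the statement to pairs with $L_{ab}>0$; the rest of the argument is unchanged. The use of ``different orbits'' is essential in the final step to rule out the alternative $g(a)=b$ that would otherwise come out of the set equality $g(\set{a,h(b)})=\set{a,b}$.
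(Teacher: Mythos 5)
Your proof is correct and follows essentially the same route as the paper: the forward direction via the invariance relation $L_{g(a)g(b)}=L_{ab}$ and the converse via Assumption~\ref{assump_sym_nondeg} together with the observation that, since $a$ and $b$ lie in distinct orbits, the witnessing $g$ must fix $a$ and send $h(b)$ to $b$. The only additions beyond the paper's terser argument are your explicit extraction of $h_{a,h(b)}=h_{ab}$ from $L_{a,h(b)}=L_{ab}$ and your remark about the $h_{ij}=+\infty$ edge case, both of which are reasonable clarifications rather than departures from the method.
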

\begin{proof}
By Assumption~\ref{assump_sym_nondeg}, and since $g(a)\neq b$ as they belong to 
different orbits, $L_{ah(b)} = L_{ab}$ if and only if there is a $g\in G$
such that $g(a)=a$ and $g(b)=h(b)$. This is equivalent to the existence of a
$g\in G_a$ such that $g(b)=h(b)$, i.e.\ $b=g^{-1}h(b)$. This in turn is
equivalent to $h\in G_aG_b$.
\end{proof}

Direct transitions between two orbits $A_i$ and $A_j$ are dominated by those
edges $(a,b)$ for which $h_{ab}$ is minimal. We denote the minimal value 
\begin{equation}
 \label{eq:sym04}
 h^*(A_i,A_j) = \inf\setsuch{h_{ab}}{a\in A_i,b\in A_j}\;.
\end{equation} 
Note that $h^*(A_i,A_j)$ may be infinite (if there is no edge between the
orbits), and that $H(A_i,A_j)\leqs h^*(A_i,A_j)$. By decreasing, if necessary,
the value of $\theta>0$, we may assume that 
\begin{equation}
 \label{eq:sym04a}
 h_{ab} > h^*(A_i,A_j),\; a\in A_i,\; b\in A_j 
 \quad\Rightarrow\quad
 h_{ab} \geqs h^*(A_i,A_j) + \theta\;.
\end{equation} 

By Lemma~\ref{lem_GaGb}, if
$h^*(A_i,A_j)$ is finite, each $a\in A_i$ is connected to exactly
$\abs{G_aG_b}/\abs{G_b}$ states in $A_j\ni b$ with transition rate $L_{ab} =
[c_{ab}/m_a]\e^{-h^*(A_i,A_j)/\eps}$. Observe that
\begin{align}
\nonumber
 \varphi: G_aG_b/G_b & \to G_a/(G_a\cap G_b) \\
  gG_b &\mapsto g(G_a\cap G_b)
 \label{eq:sym04b}
\end{align} 
is a bijection, and therefore the number $n^a_j$ of states in $A_j$
communicating with $a$ can be written in either of the two equivalent forms   
\begin{equation}
 \label{eq:sym04c}
 n^a_j = 
\frac{\abs{G_aG_b}}{\abs{G_b}} = \frac{\abs{G_a}}{\abs{G_a\cap G_b}}\;. 
\end{equation}

The map $\pi$ defined by~\eqref{eq:sym01} is a morphism from $G$ to
$\GL(n,\C)$, and thus defines a \defwd{representation} of $G$ (of dimension
$\dim\pi = n$). 
In what follows, we will draw on some facts from representation theory of finite
groups (see for instance~\cite{Serre_groups}):
\begin{itemiz}
\item	 A representation of $G$ is
called \defwd{irreducible} if there is no proper subspace of $\C^n$ which is
invariant under all $\pi(g)$. 
\item 	Two representations $\pi$ and $\pi'$ of
dimension $d$ of $G$ are called \defwd{equivalent} if there exists a matrix 
$S\in\GL(d,\C)$ such that $S\pi(g)S^{-1}=\pi'(g)$ for all $g\in G$. 
\item 	Any finite group $G$ has only finitely many inequivalent irreducible
representations $\pi^{(0)}, \dots, \pi^{(r-1)}$. Here
$\pi^{(0)}$ denotes the \defwd{trivial representation}, $\pi^{(0)}(g)=1$
$\forall g\in G$. 
\item 	Any representation $\pi$ of $G$ can be decomposed into irreducible
representations:
\begin{equation}
 \label{eq:sym05}
 \pi = \bigoplus_{p=0}^{r-1} \alpha^{(p)} \pi^{(p)}\;, \qquad
 \alpha^{(p)} \geqs 0\;, \quad
 \sum_{p=0}^{r-1} \alpha^{(p)} \dim(\pi^{(p)}) = \dim(\pi) = n\;.
\end{equation} 
This means that we can find a matrix $S\in\GL(n,\C)$ such that all matrices
$S\pi(g)S^{-1}$ are block diagonal, with $\alpha^{(p)}$ blocks given by
$\pi^{(p)}(g)$. This decomposition is unique up to equivalence and the order of
factors. 
\item 	
For any irreducible representation $\pi^{(p)}$ contained in $\pi$, 
let $\chi^{(p)}(g) = \Tr \pi^{(p)}(g)$ denote its \defwd{characters}. Then 
\begin{equation}
 \label{eq:sym06}
 P^{(p)} = \frac{\dim (\pi^{(p)})}{\abs{G}} \sum_{g\in G} \cc{\chi^{(p)}(g)}
\pi(g) 
\end{equation} 
is the projector on the invariant subspace of $\C^n$ associated with
$\pi^{(p)}$. In particular, 
\begin{equation}
 \label{eq:sym07}
 \alpha^{(p)} \dim (\pi^{(p)}) = \Tr P^{(p)} = 
 \frac{\dim (\pi^{(p)})}{\abs{G}} \sum_{g\in G} \cc{\chi^{(p)}(g)} \chi(g)\;,
\end{equation} 
where $\chi(g) = \Tr \pi(g)$. Note that for the representation defined
by~\eqref{eq:sym01}, we have $\chi(g) = \abs{\cX^g}$. 
\end{itemiz}

\begin{example}[Irreducible representations of the dihedral group]
\label{ex:dihedral} 
The dihedral group $D_N$ is the group of symmetries of a regular $N$-gon. It is
generated by $r$, the rotation by $2\pi/N$, and $s$, one of the reflections
preserving the $N$-gon. In fact 
\begin{equation}
D_N=\bigset{\id,r,r^2,\dots,r^{N-1},s,rs,r^s,\dots,r^{N-1}s} 
\end{equation} 
is entirely specified by the conditions $r^N=\id$, $s^2=\id$ and $rs=sr^{-1}$. 
If $N$ is even, then $D_N$ has $4$ irreducible representations of dimension
$1$, specified by $\pi(r)=\pm1$ and $\pi(s)=\pm1$. In addition, it has
$\frac{N}{2}-1$ irreducible representations of dimension $2$, equivalent to 
\begin{equation}
 \label{eq:Dn_2dim}
 \pi(r) = 
 \begin{pmatrix}
 \e^{2\icx\pi k/N} & 0 \\ 0 & \e^{-2\icx\pi k/N}
 \end{pmatrix}\;,
 \qquad
 \pi(s) = 
 \begin{pmatrix}
 0 & 1 \\ 1 & 0
 \end{pmatrix}\;, 
 \qquad
 k=1,\dots\frac{N}{2}-1\;.
\end{equation} 
The associated characters are given by 
\begin{equation}
 \label{eq:Dn_characters}
 \chi(r^is^j) = \Tr\pi(r^is^j) 
 = 2 \cos \biggpar{\frac{2\pi ik}{N}} \delta_{j0}\;,
 \qquad
 i=0,\dots,N-1,\; j=0,1\;.
\end{equation} 
There are no irreducible representations of dimension larger than $2$. 
If $N$ is odd, there are $2$ irreducible representations of dimension $1$,
specified by $\pi(r)=1$ and $\pi(s)=\pm1$, and
$(N-1)/2$ irreducible representations of dimension $2$. 
\end{example}


\section{Results}
\label{sec_res}

The central idea of our approach is to use the decomposition~\eqref{eq:sym05} of
the permutation matrix representation~\eqref{eq:sym01} of the symmetry group $G$
to characterise the eigenvalues of $L$. It follows from~\eqref{eq:sym02}
and~\eqref{eq:sym06} that 
\begin{equation}
 \label{eq:res01}
 P^{(p)}L = LP^{(p)}\;, 
 \qquad
 p=0, \dots, r-1\;, 
\end{equation} 
so that the $r$ images $P^{(p)}\C^n$ (where $n=\abs{\cX}$) are invariant
subspaces for $L$. We can thus determine the eigenvalues of $L$ by restricting
the analysis to each restriction $L^{(p)}$ of $L$ to the subspace
$P^{(p)}\C^n$. All eigenvalues of $L$ are associated with exactly one
irreducible representation, so that the procedure will eventually yield the
whole spectrum of $L$. 

An equivalent way of stating this is that the projectors $P^{(p)}$ will allow
us to construct a basis in which $L$ is block-diagonal. Each block corresponds
to a different irreducible representation. After computing the matrix elements
of each block, the eigenvalues can be determined by adapting the algorithm
of the asymmetric case.  

\subsection{The trivial representation}

Let us start by the restriction $L^{(0)}$ of $L$ to the subspace $P^{(0)}\C^n$
associated with the trivial representation $\pi^{(0)}$.

\begin{prop}[Matrix elements of $L^{(0)}$ for the trivial 
representation]
\label{prop:L_trivial} 
The subspace $P^{(0)}\C^n$ has dimension $n_G$ and is spanned by the vectors
$u_i^{(0)} = \indicator{A_i}$, $i=1,\dots n_G$. 
The off-diagonal matrix element of $L^{(0)}$ for
transitions between the orbits $A_i$
and $A_j$ is given by 
\begin{equation}
 \label{eq:sym09}
 L^{(0)}_{ij} := 
 \frac{\pscal{u_i}{Lu_j}}{\pscal{u_i}{u_i}} = 
 \frac{c^*_{ij}}{m^*_i}
\e^{-h^*(A_i,A_j)/\eps} [1+\Order{\e^{-\theta/\eps}}]\;,
\end{equation} 
with  
\begin{equation}
 \label{eq:sym10}
 c^*_{ij} = \frac{c_{ab}}{\abs{G_{a}\cap G_{b}}}\;,
 \qquad
 {m^*_i} = \frac{m_a}{\abs{G_a}}\;,
\end{equation} 
where $a\in A_i$ and $b\in A_j$ are such that $h_{ab}=h^*(A_i,A_j)$.
Furthermore, $L^{(0)}$ is a generator, and thus its diagonal elements are given
by 
\begin{equation}
 \label{eq:sym10b}
 L^{(0)}_{ii} =: - \sum_{j\neq i} L^{(0)}_{ij}\;.
\end{equation} 
\end{prop}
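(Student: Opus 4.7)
The plan has three stages: identify the invariant subspace associated with $\pi^{(0)}$, reduce the matrix elements of $L$ restricted to it to a single row sum, and then extract its asymptotics.

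First I would apply the projector formula \eqref{eq:sym06} to the trivial representation. Since $\chi^{(0)}\equiv 1$ and $\dim\pi^{(0)}=1$, it collapses to $P^{(0)} = |G|^{-1}\sum_{g\in G}\pi(g)$, the average over the $G$-action. A vector $v\in\C^n$ lies in its range iff $\pi(g)v=v$ for every $g$, i.e.\ iff $v_a=v_{g(a)}$ for all $a\in\cX$ and $g\in G$; equivalently, $v$ is constant on every orbit. Hence the range equals $\mathrm{span}\{u_i^{(0)}:i=1,\dots,n_G\}$, of dimension $n_G$. (This may also be confirmed by combining \eqref{eq:sym07} with Burnside's lemma, which gives $\alpha^{(0)}\dim\pi^{(0)}=|G|^{-1}\sum_{g}|\cX^g|=n_G$.)

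Next, using the standard Euclidean inner product I would compute $\pscal{u_i^{(0)}}{u_i^{(0)}}=\abs{A_i}$ and $\pscal{u_i^{(0)}}{Lu_j^{(0)}}=\sum_{a\in A_i,\,b\in A_j}L_{ab}$. The $G$-invariance $L_{ab}=L_{g(a)g(b)}$ implies that for fixed $i,j$ the partial sum $\sum_{b\in A_j}L_{ab}$ is the same for every $a\in A_i$, so
\begin{equation*}
 L^{(0)}_{ij} \;=\; \frac{\pscal{u_i^{(0)}}{Lu_j^{(0)}}}{\pscal{u_i^{(0)}}{u_i^{(0)}}} \;=\; \sum_{b\in A_j} L_{ab}
\end{equation*}
for any chosen $a\in A_i$. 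This reduces the whole statement to estimating one row sum.

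The main technical step, for $i\neq j$, is to extract the leading order of $\sum_{b\in A_j}L_{ab}$. Fix $a\in A_i$ and $b^*\in A_j$ realising $h_{ab^*}=h^*(A_i,A_j)$. By \eqref{eq:sym04a}, every $b\in A_j$ with $h_{ab}>h^*(A_i,A_j)$ satisfies $h_{ab}\geqs h^*(A_i,A_j)+\theta$, hence contributes $\Order{\e^{-[h^*(A_i,A_j)+\theta]/\eps}}$; since $\abs{A_j}$ is finite these subdominant terms pool into an overall $\Order{\e^{-\theta/\eps}}$ correction to the leading exponential. It then remains to count the $b\in A_j$ with $L_{ab}=L_{ab^*}$: by Lemma~\ref{lem_GaGb} these are precisely the elements of the $G_a$-orbit of $b^*$, and by orbit--stabiliser its cardinality is $n^a_j=\abs{G_a}/\abs{G_a\cap G_{b^*}}$, in agreement with \eqref{eq:sym04c}. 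Multiplying by $L_{ab^*}=(c_{ab^*}/m_a)\e^{-h^*(A_i,A_j)/\eps}$ and regrouping as $c^*_{ij}/m^*_i=(c_{ab^*}/\abs{G_a\cap G_{b^*}})/(m_a/\abs{G_a})$ produces exactly \eqref{eq:sym09}--\eqref{eq:sym10}. The one delicate point, requiring Lemma~\ref{lem_GaGb}, is identifying the dominant $b$'s as a single $G_a$-orbit, so that the count is precisely $\abs{G_a}/\abs{G_a\cap G_{b^*}}$ rather than a larger number obtained by forgetting the $G_a\cap G_{b^*}$ overcounting.

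Finally, the generator property \eqref{eq:sym10b} is automatic. Zero row sums follow from $\sum_{j=1}^{n_G}L^{(0)}_{ij}=\sum_{b\in\cX}L_{ab}=0$, valid for any $a\in A_i$ since $L$ itself is a generator. Non-negativity of off-diagonal entries is inherited from $L$, because for $j\neq i$ and $a\in A_i$ the sum $\sum_{b\in A_j}L_{ab}$ involves only off-diagonal elements of $L$; hence $L^{(0)}$ is indeed a Markov generator on $\{A_1,\dots,A_{n_G}\}$.
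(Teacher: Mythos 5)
Your proof is correct, but it takes a genuinely more direct route than the paper. The paper does not prove Proposition~\ref{prop:L_trivial} in isolation: it sets up a uniform computation for an arbitrary irreducible representation $\pi^{(p)}$, using basis vectors $u^a = (\abs{G}/\abs{G_a})P_i e^a$ and the $G$-averaged inner product $\pscal{u}{v}=\abs{G}^{-1}\sum_{g\in G}\cc{u_{g(a)}}v_{g(a)}$, derives the general identity $\pscal{u^a}{Lv^b}/\pscal{u^a}{u^a}=(\alpha_i\abs{G_b})^{-1}\sum_{g\in G}\chi(g)L_{ag(b)}$, and only then specializes $\chi\equiv 1$, $\alpha_i=1$ to recover \eqref{eq:sym09}--\eqref{eq:sym10b}. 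You instead work directly with the Euclidean inner product (which on vectors supported on a single orbit differs from the paper's by a scalar factor, so the ratio $\pscal{u_i}{Lu_j}/\pscal{u_i}{u_i}$ is unaffected) and reduce everything to the row sum $\sum_{b\in A_j}L_{ab}$. This is shorter and more transparent for the statement at hand, at the price of not being reusable for the nontrivial representations; the paper's detour through the general inner product and projector algebra is what lets it obtain Propositions on all the $L^{(p)}$ from one master computation. The one nontrivial shared ingredient is identical in both: Lemma~\ref{lem_GaGb} combined with orbit--stabiliser to count the dominant terms as $\abs{G_a}/\abs{G_a\cap G_{b^*}}$ (equivalently $\abs{G_aG_{b^*}}/\abs{G_{b^*}}$, cf.~\eqref{eq:sym04b}--\eqref{eq:sym04c}), and the non-degeneracy in \eqref{eq:sym04a} to relegate the rest to $\Order{\e^{-\theta/\eps}}$. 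Your closing generator argument matches the paper's \eqref{eq:em11b}, with the useful added observation that non-negativity of $L^{(0)}_{ij}$, $i\neq j$, is inherited since the sum involves only off-diagonal entries of $L$.
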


The basis vectors $u_i$ are indicator functions on the orbits $A_i$. Thus 
if the initial distribution $\mu$ is uniform on each $A_i$, then it stays
uniform on each $A_i$ for all times. The process $X_t$ is then equivalent to the
process on $\set{1,\dots,n_G}$ with transition rates given by $L^{(0)}$.
Applying Theorem~\ref{thm_asym} on the asymmetric case to this process, which
is possible thanks to Assumption~\ref{assump_sym_meta}, we thus
obtain the following Kramers formula for the eigenvalues of $L^{(0)}$.

\begin{theorem}[Eigenvalues associated with the trivial representation]
\label{thm_dim1_trivial} 
If Assumptions \ref{assump_sym_meta} and~\ref{assump_sym_nondeg} hold true,
then for $\eps$ small enough, the spectrum of $L^{(0)}$ consists in 
$n_G$ eigenvalues of geometric multiplicity $1$, given by $\lambda^{(0)}_1=0$
and 
\begin{equation}
 \label{eq:sym08}
 \lambda^{(0)}_k = 
 \frac{c^*_{i(k)j(k)}}{m^*_k} \e^{-H(A_k,\cM_{k-1})/\eps}
 \bigbrak{1+\Order{\e^{-\theta/\eps}}}\;, 
 \qquad
 k=2,\dots,n_G\;,
\end{equation}
where $i(k)$ and $j(k)$ are such $e^*(k)=(a,b)$ with $a\in A_{i(k)}$ and
$b\in A_{j(k)}$ (cf~\eqref{eq:sym03a}). 
Furthermore, for $2\leqs k\leqs n_G$, let $\mu$ be a probability distribution
supported on $\cX\setminus\cM_{k-1}$ which is uniform on each $A_j$. Then 
\begin{equation}
 \label{eq:sym08a}
  \expecin{\mu}{\tau_{\cM_{k-1}}} =
\frac{1}{\abs{\lambda_k}}\bigbrak{1+\Order{\e^{-\theta/\eps}}}\;.
\end{equation} 
\end{theorem}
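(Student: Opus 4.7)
The strategy is to apply Theorem \ref{thm_asym} from the asymmetric case to the reduced Markov chain on $\set{1,\dots,n_G}$ with generator $L^{(0)}$, as suggested in the paragraph preceding the statement. Since Proposition \ref{prop:L_trivial} has already produced the matrix elements of $L^{(0)}$ in the required form \eqref{meta01}, the bulk of the plan consists in verifying that this reduced chain satisfies the hypotheses of Theorem \ref{thm_asym}, and then transferring the resulting eigenvalue and mean-hitting-time estimates back to the original process.

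First I would check that $L^{(0)}$ is a reversible generator with respect to the measure $m^*_i\e^{-V_{A_i}/\eps}$. This follows from the detailed balance \eqref{meta02} for $L$ combined with $G$-invariance (which makes $V_a$ and $m_a$ constant along orbits) and the symmetry $c^*_{ij}=c^*_{ji}$ evident from \eqref{eq:sym10}. Next I would establish that the communication height $\tilde H(i,j)$ in the reduced chain coincides with $H(A_i,A_j)$: using the maximal-saddle-height characterisation \eqref{eq:asym03a}, any path in $\cX$ projects to a path between orbits of no greater cost, and conversely any minimising path in the quotient can be lifted by choosing representatives realising $h_{ab}=h^*(A_i,A_j)$. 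Combined with Assumption \ref{assump_sym_meta}, this yields Assumption \ref{assump_asym_meta} for the reduced chain, with the same constant $\theta$, hierarchy $A_1\prec\dots\prec A_{n_G}$, and bottleneck edge given up to $G$-action by $e^*(k)$.

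Applying Theorem \ref{thm_asym} to $L^{(0)}$ then produces $n_G$ eigenvalues $\lambda^{(0)}_1=0,\lambda^{(0)}_2,\dots,\lambda^{(0)}_{n_G}$ with the Eyring--Kramers prefactors $c^*_{i(k)j(k)}/m^*_k$ and exponents $H(A_k,\cM_{k-1})$ prescribed by \eqref{eq:sym08}. Their asymptotic distinctness, guaranteed by \eqref{eq:sym03}, forces each to be simple for $\eps$ small enough, and hence of geometric multiplicity $1$. For the mean hitting-time estimate \eqref{eq:sym08a}, the key observation is that if $\mu$ is uniform on each orbit of its support, then $G$-invariance of $L$ preserves this property for all $t\geqs0$, so the orbit-label process $I_t$ defined by $X_t\in A_{I_t}$ is itself Markovian with generator $L^{(0)}$, and $\tau_{\cM_{k-1}}$ coincides with the hitting time of $\set{1,\dots,k-1}$ by $I_t$. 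The estimate \eqref{eq:sym08a} then follows at once from \eqref{eq:asym05b} applied to the reduced chain.

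The main point requiring care is the correspondence $\tilde H(i,j)=H(A_i,A_j)$ together with the unique-bottleneck-edge structure at the level of orbits. The uniqueness in \eqref{eq:sym03a} holds only modulo the $G$-action, and it is Assumption \ref{assump_sym_nondeg} (absence of accidental degeneracy) that prevents spurious coincidences of saddle heights across different $G$-orbits of edges, ensuring that the prefactor $c^*_{i(k)j(k)}$ is unambiguously defined by \eqref{eq:sym10}.
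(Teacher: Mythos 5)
Your proposal is correct and follows essentially the same route as the paper: compute the matrix elements of $L^{(0)}$ via Proposition~\ref{prop:L_trivial}, verify that the reduced chain on orbit labels satisfies Assumption~\ref{assump_asym_meta} (with the same hierarchy, $\theta$, and a unique bottleneck edge $(i(k),j(k))$ thanks to~\eqref{eq:sym03a}), apply Theorem~\ref{thm_asym}, and transfer the hitting-time estimate via the Markovianity of the orbit-label process $I_t$ for $G$-invariant initial laws. The paper states this reduction in the paragraph immediately preceding the theorem and completes it through Theorem~\ref{thm_triang_d1} and Section~\ref{sec_hit}; you have simply made explicit the verification steps that the paper leaves implicit (the lift/projection argument for $\tilde H(i,j)=H(A_i,A_j)$, and the role of Assumption~\ref{assump_sym_nondeg} in pinning down the prefactor).
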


The main difference between the Kramers formula~\eqref{eq:sym08} of the
symmetric case and its equivalent~\eqref{eq:asym05} for the asymmetric case is
that the eigenvalues are multiplied by an extra factor $\abs{G_c}/\abs{G_a\cap
G_b}$, where $c\in A_k$, $a\in A_{i(k)}$ and $b\in A_{j(k)}$, which accounts for
the symmetry. 

\subsection{Other irreducible representations of dimension $1$}

Theorem~\ref{thm_dim1_trivial} only accounts for a small subset of $n_G$
eigenvalues of the generator, associated with distributions that are uniform on
each orbit $A_i$. All other eigenvalues of $L$ will be associated to the rate
at which non-uniform initial distributions approach the uniform one. We first
determine eigenvalues associated with nontrivial representations of dimension
$1$, which are easier to obtain. The following lemma shows that given such a
representation, only part of the orbits may be present in the image of the
associated projector. 

\begin{lemma}
\label{lem_dim1} 
Let $\pi^{(p)}$ be an irreducible representation of dimension $1$ of $G$, 
let $A_i$ be an orbit of $G$ and fix any $a\in A_i$. Denote by $\pi_i(g)$ the
permutation induced by $g\in G$ on $A_i$ and let $\Pip$ be the associated
projector,
cf.~\eqref{eq:sym06}. Then one of two
following cases holds:
\begin{itemiz}
\item 	either $\pi^{(p)}(h)=1$ for all $h\in G_a$, and then  
$\Tr \Pip = 1$;
\item 	or $\sum_{h\in G_a} \pi^{(p)}(h) = 0$, and then 
$\Tr \Pip = 0$. 
\end{itemiz}
\end{lemma}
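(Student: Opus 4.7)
The plan is to compute $\Tr P_i^{(p)}$ directly from the projector formula~\eqref{eq:sym06}. Since $\dim\pi^{(p)}=1$, the character $\chi^{(p)}$ coincides with $\pi^{(p)}$ itself, and the trace of the permutation matrix $\pi_i(g)$ counts the number of points of $A_i$ fixed by $g$. Thus
\begin{equation*}
\Tr P_i^{(p)} = \frac{1}{\abs{G}} \sum_{g \in G} \cc{\pi^{(p)}(g)}\, \abs{A_i^g}\;.
\end{equation*}

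Next I would parametrise $A_i$ via the orbit--stabiliser bijection $hG_a\mapsto h(a)$ recalled in Section~\ref{ssec_sym}: the point $h(a)$ is $g$-fixed iff $h^{-1}gh\in G_a$. Swapping the order of summation and writing $g=hg'h^{-1}$ for $g'\in G_a$ in the inner sum, and using that a one-dimensional representation is a class function in the strong sense $\pi^{(p)}(hg'h^{-1})=\pi^{(p)}(g')$ (scalars commute with everything), the $\abs{G}/\abs{G_a}$ identical contributions, one per coset, collapse to
\begin{equation*}
\Tr P_i^{(p)} = \frac{1}{\abs{G_a}} \sum_{h \in G_a} \cc{\pi^{(p)}(h)}\;.
\end{equation*}

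Finally I would invoke the usual dichotomy for one-dimensional characters of the subgroup $G_a$. Either the restriction $\pi^{(p)}|_{G_a}$ is the trivial character, in which case the sum equals $\abs{G_a}$ and $\Tr P_i^{(p)}=1$; or some $h_0\in G_a$ satisfies $\pi^{(p)}(h_0)\neq 1$, and reindexing $h\mapsto h_0h$ gives $(\pi^{(p)}(h_0)-1)\sum_{h\in G_a}\pi^{(p)}(h)=0$, forcing $\sum_{h\in G_a}\pi^{(p)}(h)=0$ and hence $\Tr P_i^{(p)}=0$. No genuine obstacle is anticipated; the only point needing care is that the conjugation-invariance of $\pi^{(p)}$ is what lets the double sum collapse, and this is automatic in dimension one. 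Equivalently one could recognise $\pi_i$ as the induced representation $\mathrm{Ind}_{G_a}^G\mathbf{1}$ and read off the identity above from Frobenius reciprocity, but the direct computation keeps the argument elementary and self-contained.
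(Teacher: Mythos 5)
Your proof is correct and follows essentially the same route as the paper's: expand $\Tr P_i^{(p)}$ from the projector formula~\eqref{eq:sym06}, reduce the sum over $G$ to a sum over $G_a$, and apply the dichotomy for a one-dimensional character of the subgroup $G_a$. The paper obtains the key identity $\Tr P_i^{(p)}=\frac{1}{\abs{G_a}}\sum_{g\in G_a}\cc{\chi^{(p)}(g)}$ as the $d=1$ case of Lemma~\ref{lem_dimd}; in that proof the intermediate step $\Tr\pi_i(g)=\abs{A_i^g}=\abs{A_i}\,\indexfct{g\in G_a}$ is stated loosely (for a generic $g$, $\abs{A_i^g}$ can lie strictly between $0$ and $\abs{A_i}$), and what actually makes the formula come out right is the Burnside-type exchange of the double sum together with the conjugation-invariance of $\chi^{(p)}$, which equalises the contributions of the stabilisers $G_b$ over $b\in A_i$. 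You carry out exactly that exchange explicitly, via the orbit--stabiliser bijection and the scalar identity $\pi^{(p)}(hg'h^{-1})=\pi^{(p)}(g')$, so your treatment of this step is cleaner. Your closing shift $h\mapsto h_0h$ is the standard orthogonality trick, and the Frobenius-reciprocity remark for $\mathrm{Ind}_{G_a}^G\vone$ is apt. No gaps.
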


Let us call \defwd{active} (with respect to the representation $\pi^{(p)}$) 
the orbits $A_i$ such that $\Tr \Pip = 1$, and \defwd{inactive} the other
orbits. The restriction $L^{(p)}$ of $L$ to the subspace $P^{(p)}\C^n$ has 
dimension equal to the number of active orbits, and the following
result describes its matrix elements.

\begin{prop}[Matrix elements 
for an irreducible representation 
of dimension $1$] 
For each orbit $A_i$ fix an $a_i\in A_i$. 
The subspace $P^{(p)}\C^n$ is spanned by the vectors 
$(\uip{i})_{A_i\;\rm{ active}}$ with components 
\begin{equation}
 \label{sym:10} 
 (\uip{i})_a = 
 \begin{cases}
  \cc{\pi^{(p)}(h)} & \text{if $a=h(a_i)\in A_i$\;,} \\
  0 & \text{otherwise\;.}
 \end{cases}
\end{equation} 
The off-diagonal matrix elements of $L^{(p)}$ between two active orbits $A_i$
and $A_j$ are again given by 
\begin{equation}
 \label{eq:sym11}
 L^{(p)}_{ij} = 
 \frac{\pscal{\uip{i}}{L\uip{j}}}{\pscal{\uip{i}}{\uip{i}}} = L^{(0)}_{ij} = 
 \frac{c^*_{ij}}{m^*_i} \e^{-h^*(A_i,A_j)/\eps} [1+\Order{\e^{-\theta/\eps}}]
 \;. 
\end{equation}
The diagonal elements of $L^{(p)}$ are given by 
\begin{equation}
 \label{eq:sym13}
 L^{(p)}_{ii} =  L^{(0)}_{ii} - \sum_{gG_{a_i}\in G/G_{a_i} \setminus G_{a_i}}
(1-\pi^{(p)}(g))L_{a_ig(a_i)}\;.
\end{equation} 
\end{prop}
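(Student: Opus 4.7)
The plan is to verify the three assertions in turn, drawing on the character-projection identity \eqref{eq:sym06} and the single-orbit structure from Assumption~\ref{assump_sym_nondeg}.

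First, for the basis. The definition of $(\uip{i})_a$ is unambiguous because activity of $A_i$ means $\pi^{(p)}\equiv 1$ on $G_{a_i}$ by Lemma~\ref{lem_dim1}, so any two choices of $h$ with $h(a_i)=a$ give the same value. A short calculation shows that $\uip{i}$ transforms as a character under $\pi$, so feeding $\uip{i}$ into \eqref{eq:sym06} yields $P^{(p)}\uip{i}=\uip{i}$. The $\uip{i}$ for different active orbits have disjoint supports, hence are mutually orthogonal, and $\pscal{\uip{i}}{\uip{i}}=\abs{A_i}$ since $\abs{\pi^{(p)}}\equiv 1$ on a finite group. Finally, $\dim P^{(p)}\C^n=\Tr P^{(p)}=\sum_i\Tr\Pip$ equals the number of active orbits by Lemma~\ref{lem_dim1}, matching the count of $\uip{i}$, so they form a basis.

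Second, for the off-diagonal elements $i\neq j$, expand
\begin{equation*}
\pscal{\uip{i}}{L\uip{j}} = \sum_{a\in A_i,\,b\in A_j} \pi^{(p)}(h_a)\,L_{ab}\,\cc{\pi^{(p)}(h_b)}\;.
\end{equation*}
By \eqref{eq:sym04a}, only edges with $h_{ab}=h^*(A_i,A_j)$ contribute to leading order, with relative error $\Order{\e^{-\theta/\eps}}$. The decisive input is Assumption~\ref{assump_sym_nondeg}: all such minimal edges form a single $G$-orbit, so each can be written as $(g(a_i),g(b_0))$ for a fixed reference edge $(a_i,b_0)$ and $g$ ranging over $G/(G_{a_i}\cap G_{b_0})$. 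Because $\pi^{(p)}$ is trivial on both $G_{a_i}$ and $G_{a_j}$, the phase factor $\pi^{(p)}(h_{g(a_i)})\cc{\pi^{(p)}(h_{g(b_0)})}$ collapses to the $g$-independent constant $\cc{\pi^{(p)}(h_{b_0})}$. Summing $\abs{G}/\abs{G_{a_i}\cap G_{b_0}}$ equal contributions and dividing by $\pscal{\uip{i}}{\uip{i}}=\abs{A_i}=\abs{G}/\abs{G_{a_i}}$ reproduces the ratio $c^*_{ij}/m^*_i$ from Proposition~\ref{prop:L_trivial}, with any residual unit-modulus phase absorbed by rescaling $\uip{j}$ (or by choosing $a_j$ to be a minimal neighbour of $a_i$ whenever possible), which does not affect the eigenvalues of $L^{(p)}$. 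This phase-cancellation step is the main obstacle: without Assumption~\ref{assump_sym_nondeg} the minimal edges could split into several $G$-orbits with incompatible phases, and the clean identity $L^{(p)}_{ij}=L^{(0)}_{ij}$ would fail.

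Third, for the diagonal elements, $G$-invariance of $L$ together with the character transformation law of $\uip{i}$ imply that $\pi^{(p)}(h_a)(L\uip{i})_a$ is constant in $a\in A_i$, so $L^{(p)}_{ii}$ equals this value at $a=a_i$. The on-site term is $L_{a_ia_i}$, while parameterising $b\in A_i\setminus\set{a_i}$ as $b=g(a_i)$ with $gG_{a_i}\in G/G_{a_i}\setminus\set{G_{a_i}}$ produces the character-weighted sum $\sum L_{a_ig(a_i)}\cc{\pi^{(p)}(g)}$. Writing $L_{a_ia_i}=-\sum_{c\neq a_i}L_{a_ic}$ and splitting according to whether $c\in A_i$ or $c\notin A_i$, the off-orbit piece reproduces $L^{(0)}_{ii}$ by Proposition~\ref{prop:L_trivial}, while the on-orbit piece combines with the character sum to yield the claimed correction~\eqref{eq:sym13}.
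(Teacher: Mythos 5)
Your proof is correct, and the strategy is essentially the one the paper follows: identify the basis vectors via the projector, use Assumption~\ref{assump_sym_nondeg} to reduce the double sum over edges to a single $G$-orbit of minimal edges, and split the diagonal element via the zero-row-sum identity. The main presentational difference is that the paper derives formulae for a general irreducible representation of dimension $d\geqs1$ in Section~\ref{sec_proofG} (passing through \eqref{eq:em05}, the projector identity \eqref{eq:em06}, and \eqref{eq:em07}--\eqref{eq:em10}) and only at the end specializes to $d=1$; you argue directly in the $d=1$ case, which makes the phase cancellations more visible but forgoes reuse for Proposition~\ref{prop_matrix_Lp}. The paper also routes the ``single orbit of minimal edges'' step through Lemma~\ref{lem_GaGb}, restricting to $g\in G_aG_b$, while you invoke Assumption~\ref{assump_sym_nondeg} directly; these are equivalent. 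Two small remarks. First, you are right that a residual unit-modulus constant $\cc{\pi^{(p)}(h_{b_0})}$ survives for arbitrary choices of $a_i$; the paper's \eqref{eq:em08}, specialised to $d=1$ with both orbits active, carries exactly the same factor $\pi^{(p)}(h_1)\cc{\pi^{(p)}(h_2)}$, so the literal identity $L^{(p)}_{ij}=L^{(0)}_{ij}$ in \eqref{eq:sym11} presupposes choosing representatives linked by a minimal edge (or a subsequent diagonal rescaling), as you note; this does not affect the spectrum, which is all that is used later. Second, your diagonal computation produces $\sum_k(1-\cc{\pi^{(p)}(k)})L_{a_ik(a_i)}$ rather than the $\sum_k(1-\pi^{(p)}(k))L_{a_ik(a_i)}$ of \eqref{eq:sym13}; the two agree because the involution $k\mapsto k^{-1}$ on $G/G_{a_i}\setminus G_{a_i}$ fixes $L_{a_ik(a_i)}$ (by $G$-invariance and reversibility), but it is worth spelling out since without it the formula looks off by a complex conjugate.
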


Using Assumption~\ref{assump_sym_nondeg}, we can obtain a more explicit
expression for the diagonal matrix elements. For each orbit $A_i$, we can define
a unique \defwd{successor} $s(i)$, which labels the orbit which is easiest to
reach in one step from $A_i$: 
\begin{equation}
 \label{eq:sym10c}
 \inf_{j\neq i} h^*(A_i,A_j) = h^*(A_i,A_{s(i)})\;.
\end{equation} 
As a consequence of~\eqref{eq:sym10b}, we have 
\begin{equation}
 \label{eq:sym10d}
 L^{(0)}_{ii} = -\frac{c^*_{is(i)}}{m^*_i}
\e^{-h^*(A_i,A_{s(i)})/\eps}[1 + \Order{\e^{-\theta/\eps}}]\;.
\end{equation}
There are different cases to be considered, depending on whether it is easier,
starting from $a\in A_i$, to reach states outside $A_i$ or in $A_i\setminus a$.
Let $a^*$ be such that $h(a,a^*)=\inf_b h(a,b)$. Then 
\begin{equation}
 \label{eq:sym14}
 L^{(p)}_{ii} = 
 \begin{cases}
 L^{(0)}_{ii} [1 + \Order{\e^{-\theta/\eps}}]
 & \text{if $a^* \notin A_i$\;,} \\
 - 2 \brak{1-\re \pi^{(p)}(k)} L_{aa^*} [1 +
\Order{\e^{-\theta/\eps}}] 
 & \text{if $a^*=k(a)\in A_i$ and $k\neq k^{-1}$\;,} \\
 - \brak{1-\pi^{(p)}(k)} L_{aa^*} [1 + \Order{\e^{-\theta/\eps}}] 
 & \text{if $a^*=k(a)\in A_i$ and $k= k^{-1}$\;.}
 \end{cases}
\end{equation} 

Relation~\eqref{eq:sym13} and the fact that not all orbits are active for a
nontrivial representation imply that the matrix $L^{(p)}$ is not a generator if
$p\neq0$. We can however add a cemetery state to the set of active orbits, and
thus associate to $L^{(p)}$ a Markovian jump process on the augmented space. The
cemetery state is absorbing, which reflects the fact that all nonzero initial
conditions in $P^{(p)}\C^n$ are asymmetric and will converge to the symmetric
invariant distribution. 

\begin{theorem}[Eigenvalues associated with nontrivial irreducible
representations of dimension $1$]
\label{thm_dim1_nontrivial} 
Let $\pi^{(p)}$ be a nontrivial irreducible representation of $G$ of dimension
$1$, and let $n_p$ be the number of active orbits associated with $\pi^{(p)}$.
For sufficiently small $\eps$, the spectrum of $L^{(p)}$ consists in 
$n_p$ eigenvalues of geometric multiplicity $1$. They can be determined by
applying Theorem~\ref{thm_asym} to the augmented process defined by $L^{(p)}$,
and ignoring the eigenvalue~$0$. 
\end{theorem}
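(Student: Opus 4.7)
The plan is to reduce to the asymmetric case of Theorem~\ref{thm_asym} by augmenting the state space with a cemetery. Concretely, I would introduce the $(n_p+1)\times(n_p+1)$ matrix $\tilde L^{(p)}$ on $\{1,\dots,n_p\}\cup\{\partial\}$ defined by $\tilde L^{(p)}_{ij} = L^{(p)}_{ij}$ and $\tilde L^{(p)}_{ii} = L^{(p)}_{ii}$ for active orbits $i\neq j$, a cemetery row identically zero, and the leakage
\[
\tilde L^{(p)}_{i\partial} = -L^{(p)}_{ii} - \sum_{\substack{j \text{ active}\\ j\neq i}} L^{(p)}_{ij}
\]
forced by the zero row-sum condition. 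Combining~\eqref{eq:sym11}, \eqref{eq:sym13}, and $L^{(0)}\vone = 0$, a short calculation identifies $\tilde L^{(p)}_{i\partial}$ with the sum of two nonnegative contributions: the intra-orbit term $\sum_{gG_{a_i}\in G/G_{a_i}\setminus G_{a_i}}(1-\pi^{(p)}(g))L_{a_ig(a_i)}$ coming from~\eqref{eq:sym13} and the inter-orbit rates $\sum_{j\text{ inactive}}L^{(0)}_{ij}$ towards orbits killed by $P^{(p)}$. For real-valued $\pi^{(p)}$ this makes $\tilde L^{(p)}$ a genuine Markov generator; the (purely) complex case can either be handled by working with the real representation $\pi^{(p)}\oplus\overline{\pi^{(p)}}$, or purely at the level of characteristic polynomials.

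Second, I would use the block-triangular structure
\[
\tilde L^{(p)} = \begin{pmatrix} L^{(p)} & \vec\ell \\ 0 & 0 \end{pmatrix},
\]
which immediately gives $\det(\lambda I - \tilde L^{(p)}) = \lambda\,\det(\lambda I - L^{(p)})$. Hence $\mathrm{spec}(\tilde L^{(p)}) = \{0\}\cup\mathrm{spec}(L^{(p)})$, with the zero eigenvalue carried by $\indicator{\partial}$ and the remaining $n_p$ eigenvalues of $\tilde L^{(p)}$ coinciding with those of $L^{(p)}$.

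Third, I would verify that $\tilde L^{(p)}$ satisfies Assumption~\ref{assump_asym_meta}. Communication heights between active orbits coincide via~\eqref{eq:sym11} with the heights appearing in Assumption~\ref{assump_sym_meta}; the new edges $i\to\partial$ carry Arrhenius exponents equal to the intra-orbit saddle heights that can be read off from~\eqref{eq:sym14}. Placing $\partial$ first in the hierarchy and listing the active orbits in the order induced by~\eqref{eq:sym03} then produces a valid metastable ordering: the inter-orbit $\theta$-gaps are guaranteed by Assumption~\ref{assump_sym_meta}, while Assumption~\ref{assump_sym_nondeg} prevents any accidental collision between inter-orbit exponents and the intra-orbit exponents entering through $\partial$. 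Applying Theorem~\ref{thm_asym} to $\tilde L^{(p)}$ then yields $n_p+1$ eigenvalues with pairwise distinct Arrhenius exponents, hence each of geometric multiplicity one, and discarding $\tilde\lambda_1=0$ identifies the full spectrum of $L^{(p)}$.

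The main obstacle, I expect, is precisely the verification in the third step. The intra-orbit heights contributing to $\tilde L^{(p)}_{i\partial}$ may be strictly smaller than any inter-orbit saddle height, so the cemetery edges are not merely a formal device but can dominate the leading Arrhenius exponents of individual orbits, reordering the hierarchy non-trivially. Walking through the three cases of~\eqref{eq:sym14} and showing that the resulting augmented ordering still satisfies~\eqref{eq:asym04} with a (possibly reduced) positive gap $\theta'$ is where the most delicate bookkeeping is needed.
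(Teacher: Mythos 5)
Your plan follows the same overall route as the paper: augment $L^{(p)}$ with an absorbing cemetery state, note that the spectrum of the augmented matrix is $\{0\}\cup\mathrm{spec}(L^{(p)})$, and invoke Theorem~\ref{thm_asym}. The genuine gap lies in the final invocation. Theorem~\ref{thm_asym} is stated in the setting of Section~2.1, which assumes the chain is irreducible and reversible with respect to $m\,\e^{-V/\eps}$, i.e.\ $V_i+h_{ij}=V_j+h_{ji}$ for all $i,j$. The augmented generator $\tilde L^{(p)}$ violates this: its cemetery row is identically zero, so $h_{\partial j}=+\infty$ while $h_{j\partial}<\infty$ for at least one active orbit $j$, and detailed balance fails. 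Hence verifying only Assumption~\ref{assump_asym_meta}, as you propose, does not license an application of Theorem~\ref{thm_asym} as a black box. The paper resolves exactly this by reproving the triangularisation result (Theorem~\ref{thm_triang_d1}) under the strictly weaker ``reversibility condition for minimal paths'' \eqref{eq:bti01}, which is designed to tolerate an absorbing site $1$ (``we do not assume reversibility for site $1$, since this will allow us to cover situations associated with nontrivial representations''); the present theorem is then a corollary of that extended result, as recorded at the ends of Sections~6 and~7. Your plan would need either to establish the same relaxation, or to argue explicitly that minimal paths between active orbits never touch the cemetery so that \eqref{eq:bti01} holds on $\{2,\dots,n\}$; without this, the third step stalls.

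Two smaller points. The complex-representation worry is moot: $L$ is similar to a symmetric matrix, so all eigenvalues of $L$, and in particular of each $L^{(p)}$, are real; moreover for the groups $D_N\times\Z_2$ appearing in the paper every one-dimensional irreducible representation is real-valued. Your concern that the cemetery edges $i\to\partial$ might reshuffle the metastable ordering is legitimate but not an obstacle: the triangularisation of Section~6 applies to whatever hierarchy the augmented process actually has, and Assumptions~\ref{assump_sym_meta} and~\ref{assump_sym_nondeg} guarantee a strict $\theta$-gap between the relevant exponents at each level.
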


\subsection{Irreducible representations of dimension larger than $1$}

We finally turn to the computation of eigenvalues associated with irreducible
representations of higher dimension, which is more involved. The following
lemma is an analogue of Lemma~\ref{lem_dim1}, specifying which orbits will
appear in the image of the projector associated with a given representation. 

\begin{lemma}
\label{lem_dimd} 
Let $\pi^{(p)}$ be an irreducible representation of $G$ of dimension $d\geqs2$, 
and let $A_i$ be an orbit of $G$. Denote by $\pi_i(g)$ the permutation induced
by $g\in G$ on $A_i$, and let $\Pip$ be the associated projector,
cf.~\eqref{eq:sym06}. Then for arbitrary $a\in A_i$,
\begin{equation}
 \label{eq:sym21}
 \Tr(\Pip) = d\alpha_i^{(p)}\;, \qquad 
 \alpha_i^{(p)} = 
 \frac{1}{\abs{G_a}} \sum_{h\in G_a} \chi^{(p)}(h) 
 \in \set{0,1,\dots,d}\;.
\end{equation} 
Here $\chi^{(p)}(h) = \Tr \pi^{(p)}(h)$ denotes the characters of the
irreducible representation. 
\end{lemma}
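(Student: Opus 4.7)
The plan is to expand $\Tr(\Pip)$ using the formula~\eqref{eq:sym06} applied to the permutation representation $\pi_i$ of $G$ acting on $A_i$, and then reduce the double sum by the orbit-stabiliser correspondence.

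First, I would write
\begin{equation*}
\Tr(\Pip) = \frac{d}{\abs{G}}\sum_{g\in G}\cc{\chi^{(p)}(g)}\,\Tr\pi_i(g)
= \frac{d}{\abs{G}}\sum_{g\in G}\cc{\chi^{(p)}(g)}\bigabs{A_i^g},
\end{equation*}
since the trace of a permutation matrix counts its fixed points. Then swap the order of summation:
\begin{equation*}
\sum_{g\in G}\cc{\chi^{(p)}(g)}\bigabs{A_i^g}
= \sum_{b\in A_i}\sum_{g\in G_b}\cc{\chi^{(p)}(g)}.
\end{equation*}
Within the orbit $A_i$, stabilisers are conjugate (if $b=k(a)$ then $G_b=kG_ak^{-1}$), and since $\chi^{(p)}$ is a class function, the inner sum is independent of $b\in A_i$ and equals $\sum_{h\in G_a}\cc{\chi^{(p)}(h)}$. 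Combined with $\abs{A_i}=\abs{G}/\abs{G_a}$ this yields
\begin{equation*}
\Tr(\Pip) = \frac{d}{\abs{G_a}}\sum_{h\in G_a}\cc{\chi^{(p)}(h)}.
\end{equation*}

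To replace $\cc{\chi^{(p)}}$ by $\chi^{(p)}$, I would use that inversion is a bijection of $G_a$ and that $\chi^{(p)}(h^{-1})=\cc{\chi^{(p)}(h)}$, so $\sum_{h\in G_a}\cc{\chi^{(p)}(h)}=\sum_{h\in G_a}\chi^{(p)}(h)$. This gives exactly $\Tr(\Pip) = d\alpha_i^{(p)}$ with $\alpha_i^{(p)}$ as in the statement.

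It remains to argue $\alpha_i^{(p)}\in\set{0,1,\dots,d}$. The identity
\begin{equation*}
\alpha_i^{(p)} = \Tr\biggpar{\frac{1}{\abs{G_a}}\sum_{h\in G_a}\pi^{(p)}(h)}
\end{equation*}
shows that $\alpha_i^{(p)}$ is the trace of the operator $Q=\frac{1}{\abs{G_a}}\sum_{h\in G_a}\pi^{(p)}(h)$. A direct check using the group law shows $Q^2=Q$, so $Q$ is the projector onto the subspace of $G_a$-invariant vectors inside the representation space of $\pi^{(p)}$; its trace is thus a non-negative integer bounded above by $\dim\pi^{(p)}=d$. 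There is no real obstacle here — the only subtle step is recognising that the conjugacy of stabilisers within an orbit is exactly what allows the class-function property of $\chi^{(p)}$ to collapse the sum over $b\in A_i$ into a single multiplicative factor.
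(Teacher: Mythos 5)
Your proof is correct, and it is worth noting that it is actually \emph{more careful} than the paper's. The paper's proof asserts the pointwise identity $\Tr(\pi_i(g)) = \abs{A_i^g} = \abs{A_i}\indicator{\set{g\in G_a}}$, which is false in general: for instance, in $D_4$ acting on the four vertices of a square, a reflection $s$ through vertex $a=1$ fixes two vertices, not $\abs{A_i}=4$. What \emph{is} true is that the weighted sum $\sum_{g\in G}\cc{\chi^{(p)}(g)}\abs{A_i^g}$ equals $\abs{A_i}\sum_{h\in G_a}\cc{\chi^{(p)}(h)}$, and your double-counting step, followed by the use of conjugacy of stabilisers within an orbit together with the class-function property of $\chi^{(p)}$, is exactly the correct way to establish this. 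You thus reach the same final formula by a valid argument where the paper's intermediate step is strictly speaking wrong.

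Your argument that $\alpha_i^{(p)}\in\set{0,\dots,d}$ is also a genuinely different route. The paper quietly uses that $\alpha_i^{(p)}$ is a priori a non-negative integer (being a multiplicity in the decomposition of $\pi_i$ into irreducibles, cf.~\eqref{eq:sym05}), and then the bound $\abs{\chi^{(p)}(h)}\leqs d$ gives $\alpha_i^{(p)}\leqs d$. You instead identify $\alpha_i^{(p)}$ as the trace of the idempotent $Q=\frac1{\abs{G_a}}\sum_{h\in G_a}\pi^{(p)}(h)$, i.e.\ the rank of the projector onto $G_a$-invariants in the representation space of $\pi^{(p)}$, which gives the bound directly and is self-contained. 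This also supplies a cleaner interpretation of $\alpha_i^{(p)}$ (the dimension of $G_a$-invariants of $\pi^{(p)}$, consistent with Frobenius reciprocity for the induced permutation representation on $G/G_a$). The one small thing you might add for completeness is that $\chi^{(p)}(h^{-1})=\cc{\chi^{(p)}(h)}$ holds because finite-group representations can be taken unitary; with that in place, your replacement of $\cc{\chi^{(p)}}$ by $\chi^{(p)}$ is justified.
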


Let us again call \defwd{active} (with respect to the irreducible
representation $\pi^{(p)}$) those orbits for which $\Tr(\Pip) > 0$. 

\begin{prop}[Matrix elements of $L^{(p)}$ for an irreducible representation
$\pi^{(p)}$ of dimension larger than $1$]
\label{prop_matrix_Lp} 
The subspace $P^{(p)}\C^n$ is spanned by the vectors 
$(u^a_i)_{i=1,\dots,m,a\in A_i}$ with components 
\begin{equation}
 \label{sym:14} 
 (u_i^a)_b = 
 \begin{cases}
  \displaystyle
  \frac{d}{\abs{G_a}} \sum_{g\in G_a}
  \cc{\chi^{(p)}(gh)} & \text{if $b=h(a)\in A_i$\;,} \\
  0 & \text{otherwise\;.}
 \end{cases}
\end{equation}
The matrix elements of $L^{(p)}$ between two different active orbits $A_i$
and $A_j$ are given by 
\begin{equation}
 \label{eq:sym15}
 \frac{\pscal{u_i^{h_1(a)}}{Lu_j^{h_2(b)}}}{\pscal{u_i^{h_1(a)}}{u_i^{h_1(a)}}}
= 
 \frac{c^*_{ij}}{\alpha_i^{(p)} m^*_i} \e^{-h^*(A_i,A_j)/\eps}
M^{(p)}_{h_1(a)h_2(b)}[1+\Order{\e^{-\theta/\eps}}]\;, 
\end{equation}
where $a\in A_i$, $b\in A_j$, $h_1, h_2\in G$, 
and 
\begin{equation}
 \label{eq:sym16}
 M^{(p)}_{h_1(a)h_2(b)} = \frac{1}{\abs{G_aG_b}} \sum_{g\in G_aG_b}
 \chi^{(p)}(h_1gh_2^{-1})\;.
\end{equation} 
The diagonal blocks of $L^{(p)}$ are given by the following expressions. 
Let $a\in A_i$ and let $a^*$ be such that $h(a,a^*)=\inf_b h(a,b)$. Then 
\begin{equation}
 \label{eq:sym17}
 \frac{\pscal{u_i^{h_1(a)}}{Lu_i^{h_2(a)}}}{\pscal{u_i^{h_1(a)}}{u_i^{h_1(a)}}}
= 
\begin{cases}
\displaystyle
 \frac{L^{(0)}_{ii}}{\alpha_i^{(p)}}
M^{(p)}_{h_1(a)h_2(a)}[1+\Order{\e^{-\theta/\eps}}]
&\text{if $a^*\notin A$}\;, \\
\\
\displaystyle
-\frac{L_{aa^*}}{\alpha_i^{(p)}}
M^{(p)}_{h_1(a)h_2(a)}[1+\Order{\e^{-\theta/\eps}}]
&\text{if $a^*\in A$}\;, 
\end{cases}
\end{equation}
where 
\begin{equation}
 \label{eq:sym18a}
 M^{(p)}_{h_1(a)h_2(a)} = \frac{1}{\abs{G_a}} \sum_{g\in G_a}
\chi^{(p)}(h_1gh_2^{-1})  
\end{equation} 
if $a^*\notin A_i$, while for $a^*=k(a)\in A_i$,
\begin{equation}
 \label{eq:sym18b}
 M^{(p)}_{h_1(a)h_2(a)} = 
 \begin{cases}
 \displaystyle
 \frac{1}{\abs{G_a}} \sum_{g\in G_a} \bigbrak{2\chi^{(p)}(h_1gh_2^{-1}) 
 - \chi^{(p)}(h_1kgh_2^{-1}) - \chi^{(p)}(h_1k^{-1}gh_2^{-1})} 
 & \text{if $k\neq k^{-1}$,} \\
 \displaystyle
 \frac{1}{\abs{G_a}} \sum_{g\in G_a} \bigbrak{\chi^{(p)}(h_1gh_2^{-1}) -
\chi^{(p)}(h_1kgh_2^{-1})} 
 & \text{if $k= k^{-1}$.} 
 \end{cases}
\end{equation}
\end{prop}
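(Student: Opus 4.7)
The strategy rests on identifying each basis vector $u_i^a$ with a multiple of a projected canonical basis vector: namely, $u_i^a = \abs{O_a}\, P^{(p)} e_a$, where $e_a$ denotes the indicator of the state $a$. This identity follows directly from \eqref{eq:sym06} and the cyclic invariance of characters; writing $b = h(a) \in A_i$, one has $(P^{(p)} e_a)_{h(a)} = (d/\abs{G})\sum_{k\in G_a}\overline{\chi^{(p)}(hk)}$, which equals $(u_i^a)_{h(a)}/\abs{O_a}$ after using $\chi^{(p)}(hk) = \chi^{(p)}(kh)$. Consequently every $u_i^a$ lies in $\image(P^{(p)})$, and by Lemma~\ref{lem_dimd} these vectors vanish exactly on the inactive orbits. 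A dimension count via $\Tr(P^{(p)}) = d\sum_i\alpha_i^{(p)}$ then confirms that the non-zero $u_i^a$ span $P^{(p)}\C^n$.

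This identification reduces every inner product $\pscal{u_i^{h_1(a)}}{L u_j^{h_2(b)}}$ to $\abs{O_a}\abs{O_b}\, (P^{(p)}Le_{h_2(b)})_{h_1(a)}$, since $P^{(p)}$ is orthogonal and commutes with $L$. Inserting \eqref{eq:sym06} and using the $G$-invariance $L_{g^{-1}x,y} = L_{x,gy}$ yields
\begin{equation*}
\pscal{u_i^{h_1(a)}}{L u_j^{h_2(b)}}
= \frac{d\abs{O_a}\abs{O_b}}{\abs{G}} \sum_{g\in G} \overline{\chi^{(p)}(g)}\, L_{h_1(a),\,g h_2(b)}.
\end{equation*}
For $i\neq j$, Lemma~\ref{lem_GaGb} identifies the $g$'s that contribute at the leading exponential rate $\e^{-h^*(A_i,A_j)/\eps}$: choosing $a\in A_i, b\in A_j$ with $h_{ab}=h^*(A_i,A_j)$, these are exactly $g \in h_1 G_a G_b h_2^{-1}$, and by \eqref{eq:sym04a} every other $g$ contributes a relative error of order $\e^{-\theta/\eps}$. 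An analogous computation gives the self-overlap $\pscal{u_i^{h_1(a)}}{u_i^{h_1(a)}} = d\abs{O_a}\alpha_i^{(p)}$, using that $\chi^{(p)}$ is a class function on $G_{h_1(a)} = h_1G_ah_1^{-1}$. Combined with the elementary identities $\abs{G_aG_b}/\abs{G_b} = \abs{G_a}/\abs{G_a\cap G_b}$, $c^*_{ij} = c_{ab}/\abs{G_a\cap G_b}$ and $m^*_i = m_a/\abs{G_a}$, one recovers \eqref{eq:sym15} with $M^{(p)}$ as in \eqref{eq:sym16}.

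For the diagonal blocks the same formula applies, but the leading contribution now has two competing sources: the diagonal entry $L_{aa}$ and the transitions from $a$ to its nearest neighbour $a^*$. When $a^*\notin A_i$, only $g \in h_1 G_a h_2^{-1}$ contributes at leading order (transitions to $g h_2(a) \in A_i$ other than $h_1(a)$ are penalised by $\e^{-\theta/\eps}$), and since $L_{aa} = L^{(0)}_{ii}[1+\Order{\e^{-\theta/\eps}}]$ one obtains \eqref{eq:sym18a}. When $a^* = k(a)\in A_i$, three classes of $g$ carry the leading exponential: the diagonal class $g\in h_1 G_ah_2^{-1}$, the class $g\in h_1 G_akG_ah_2^{-1}$ accounting for transitions to the $G_a$-translates of $k(a)$, and the class $g\in h_1 G_ak^{-1}G_ah_2^{-1}$ accounting for transitions to $k^{-1}(a)$, which occurs with the same rate by reversibility of $L$. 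Using $L_{aa} = -\sum_{c\neq a}L_{ac}$ to re-express the diagonal contribution in terms of $L_{aa^*}$ and combining with the other two classes produces the signed character sum \eqref{eq:sym18b}. The main technical obstacle here is the dichotomy $k = k^{-1}$ versus $k\neq k^{-1}$: in the involutive case the double cosets $G_a k G_a$ and $G_a k^{-1}G_a$ coincide and the three pieces collapse to two, yielding the coefficient $1$; in the non-involutive case Assumption~\ref{assump_sym_nondeg} together with the minimality of the edge $(a,a^*)$ forces $G_a kG_a$ and $G_ak^{-1}G_a$ to be disjoint, yielding the coefficient $2$. Tracking exactly which double cosets appear and verifying the disjointness step is the most delicate bookkeeping of the argument.
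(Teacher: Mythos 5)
Your overall strategy mirrors the paper's proof in Section~\ref{sec_proofG}: you identify each $u^a_i$ with $\abs{O_a}P^{(p)}e_a$, exactly as in \eqref{eq:em04}, exploit that $P^{(p)}$ is an orthogonal projector commuting with $L$, use $G$-invariance of $L$ to push the group element onto the second index, and then single out the dominant transitions via Lemma~\ref{lem_GaGb} for off-diagonal blocks and the zero-row-sum identity $L_{aa}=-\sum_{c\neq a}L_{ac}$ for diagonal blocks. The normalization $\pscal{u^a}{u^a}=\alpha_i^{(p)}d$ (in the paper's weighted inner product, or $d\abs{O_a}\alpha_i^{(p)}$ in the standard one) and the off-diagonal reduction agree with~\eqref{eq:em08}.

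The part that does not go through is your handling of the diagonal block when $a^*=k(a)\in A_i$, $k\neq k^{-1}$. You claim that Assumption~\ref{assump_sym_nondeg} and minimality of $(a,a^*)$ force the double cosets $G_a k G_a$ and $G_a k^{-1}G_a$ to be disjoint, and you derive the coefficient $2$ in~\eqref{eq:sym18b} from that disjointness. This is false already in the paper's own $N=4$ example (Section~\ref{ssec:ex_N4}): there $G_a=\set{\id,r^2s,sc,r^2c}$, $k=r$, $k\neq k^{-1}=r^3$, and yet $r^3 = (r^2s)\,r\,(r^2s)\in G_a r G_a$, so $G_a r G_a = G_a r^{-1}G_a$. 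Despite the coincidence of double cosets, the coefficient is still $2$: the relevant count is the number of distinct near-neighbours $k(a)$ and $k^{-1}(a)$ (equivalently, that $k^2\notin G_a$), which makes $L_{aa}$ absorb two copies of $L_{aa^*}$ in the row-sum identity. The paper avoids this pitfall by parameterising the sum over $G$ as $g=k'g'$ with $g'\in G_a$ and $k'$ a \emph{left}-coset representative (see~\eqref{eq:em10}), then singling out the two cosets $kG_a$ and $k^{-1}G_a$; your double-coset decomposition introduces extra bookkeeping, and the disjointness step you lean on to close the case analysis simply does not hold.
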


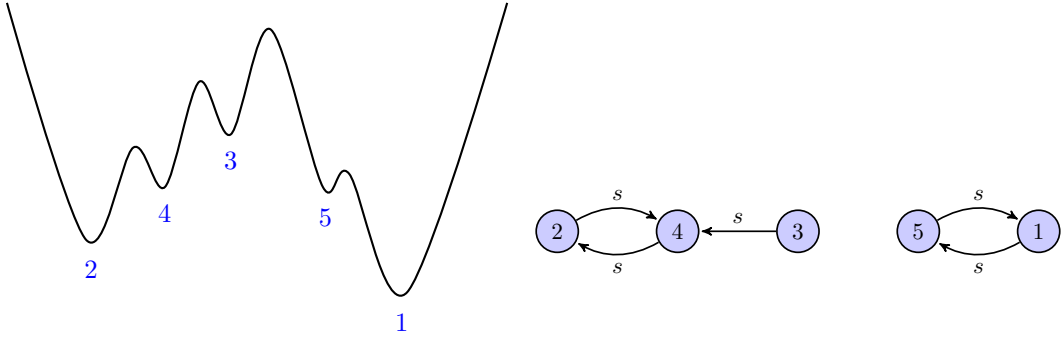
\begin{figure}
\begin{center}
\begin{tikzpicture}[>=stealth',scale=0.7]



\draw[black,thick] plot[smooth,tension=.55]
  coordinates{(-2.5,4.5) (-1,0) (-0.1,1.75) (0.5,1) (1.15,3) (1.75,2) (2.5,4)
  (3.5,1) (4,1.25) (5.1,-1) (7,4.5)};


\node[] (1) at (5.0,-1) {}; 
\node[blue] [below of=1,yshift=0.6cm] {\small $1$};

\node[] (2) at (-0.9,0) {}; 
\node[blue] [below of=2,yshift=0.6cm] {\small $2$};

\node[] (3) at (1.75,2) {}; 
\node[blue] [below of=3,yshift=0.65cm] {\small $3$};

\node[] (4) at (0.5,1) {}; 
\node[blue] [below of=4,yshift=0.65cm] {\small $4$};

\node[] (5) at (3.55,1) {}; 
\node[blue] [below of=5,yshift=0.6cm] {\small $5$};

\end{tikzpicture}
\scalebox{0.8}{{\raise 10mm \hbox{
\begin{tikzpicture}[->,>=stealth',shorten >=1pt,auto,node distance=2cm,
  thick,main
node/.style={circle,minimum size=0.7cm,fill=blue!20,draw}]
\node[main node] (2) {$2$};
\node[main node] (4) [right of=2] {$4$};
\node[main node] (3) [right of=4] {$3$};
\node[main node] (5) [right of=3] {$5$};
\node[main node] (1) [right of=5] {$1$};
\draw (2) edge[bend left] node[above] {$s$} (4);
\draw (4) edge[bend left] node[below] {$s$} (2);
\draw (3) edge node[above] {$s$} (4);
\draw (5) edge[bend left] node[above] {$s$} (1);
\draw (1) edge[bend left] node[below] {$s$} (5);
\end{tikzpicture}}}}
\end{center}
\vspace{-3mm}
\caption[]{Example of a graph of successors, with two cycles $(2,4)$ and
$(1,5)$, and associated potential.}
\label{fig:successors} 
\end{figure}

In order to apply this result, we have to choose, for each orbit $A_i$,
$d\alpha_i^{(p)}$ linearly independent vectors among the $(u^a_i)_{a\in
A_i}$. 

This result shows that in an appropriate basis, the matrix $L^{(p)}$ has a block
structure, with one block $L^{(p)}_{ij}$ for each pair of active orbits. Each
block has the same exponential weight as in the one-dimensional case, but the
prefactors are multiplied by a nontrivial matrix $M^{(p)}$ depending only on the
representation and on the stabilisers of the two orbits. 

In order to determine the eigenvalues, recall the
definition~\eqref{eq:sym10c} of the successor $A_{s(i)}$ of an orbit $A_i$. We
define an oriented graph on the set of orbits, with oriented edges $i\to s(i)$
(see~\figref{fig:successors}). Assumption~\ref{assump_sym_nondeg} implies that
each orbit is either in a cycle of length $2$, or in no cycle. If $i$ is in a
cycle of length $2$ and $V_i < V_{s(i)}$, we say that $i$ is \defwd{at the
bottom of a cycle}. We will make the following on-degeneracy assumption:

\begin{assump}
\label{assump_dimd_successor} 
Whenever $(i,j)$ form a cycle in the graph of successors, $L^{(p)}_{jj}$ is
invertible and the leading coefficient of the matrix 
\begin{equation}
 \label{eq:sym19}
 L^{(p)}_{ii} - L^{(p)}_{ij} \bigpar{L^{(p)}_{jj}}^{-1}L^{(p)}_{ji}
\end{equation} 
has the same exponent as the leading coefficient of $L^{(p)}_{ii}$. 
\end{assump}

Note that this assumption will not hold in the one-dimensional case
whenever $j=s(i)$. The reason it holds generically in the present case is that
there is no particular reason that the matrix $L^{(p)}$ is a generator. In fact
the row sums will typically be different from zero for each active orbit, which
can be viewed as the fact that each active orbit communicates with a cemetery
state. We will give an example in the next section. Under this assumption, we
obtain the following characterisation of eigenvalues. 

\begin{theorem}[Eigenvalues associated with representations of dimension larger
than $1$]
\label{thm_dimd} 
If Assumptions~\ref{assump_sym_meta}, \ref{assump_sym_nondeg}
and~\ref{assump_dimd_successor} hold and $\eps$ is small enough, then
the spectrum of $L^{(p)}$ consists, up to multiplicative errors
$1+\Order{\e^{-\theta/\eps}}$, in 
\begin{itemiz}
\item 	the eigenvalues of the matrices~\eqref{eq:sym19} for all orbits $A_i$
such that $i$ is at the bottom of a cycle and $s(i)$ is active;
\item 	the eigenvalues of all other diagonal blocks $L^{(p)}_{ii}$.
\end{itemiz}
\end{theorem}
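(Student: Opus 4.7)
\emph{Proof plan.} The approach is to extend the block Schur-complement reduction used for Theorem~\ref{thm_asym} to the block-matrix setting required for representations of dimension $d\geqs 2$.

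First, by Proposition~\ref{prop_matrix_Lp}, in the basis $(u_i^a)$ the matrix $L^{(p)}$ has a block decomposition with one block $L^{(p)}_{ij}$ per pair of active orbits. Off-diagonal blocks $L^{(p)}_{ij}$ carry leading exponential weight $\e^{-h^*(A_i,A_j)/\eps}$, while the diagonal block $L^{(p)}_{ii}$ carries leading exponent $h^*(A_i,A_{s(i)})$ by \eqref{eq:sym17}--\eqref{eq:sym18b}. Ordering the active orbits according to the metastable hierarchy of orbits (Assumption~\ref{assump_sym_meta}) and invoking Assumption~\ref{assump_sym_nondeg} together with reversibility, the graph of successors on the orbits contains only cycles of length~$2$, exactly as in Section~\ref{ssec_asymcost}. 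Orbits lying on a chain feeding into a cycle will behave, from the perspective of the reduction, like tops-of-cycles at a later stage.

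Second, I would iteratively peel off the tops of these cycles via Schur complements. For a $2$-cycle $(i,j)$ of successors with $V_{A_i} < V_{A_j}$, the reversibility relation \eqref{meta02a} yields $h^*(A_i,A_j) = h^*(A_j,A_i) + (V_{A_j}-V_{A_i}) > h^*(A_j,A_i)$, so $L^{(p)}_{jj}$ has strictly larger leading magnitude than $L^{(p)}_{ii}$. Assumption~\ref{assump_dimd_successor} ensures that $L^{(p)}_{jj}$ is invertible, so the Schur-complement identity
\begin{equation*}
 \det\bigpar{L^{(p)} - \lambda\one}
 = \det\bigpar{L^{(p)}_{jj} - \lambda\one}\,
 \det\Bigpar{L^{(p)}_{\mathrm{red}}(\lambda) - \lambda\one}
\end{equation*}
splits the spectrum of $L^{(p)}$ into the spectrum of $L^{(p)}_{jj}$ (the eigenvalues attributed to the top-of-cycle orbit $j$) and that of the reduced matrix $L^{(p)}_{\mathrm{red}}(\lambda)$ obtained after elimination of the $j$-block. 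Its $(i,i)$-entry, evaluated at $\lambda = 0$, is precisely the matrix in \eqref{eq:sym19}; the same assumption guarantees that this matrix has the same leading exponent as $L^{(p)}_{ii}$.

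Third, the metastable order (Assumption~\ref{assump_sym_meta}) ensures that the off-diagonal blocks $L^{(p)}_{jk}$ with $k \neq i$ among the orbits not yet eliminated have leading exponent exceeding $h^*(A_j,A_i)$ by at least $\theta$, so they contribute only multiplicative errors $1+\Order{\e^{-\theta/\eps}}$ to the eigenvalues of $L^{(p)}_{\mathrm{red}}(\lambda)$ in the relevant range of $\lambda$. When $s(i)$ is inactive (or $i$ is a chain element whose successor has already been eliminated), the corresponding off-diagonal blocks are simply absent in $L^{(p)}$ or suppressed by $\e^{-\theta/\eps}$, so $L^{(p)}_{ii}$ is effectively decoupled and furnishes its own eigenvalues directly. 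Iterating the reduction on the smaller system of orbits that remain, with each step widening the exponent separation by at least $\theta$, yields the spectrum as claimed, and summing the block sizes gives total multiplicity $\sum_i d\alpha_i^{(p)} = \dim P^{(p)}\C^n = n_p$.

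The principal obstacle is controlling the matrix-valued prefactors $M^{(p)}$ from \eqref{eq:sym16}--\eqref{eq:sym18b} through the Schur complement: since these matrices can be low-rank, one must rule out accidental cancellations that would depress the leading exponent of \eqref{eq:sym19} below that of $L^{(p)}_{ii}$. This is precisely what Assumption~\ref{assump_dimd_successor} excludes; once it is granted, the remainder of the argument reduces to standard block perturbation and resolvent estimates exploiting the exponential gap~$\theta$.
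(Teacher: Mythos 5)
Your high-level strategy --- iterative block elimination of tops-of-cycles in the graph of orbit successors --- matches the paper's, but your implementation diverges and leaves two genuine gaps. First, the paper does not argue via the Schur-complement determinant identity, precisely because that identity produces a $\lambda$-dependent reduced matrix $L^{(p)}_{\mathrm{red}}(\lambda)$: extracting eigenvalues then requires solving a nonlinear self-consistency problem, and ``evaluating at $\lambda=0$'' only gives the answer up to errors that you would still need to bound against the exponential hierarchy. The paper sidesteps this entirely by constructing an explicit similarity transformation $LS = ST$ with $T$ block-triangular (Proposition~\ref{prop:S12}, relations~\eqref{eq:bti04}--\eqref{eq:bti06}), so the spectrum of $L^{(p)}$ is \emph{exactly} the union of the spectra of the diagonal blocks of $T$, and the approximation is pushed into the matrix elements of $T$ rather than into the eigenvalue equation. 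To carry through your approach you would need to supply the resolvent/perturbation estimate that replaces this.

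Second, and more seriously, you do not verify that the blocks $L^{(p)}_{ij}$ and $L^{(p)}_{ji}$ appearing in the Schur complement~\eqref{eq:sym19} are the \emph{original} blocks and not blocks already modified to leading order by earlier elimination steps. Since the statement of Theorem~\ref{thm_dimd} asserts the eigenvalues are those of~\eqref{eq:sym19} built from $L^{(p)}$'s own blocks, this is the crux: if $L^{(p)}_{in}$ or $L^{(p)}_{ni}$ had already been perturbed at leading order before the step that eliminates $n$, the formula~\eqref{eq:sym19} would be wrong. The paper's proof (end of Section~\ref{sec_proofT}) handles this by arguing that such a prior modification would require a path $i\to m\to n$ in the graph of successors with $m\succ n$, which Assumption~\ref{assump_sym_nondeg} rules out when $i\to n\to i$ is a cycle. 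Your proposal only invokes Assumption~\ref{assump_dimd_successor} (which rules out cancellations \emph{within} the Schur complement) but not Assumption~\ref{assump_sym_nondeg} in this role. Relatedly, the paper controls the propagation of exponent gaps across steps by associating to $L^{(p)}$ a scalar auxiliary matrix $L_*$ and invoking Theorem~\ref{thm_triang_d1} and Corollary~\ref{cor_b1d2} to conclude that communication heights are preserved after each elimination; your statement that ``each step widens the exponent separation by at least $\theta$'' is not quite the right claim (the gap is preserved, not widened) and in any case requires this auxiliary-matrix argument, which your proposal omits.
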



\subsection{Clustering of eigenvalues and computational cost}
\label{ssec:clustering} 

The results in the previous subsections show that the eigenvalues of the
generator $L$ are of the form $\lambda_k=-C_k \e^{-H_k/\eps}$, where the
Arrhenius exponents $H_k$ can only be of two (not necessarily disjoint) types:
\begin{enum}
\item 	either $H_k$ is a communication height occurring in the metastable
hierarchy of the orbits $A_1,\dots,A_{n_G}$;
\item 	or $H_k$ is of the form $h^*(A_i,A_{s(i)})$, where $s(i)$ is the
successor of an orbit; this case occurs for irreducible representations of
dimension $d\geqs 2$, or for one-dimensional irreducible representations if the
orbit $A_{s(i)}$ is inactive.
\end{enum}
As a consequence, we see a phenomenon of \emph{clustering of eigenvalues}:
there will in general be groups of several eigenvalues sharing the same
Arrhenius exponent $H_k$, but with possibly different prefactors $C_k$.

Note that Arrhenius exponents of the second type may be different from those
arising from the trivial representation, which are the only ones seen for
$G$-invariant initial distributions. The associated eigenvalues govern the
relaxation of initial distributions that are not uniform on each orbit towards
the stationary distribution, which is uniform on each orbit.

The computational cost to determine all $n$ eigenvalues can be estimated as
follows:
\begin{itemiz}
\item 	The cost for determining the $n_G$ eigenvalues associated with the
trivial representation is $\Order{n_G^2}$ at most, as a consequence of the
estimates obtained in Section~\ref{ssec_asymcost}.  
\item 	To determine all eigenvalues associated with nontrivial one-dimensional
irreducible representations, one can reuse the graph of successors already
computed in the previous step. The only new thing to be done is to determine
the active orbits. The cost for this has order $r_1(G)n_G$, where $r_1(G)$ is
the number of irreducible representations of dimension~$1$.
\item 	For each irreducible representation $\pi^{(p)}$ of dimension $d\geqs 2$,
one can again reuse the already determined graph of successors. It remains to
compute the matrices $M^{(p)}$, at cost $\Order{n_G(\alpha^{(p)}d)^2}$, and to
diagonalise the blocks $L^{(p)}_{ii}$ or \eqref{eq:sym19}. The exact cost is
hard to determine, but it can in any case be written as $\Order{\beta(G)n_G}$,
where $\beta(G)$ depends only on the symmetry group $G$. 
\end{itemiz}
As a result, we obtain the estimate 
\begin{equation}
 \label{eq:clustering02}
 \bigOrder{n_G[n_G + r_1(G) + \beta(G)]}
\end{equation} 
for the total cost of determining all $n$ eigenvalues, where $r_1(G)$ and
$\beta(G)$ depend only on the group $G$. Note that $r_1(G)$ cannot exceed the
order $\abs{G}$ of the group, because of the completeness relation $\sum_d
r_d(G)d^2=\abs{G}$.

If we consider a sequence of processes with a fixed symmetry group $G$ and
increasing number of states $n$, the cost will be $\Order{n_G}$. We expect that
typically, $n_G=\Order{n/\abs{G}}$, though one can construct counterexamples
where this number is larger. One can also encounter situations where $\abs{G}$
grows with $n$; we will discuss such a case in Section~\ref{ssec:costN}.



\section{Examples}
\label{sec_ex}

We discuss in this section two applications of the previous results, which are
motivated by Example~\ref{ex:Kawasaki}. In Appendix~\ref{appendix}, we show how
the local minima and saddles of that system can be computed for small coupling
$\gamma$. Here we determine the eigenvalues of the associated markovian jump
processes, for the cases $N=4$, which is relatively simple and can be solved in
detail, and $N=8$, which is already substantially more involved (the case $N=6$
features degenerate saddles, so we do not discuss it here). 


\subsection{The case $N=4$}
\label{ssec:ex_N4} 

As explained in Appendix~\ref{appendix}, for $0\leqs\gamma<2/5$ the system
described in Example~\ref{ex:Kawasaki} admits $6$ local minima, connected by
$12$ saddles of index $1$. The potential~\eqref{meta04}  is invariant under the
symmetry group $G=D_4\times\Z_2=\langle r, s, c\rangle$, which has
order $16$ and is generated by 
the rotation $r:(x_1,x_2,x_3,x_4)\mapsto(x_2,x_3,x_4,x_1)$, the reflection 
$s:(x_1,x_2,x_3,x_4)\mapsto(x_4,x_3,x_2,x_1)$, and the sign change
$c:x\mapsto-x$. The local minima form two orbits 
\begin{align}
\nonumber
A_1 &= \bigset{(1,1,-1,-1), (1,-1,-1,1), (-1,-1,1,1),  (-1,1,1,-1)}
+ \Order{\gamma} \\
\nonumber
&= \bigset{a,r(a),r^2(a),r^3(a)} \\
A_2 &= \bigset{(1,-1,1,-1), (-1,1,-1,1)} + \Order{\gamma}
= \bigset{b,r(b)}\;,
\label{eq:ex4_01} 
\end{align}
where we have chosen $a=(1,1,-1,-1) + \Order{\gamma}$ and $b=(1,-1,1,-1) +
\Order{\gamma}$ as representatives. 
The associated stabilisers are 
\begin{align}
\nonumber
G_a &= \bigset{\id, r^2s, sc, r^2c} \\
G_b &= \bigset{\id, rs, r^2, r^3s, sc, rc, r^2sc, r^3c}\;.
\label{eq:ex4_02}
\end{align} 
The graph of connections forms an octaeder as shown in~\figref{fig:octaeder}. 
Note in particular that~\eqref{eq:sym04c} is satisfied. Indeed, $\abs{G_a\cap
G_b}=2$, and each site in $A_1$ has $\abs{G_a}/\abs{G_a\cap G_b}=2$ neighbours
in $A_2$, while each site in $A_2$ has $\abs{G_b}/\abs{G_a\cap G_b}=4$
neighbours in $A_1$. The associated graph in terms of orbits is also shown
in~\figref{fig:octaeder}. 

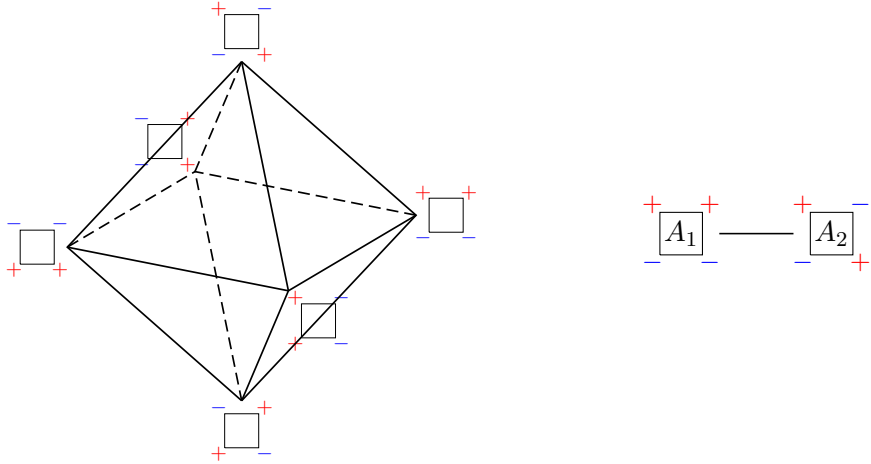
\begin{figure}
\begin{center}
\scalebox{0.8}{
\begin{tikzpicture}

\pos{(3*sin{\angxy}+0.5}{3*cos{\angxy}*sin{\angxz}-0.5};
\carre{\sitep}{\sitem}{\sitem}{\sitep};

\pos{3*cos{\angxy}+0.5}{-3*sin{\angxy}*sin{\angxz}};
\carre{\sitep}{\sitep}{\sitem}{\sitem};

\pos{-3*sin{\angxy}-0.5}{-3*cos{\angxy}*sin{\angxz}+0.5};
\carre{\sitem}{\sitep}{\sitep}{\sitem};

\pos{-3*cos{\angxy}-0.5}{3*sin{\angxy}*sin{\angxz}};
\carre{\sitem}{\sitem}{\sitep}{\sitep};

\pos{0}{3*cos{\angxz}+0.5};
\carre{\sitep}{\sitem}{\sitep}{\sitem};

\pos{0}{-3*cos{\angxz}-0.5};
\carre{\sitem}{\sitep}{\sitem}{\sitep};

\draw[thick] (-3*cos{\angxy},3*sin{\angxy}*sin{\angxz}) -- (3*sin{\angxy},3*cos{\angxy}*sin{\angxz}) -- (3*cos{\angxy},-3*sin{\angxy}*sin{\angxz});
\draw[thick,dash pattern=on 2mm off 1mm] (3*cos{\angxy},-3*sin{\angxy}*sin{\angxz}) -- (-3*sin{\angxy},-3*cos{\angxy}*sin{\angxz}) -- (-3*cos{\angxy},3*sin{\angxy}*sin{\angxz});
\draw[thick] (0,3*cos{\angxz}) -- (3*sin{\angxy},3*cos{\angxy}*sin{\angxz}) -- (0,-3*cos{\angxz});
\draw[thick] (0,3*cos{\angxz}) -- (3*cos{\angxy},-3*sin{\angxy}*sin{\angxz}) -- (0,-3*cos{\angxz});
\draw[thick,dash pattern=on 2mm off 1mm] (0,3*cos{\angxz}) -- (-3*sin{\angxy},-3*cos{\angxy}*sin{\angxz}) -- (0,-3*cos{\angxz});
\draw[thick] (0,3*cos{\angxz}) -- (-3*cos{\angxy},3*sin{\angxy}*sin{\angxz}) -- (0,-3*cos{\angxz});

\end{tikzpicture}
}
\hspace{15mm}
{\raise 25mm \hbox{\begin{tikzpicture}
\pos{0}{-1}
\carre{\sitep}{\sitep}{\sitem}{\sitem}
\node[shape=circle, minimum size=1cm] (A1) at (0,-1) {$A_1$};

\pos{2}{-1}
\carre{\sitep}{\sitem}{\sitep}{\sitem}
\node[shape=circle, minimum size=1cm] (A2) at (2,-1) {$A_2$};

\draw[semithick]    (A1) -- (A2);

\end{tikzpicture}}}
\end{center}
\vspace{-5mm}
\caption[]{The graph $\cG=(\cX,E)$ for the case $N=4$ has $6$ nodes and $12$
edges, forming an octaeder. The associated graph on the set of orbits has two
sites and one edge.}
\label{fig:octaeder} 
\end{figure}

The analysis of the potential~\eqref{meta04} shows that the transition
probabilities are of the form 
\begin{equation}
 \label{eq:ex4_03}
 L_{ab} = \frac{c_{ab}}{m_a} \e^{-h_{ab}/\eps}\;, \qquad
 L_{ba} = \frac{c_{ab}}{m_b} \e^{-h_{ba}/\eps}\;, \qquad
 L_{aa'} = \frac{c_{aa'}}{m_a} \e^{-h_{aa'}/\eps}\;, 
\end{equation} 
where the exponents satisfy (cf.\ Table~\ref{table:orbits_4} in
Appendix~\ref{appendix})  
\begin{equation}
 \label{eq:ex4_04}
 h_{ba} < h_{aa'} < h_{ab}
\end{equation} 
whenever $0 < \gamma < 2/5$. We set $\theta = (h_{aa'} - h_{ba})\wedge(h_{ab}
- h_{aa'})$.
The generator $L$ is of the form 
\begin{equation}
 \label{eq:ex4_05}
 L = 
 \begin{pmatrix}
 L^{11} & L^{12} \\ L^{21} & L^{22}
 \end{pmatrix}\;,
\end{equation} 
with blocks 
\begin{gather}
 \label{eq:ex4_06}
 L^{11} = 
 \begin{pmatrix}
 -2L_{aa'}-2L_{ab} & L_{aa'} & 0 & L_{aa'} \\
 L_{aa'} & -2L_{aa'}-2L_{ab} & L_{aa'} & 0 \\
 0 & L_{aa'} & -2L_{aa'}-2L_{ab} & L_{aa'} \\
 L_{aa'} & 0 & L_{aa'} & -2L_{aa'}-2L_{ab} \\
 \end{pmatrix}\;, \\
 L^{12} = 
 \begin{pmatrix}
 L_{ab} & L_{ab} \\
 L_{ab} & L_{ab} \\
 L_{ab} & L_{ab} \\
 L_{ab} & L_{ab} 
 \end{pmatrix}\;, \qquad
 L^{21} = 
 \begin{pmatrix}
 L_{ba} & L_{ba} & L_{ba} & L_{ba} \\
 L_{ba} & L_{ba} & L_{ba} & L_{ba} 
 \end{pmatrix}
\;, \qquad
 L^{22} = 
 \begin{pmatrix}
 -4L_{ba} & 0 \\
 0 & -4L_{ba}
 \end{pmatrix}\;.
 \nonumber
\end{gather} 
We can now apply the results of Section~\ref{sec_res}. From the known
irreducible representations of the dihedral group $D_4$ (cf.\
Example~\ref{ex:dihedral}) and the fact that $c$ commutes with $r$ and $s$, we
deduce that $G$ has $8$ irreducible representations of dimension $1$, given by 
\begin{equation}
 \label{eq:ex4_07}
 \pi_{\rho\sigma\tau}(r^is^jc^k) = \rho^i\sigma^j\tau^k\;, 
 \qquad
 \rho, \sigma, \tau = \pm1\;,
\end{equation} 
and two irreducible representation of dimension $2$, that we denote
$\pi_{1,\pm}$, with characters 
\begin{equation}
 \label{eq:ex4_08}
 \chi_{1,\pm}(r^is^jc^k) = 2\cos(i\pi/2) \delta_{j0} (\pm1)^k\;.
\end{equation} 
Applying Lemma~\ref{lem_dim1} and Lemma~\ref{lem_dimd}, we obtain  
Table~\ref{table:active_4} for active and inactive orbits. 

\begin{table}[ht]
\begin{center}
\begin{tabular}{|c||c|c||r|}
\hline
\hlinespace
 & $A_1$ & $A_2$ & $\alpha d$ \\
\hline
\hline
\hlinespace
$\pi_{+++}$ & $1$ & $1$ & $2$ \\
$\pi_{++-}$ & $0$ & $0$ & $0$ \\
$\pi_{+-+}$ & $0$ & $0$ & $0$ \\
$\pi_{+--}$ & $0$ & $0$ & $0$ \\
$\pi_{-++}$ & $1$ & $0$ & $1$ \\
$\pi_{-+-}$ & $0$ & $0$ & $0$ \\
$\pi_{--+}$ & $0$ & $0$ & $0$ \\
$\pi_{---}$ & $0$ & $1$ & $1$ \\
\hline
\hlinespace
$\pi_{1,+}$ & $0$ & $0$ & $0$ \\
$\pi_{1,-}$ & $2$ & $0$ & $2$ \\
\hline
\hline
\hlinespace
$\abs{A}$ & $4$ & $2$ & $6$ \\
\hline
\end{tabular}
\end{center}
\caption[]{Active and inactive orbits and number of eigenvalues for the
different irreducible representations when $N=4$.}
\label{table:active_4} 
\end{table}

Table~\ref{table:active_4} shows that the permutation representation $\pi$
induced by $G$ on $\cX = A_1\cup A_2$ admits the decomposition
\begin{equation}
 \label{eq:ex4_09}
 \pi = 2\pi_{+++} \oplus \pi_{-++} \oplus \pi_{---} \oplus \pi_{1,-}\;.
\end{equation} 
We can now determine the eigenvalues associated with each irreducible
representation:

\begin{itemiz}
\item 	Trivial representation $\pi_{+++}$: 
The associated subspace has dimension $2$, and is spanned by the vectors 
$\transpose{(1,1,1,1,0,0)}$ and $\transpose{(0,0,0,0,1,1)}$. The matrix in this
basis is given by 
\begin{equation}
 \label{eq:ex4_10}
 L^{(0)} = 
 \begin{pmatrix}
 -2L_{ab} & 2L_{ab} \\ 4L_{ba} & -4L_{ba}
 \end{pmatrix}\;,
\end{equation} 
as can be checked by a direct computation, and is compatible with
Proposition~\ref{prop:L_trivial}. The eigenvalues of $L^{(0)}$ are $0$ and 
$-4L_{ba}-2L_{ab}$, which is also compatible with
Theorem~\ref{thm_dim1_trivial} (giving the leading-order behaviour
$-4L_{ba}[1+\Order{\e^{-\theta/\eps}}]$). In particular, we conclude that if
$\mu$ is the uniform distribution on $A_2$, then we have the Eyring--Kramers
formula 
\begin{equation}
 \label{eq:ex4_11}
 \bigexpecin{\mu}{\tau_{A_1}} = 
 \frac14 \frac{m_b}{c_{ab}} \e^{h_{ba}/\eps} 
 \bigbrak{1+\Order{\e^{-\theta/\eps}}}\;.
\end{equation} 
Note the prefactor $1/4$, which is due to the symmetry. 

\item 	Representation $\pi_{-++}$: 
From~\eqref{sym:10} we see that the associated subspace is spanned by the
vector $\transpose{(1,-1,1,-1,0,0)}$. A direct computation shows that the
corresponding eigenvalue is $-4L_{aa'}-2L_{ab}$, which is also compatible with
\eqref{eq:sym14}, where we have to apply the second case, and use the fact that 
$\pi_{-++}(r)=-1$. 

\item 	Representation $\pi_{---}$: 
From~\eqref{sym:10} we see that the associated subspace is spanned by the
vector $\transpose{(0,0,0,0,1,-1)}$. A direct computation shows that the
corresponding eigenvalue is $-4L_{ba}$, and the same result is obtained by
applying~\eqref{eq:sym14} (first case). 

\item 	Representation $\pi_{1,-}$: 
From~\eqref{sym:14} we obtain that the associated subspace is spanned by 
the vectors $\transpose{(2,0,-2,0,0,0)}$ and $\transpose{(0,2,0,-2,0,0)}$. The
associated matrix is 
\begin{equation}
 \label{eq:ex4_12}
 L^{1,-} = 
 \begin{pmatrix}
 -2L_{aa'}-2L_{ab} & 0 \\ 0 & -2L_{aa'}-2L_{ab} 
 \end{pmatrix}
\end{equation}
and thus $-2L_{aa'}-2L_{ab}$ is an eigenvalue of multiplicity $2$. 
The leading-order behaviour $-2L_{aa'}[1+\Order{\e^{-\theta/\eps}}]$ is also
obtained using~\eqref{eq:sym17} with $a^*=r(a)$ and~\eqref{eq:sym18b} (first
case), which shows that $M=2\one$. 
\end{itemiz}

In summary, to leading order the eigenvalues of the generator are given by 
\begin{equation}
 \label{eq:ex4_13}
 0\;,\; -2L_{aa'}\;,\; -2L_{aa'}\;,\; -4L_{aa'}\;,\; -4L_{ba}\;,\; -4L_{ba}\;.
\end{equation} 
They form three clusters sharing an exponent, with possibly different
prefactors. Note in particular that the spectral gap is given by $2L_{aa'}$,
which is smaller than in the case of an asymmetric double-well, where it would
be $L_{ba}$. This is due to the fact that the slowest process in the system is
the internal dynamics of the orbit $A_1$. 


\subsection{The case $N=8$}
\label{ssec:ex_N8} 

In the case $N=8$, the potential is invariant under the group
$G=D_8\times\Z_2$, which has order $32$. As explained in
Appendix~\ref{appendix}, there are $182$ local minima, connected by $560$
saddles of index $1$. The local minima form $12$ orbits, of cardinality varying
between $2$ and~$32$ depending on the size of their stabiliser, see
Table~\ref{table:orbits_8}. 

\begin{table}[ht]
\begin{center}
\begin{tabular}{|c|r|c|l|}
\hline
\hlinespace
$A$ & $\abs{A}$ & $a$ & $G_a$ \\
\hline 
\hline 
\hlinespace
$A_1$ &  $8$ & $(+,+,+,+,-,-,-,-)$ & $\set{\id,r^4s,r^4c,sc}$ \\
$A_2$ &  $4$ & $(+,+,-,-,+,+,-,-)$ &
$\set{\id,r^2s,r^4,r^6s,sc,r^2c,r^4sc,r^6c}$ \\
$A_3$ & $16$ & $(+,+,+,-,-,+,-,-)$ & $\set{\id,r^3s}$ \\
$A_4$ & $16$ & $(+,-,-,-,+,+,+,-)$ & $\set{\id,sc}$ \\
$A_5$ &  $8$ & $(+,-,-,+,-,+,+,-)$ & $\set{\id,r^4s,r^4c,sc}$ \\
$A_6$ & $16$ & $(+,+,-,+,-,+,-,-)$ & $\set{\id,sc}$ \\
$A_7$ &  $2$ & $(+,-,+,-,+,-,+,-)$  &
$\{\id,rs,r^2,r^3s,r^4,r^5s,r^6,r^7s,$ \\
 &  &  &
$\phantom{\{}sc,rc,r^2sc,r^3c,r^4sc,r^5c,r^6sc,r^7c\}$ \\
\hline 
\hlinespace
$A_8$ & $32$ & $(\alpha,\alpha,\beta,\beta,\beta,\alpha,\beta,\beta)$ & 
$\set{\id}$ \\
$A_9$ & $16$ & $(\alpha,\alpha,\alpha,\beta,\beta,\beta,\beta,\beta)$ & 
$\set{\id,r^3s}$ \\
$A_{10}$ & $16$ & $(\beta,\alpha,\beta,\beta,\alpha,\beta,\alpha,\beta)$ & 
$\set{\id,r^3s}$ \\
$A_{11}$ & $16$ & $(\beta,\alpha,\beta,\alpha,\beta,\beta,\beta,\alpha)$ & 
$\set{\id,r^3s}$ \\
$A_{12}$ & $32$ & $(\alpha,\beta,\alpha,\alpha,\beta,\beta,\beta,\beta)$ & 
$\set{\id}$ \\
\hline
\end{tabular}
\end{center}
\caption[]{Orbits $A_i$ for the case $N=8$ with their
cardinality, one representative $a$ and its stabiliser $G_a$. The symbols $\pm$
stand for $\pm1+\Order{\gamma}$, while $\alpha=\pm5/\sqrt{19}+\Order{\gamma}$
and $\beta=\mp3/\sqrt{19}+\Order{\gamma}$. Stabilisers of other elements
$a'=g(a)$ of any orbit are obtained by conjugation with $g$.}
\label{table:orbits_8} 
\end{table}

Local minima occur in two types:
\begin{itemiz}
\item 	those with $4$ coordinates equal to $1+\Order{\gamma}$, and $4$
coordinates equal to $-1+\Order{\gamma}$; we denote these coordinates $+$ and
$-$;
\item 	and those with $3$ coordinates equal to $\pm\alpha$ and $5$ coordinates
equal to $\pm\beta$, where $\alpha=5/\sqrt{19}+\Order{\gamma}$ and
$\beta=-3/\sqrt{19}+\Order{\gamma}$. 
\end{itemiz}

\begin{figure}[t]
\begin{center}
\scalebox{0.9}{
\hspace{-22mm}
\begin{tikzpicture}


\pos{0}{0}
\octogon{\sitep}{\sitep}{\sitep}{\sitep}{\sitem}{\sitem}{\sitem}{\sitem}
\node[shape=circle, minimum size=1.2cm] (A1) at (0,0) {$A_1$};

\pos{2.5}{0}
\octogon{\sitea}{\sitea}{\sitea}{\siteb}{\siteb}{\siteb}{\siteb}{\siteb}
\node[shape=circle, minimum size=1.2cm] (A9) at (2.5,0) {$A_9$};

\pos{5}{0}
\octogon{\sitep}{\sitep}{\sitep}{\sitem}{\sitep}{\sitem}{\sitem}{\sitem}
\node[shape=circle, minimum size=1.2cm] (A4) at (5,0) {$A_4$};

\pos{10.0}{-2.5}
\octogon{\sitea}{\siteb}{\sitea}{\siteb}{\sitea}{\siteb}{\siteb}{\siteb}
\node[shape=circle, minimum size=1.2cm] (A11) at (10,-2.5) {$A_{11}$};

\pos{12.5}{-2.5}
\octogon{\sitep}{\sitem}{\sitep}{\sitem}{\sitep}{\sitem}{\sitep}{\sitem}
\node[shape=circle, minimum size=1.2cm] (A7) at (12.5,-2.5) {$A_7$};

\pos{0}{-2.5}
\octogon{\sitea}{\sitea}{\siteb}{\sitea}{\siteb}{\siteb}{\siteb}{\siteb}
\node[shape=circle, minimum size=1.2cm] (A12) at (0,-2.5) {$A_{12}$};

\pos{2.5}{-2.5}
\octogon{\sitep}{\sitep}{\sitep}{\sitem}{\sitem}{\sitep}{\sitem}{\sitem}
\node[shape=circle, minimum size=1.2cm] (A3) at (2.5,-2.5) {$A_3$};

\pos{5}{-2.5}
\octogon{\sitea}{\sitea}{\siteb}{\siteb}{\sitea}{\siteb}{\siteb}{\siteb}
\node[shape=circle, minimum size=1.2cm] (A8) at (5,-2.5) {$A_8$};

\pos{7.5}{-2.5}
\octogon{\sitep}{\sitep}{\sitem}{\sitem}{\sitep}{\sitep}{\sitem}{\sitem}
\node[shape=circle, minimum size=1.2cm] (A2) at (7.5,-2.5) {$A_2$};

\pos{0}{-5}
\octogon{\sitep}{\sitep}{\sitem}{\sitep}{\sitem}{\sitem}{\sitep}{\sitem}
\node[shape=circle, minimum size=1.2cm] (A5) at (0,-5) {$A_5$};

\pos{2.5}{-5}
\octogon{\sitea}{\siteb}{\sitea}{\siteb}{\siteb}{\sitea}{\siteb}{\siteb}
\node[shape=circle, minimum size=1.2cm] (A10) at (2.5,-5) {$A_{10}$};

\pos{5}{-5}
\octogon{\sitep}{\sitep}{\sitem}{\sitep}{\sitem}{\sitep}{\sitem}{\sitem}
\node[shape=circle, minimum size=1.2cm] (A6) at (5,-5) {$A_6$};


\draw[thick]    (A1) -- node[above] {$32$} (A9);

\draw[thick]    (A9) -- node[above] {$32$} (A4);

\draw[thick]    (A11) -- node[above] {$16$} (A7);

\draw[thick]    (A1) -- node[right] {$32$} (A12);

\draw[thick]    (A9) -- node[right] {$16$} (A3);

\draw[thick]    (A4) -- node[right] {$32$} (A8);

\draw[thick]    (A12) -- node[above] {$32$} (A3);
    
\draw[thick]    (A3) -- node[above] {$64$} (A8);

\draw[thick]    (A8) -- node[above] {$32$} (A2);

\draw[thick]    (A12) -- node[right] {$32$} (A5);

\draw[thick]    (A5) -- node[above] {$32$} (A10);

\draw[thick]    (A6) -- node[above] {$32$} (A10);

\draw[thick]    (A3) -- node[right] {$16$} (A10);

\draw[thick] (A6) -- node[right] {$32$} (A8);

\draw[thick] (A4) to[out=-10,in=135] node[above] {$32$} (A11);

\draw[thick] (A6) to[out=10,in=-135] node[below] {$32$} (A11);

\draw[thick] (A12) to[out=140,in=120,distance=4cm] node[above] {$32$} (A4);

\draw[thick] (A12) to[out=220,in=-120,distance=4cm] node[below] {$32$} (A6);

\end{tikzpicture}
}
\end{center}
\vspace{-18mm}
\caption[]{Graph on the set of orbits for the case $N=8$. Each node displays a
particular representative of the orbit. Figures next to the edges denote the
total number of connections between elements of the orbits.
Note that there is a kind of hysteresis effect, in the sense that going around a
loop on the graph, following the connection rules, one does not necessarily end
up with the same representative of the orbit.}
\label{fig:orbits_N8} 
\end{figure}
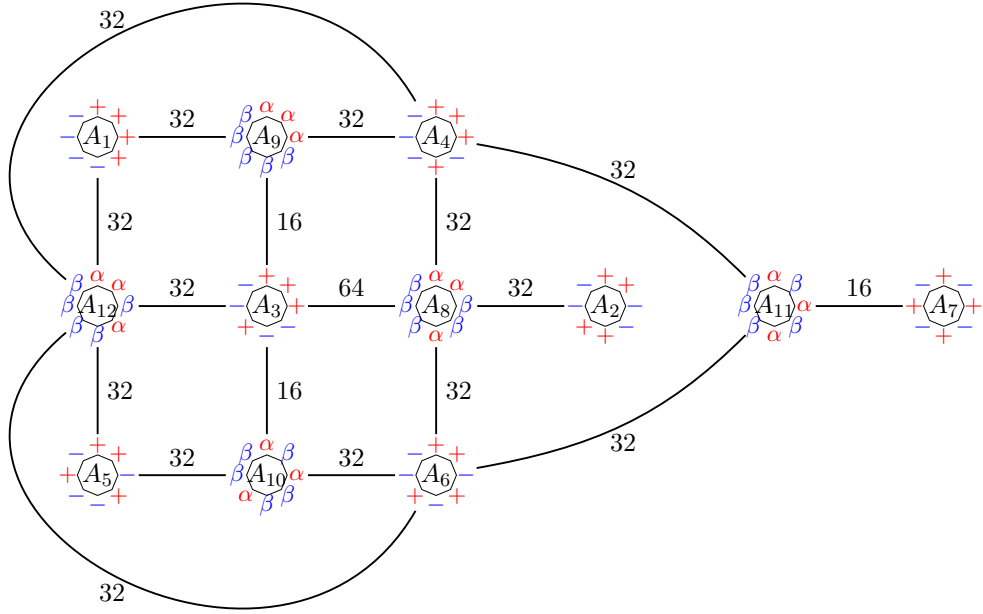

These local minima are connected according to the following rules: 
\begin{align}
\nonumber
&3\times(\alpha\longleftrightarrow+) & & 3\times(-\alpha\longleftrightarrow-) \\
\label{eq:connection_rules} 
&1\times(\beta\longleftrightarrow+)  & & 1\times(-\beta\longleftrightarrow-) \\
\nonumber
&4\times(\beta\longleftrightarrow-)  & & 4\times(-\beta\longleftrightarrow+)\;, 
\end{align}
meaning that each $\alpha$ and one of the $\beta$s are connected with a $+$, and
so on. A major simplification will arise from the fact that there are no
connections among sites of a same orbit. 

We do not attempt to draw the full graph, which has $182$ vertices and $560$
edges. However, \figref{fig:orbits_N8} shows the graph on the set of orbits. The
metastable hierarchy has been established by computing the height of saddles to
second order in $\gamma$ with the help of computer algebra.

The group $G=D_8\times\Z_2$ has again $8$ irreducible representations of
dimension $1$, given by 
\begin{equation}
 \label{eq:ex8_01}
 \pi_{\rho\sigma\tau}(r^is^jc^k) = \rho^i\sigma^j\tau^k\;, 
 \qquad
 \rho, \sigma, \tau = \pm1\;,
\end{equation} 
In addition, it has $6$ irreducible representations of dimension $2$ deduced
from those of $D_8$, cf.~\eqref{eq:Dn_2dim}. We denote them 
$\pi_{l,\pm}$, $l=1,2,3$, and their characters satisfy
(see~\eqref{eq:Dn_characters})  
\begin{equation}
 \label{eq:ex8_02}
 \chi_{l,\pm}(r^is^jc^k) = 2\cos(il\pi/4) \delta_{j0} (\pm1)^k\;.
\end{equation}
Applying Lemma~\ref{lem_dim1} and Lemma~\ref{lem_dimd}, we obtain
Table~\ref{table:active_8} of active and inactive orbits.

\begin{table}[th]
\begin{center}
\begin{tabular}{|c||c|c|c|c|c|c|c|c|c|c|c|c||r|}
\hline
\hlinespace
 & $A_1$ & $A_2$ & $A_3$ & $A_4$ & $A_5$ & $A_6$ & $A_7$ & $A_8$ & $A_9$ &
$A_{10}$ & $A_{11}$ & $A_{12}$ & $\alpha d$ \\
\hline
\hline
\hlinespace
$\pi_{+++}$ & $1$ & $1$ & $1$ & $1$ & $1$ & $1$ & $1$ & $1$ & $1$ & $1$ & $1$ &
$1$ & $12$\\
$\pi_{++-}$ & $0$ & $0$ & $1$ & $0$ & $0$ & $0$ & $0$ & $1$ & $1$ & $1$ & $1$ &
$1$ &  $6$\\
$\pi_{+-+}$ & $0$ & $0$ & $0$ & $0$ & $0$ & $0$ & $0$ & $1$ & $0$ & $0$ & $0$ &
$1$ &  $2$\\
$\pi_{+--}$ & $0$ & $0$ & $0$ & $1$ & $0$ & $1$ & $0$ & $1$ & $0$ & $0$ & $0$ &
$1$ &  $4$\\
$\pi_{-++}$ & $1$ & $1$ & $0$ & $1$ & $1$ & $1$ & $0$ & $1$ & $0$ & $0$ & $0$ &
$1$ &  $7$\\
$\pi_{-+-}$ & $0$ & $0$ & $0$ & $0$ & $0$ & $0$ & $0$ & $1$ & $0$ & $0$ & $0$ &
$1$ &  $2$\\
$\pi_{--+}$ & $0$ & $0$ & $1$ & $0$ & $0$ & $0$ & $0$ & $1$ & $1$ & $1$ & $1$ &
$1$ &  $6$\\
$\pi_{---}$ & $0$ & $0$ & $1$ & $1$ & $0$ & $1$ & $1$ & $1$ & $1$ & $1$ & $1$ &
$1$ &  $9$\\
\hline
\hlinespace
$\pi_{1,+}$ & $0$ & $0$ & $2$ & $2$ & $0$ & $2$ & $0$ & $4$ & $2$ & $2$ & $2$ &
4 & $20$\\
$\pi_{1,-}$ & $2$ & $0$ & $2$ & $2$ & $2$ & $2$ & $0$ & $4$ & $2$ & $2$ & $2$ &
4 & $24$\\
$\pi_{2,+}$ & $2$ & $0$ & $2$ & $2$ & $2$ & $2$ & $0$ & $4$ & $2$ & $2$ & $2$ &
4 & $24$\\
$\pi_{2,-}$ & $0$ & $2$ & $2$ & $2$ & $0$ & $2$ & $0$ & $4$ & $2$ & $2$ & $2$ &
4 & $22$\\
$\pi_{3,+}$ & $0$ & $0$ & $2$ & $2$ & $0$ & $2$ & $0$ & $4$ & $2$ & $2$ & $2$ &
4 & $20$\\
$\pi_{3,-}$ & $2$ & $0$ & $2$ & $2$ & $2$ & $2$ & $0$ & $4$ & $2$ & $2$ & $2$ &
4 & $24$\\
\hline
\hline
\hlinespace
$\abs{A}$ & $8$ & $4$ & $16$ & $16$ & $8$ & $16$ & $2$ & $32$ & $16$ & $16$ &
$16$ & $32$ & $182$ \\
\hline
\end{tabular}
\end{center}
\caption[]{Active and inactive orbits and number of eigenvalues for the
different irreducible representations when $N=8$. There are $182$ eigenvalues
in total, $48$ associated with $1$-dimensional irreducible representations, and
$134$ associated with $2$-dimensional irreducible representations.}
\label{table:active_8} 
\end{table}

It is now possible to determine the eigenvalues associated with each irreducible
representation. The trivial representation $\pi_{+++}$ will yield $12$
eigenvalues, which are given by Theorem~\ref{thm_dim1_trivial}. The only
difference with the Eyring--Kramers formula of the asymmetric case is an extra
factor of the form $\abs{G_c}/\abs{G_a\cap G_b}$, where $(a,b)$ is the highest
edge of an optimal path from $A_k$ to $\cM_{k-1}$, and $c\in A_k$. For
instance, the optimal path from $A_7$ to $\cM_{6}$ is $A_7\to A_{11}\to A_4$,
and its highest edge is $A_7\to A_{11}$. We thus obtain 
\begin{equation}
 \label{eq:ex8_03}
 \lambda^{(0)}_7 = 8 \frac{c_{a_7a_{11}}}{m_{a_7}} \e^{-H(A_7,A_{11})/\eps}
 \bigbrak{1+\Order{\e^{-\theta/\eps}}}
\end{equation}
where $a_7\in A_7$ and $a_{11}\in A_{11}$, because $\abs{G_{a_7}}=16$ and
$\abs{G_{a_7}\cap G_{a_{11}}}=2$ (cf.\ Table~\ref{table:orbits_8}). 

\begin{figure}[t]
\vspace{-15mm}
\begin{center}
\scalebox{0.9}{
\hspace{-22mm}
\begin{tikzpicture}
            
\pos{0}{0}
\octogon{\sitep}{\sitep}{\sitep}{\sitep}{\sitem}{\sitem}{\sitem}{\sitem}
\node[shape=circle, minimum size=1.2cm] (A1) at (0,0) {$A_1$};

\pos{2.5}{0}
\cimetiere
\node[shape=circle, minimum size=1.2cm] (A9) at (2.5,0) {$ $};

\pos{5}{0}
\octogon{\sitep}{\sitep}{\sitep}{\sitem}{\sitep}{\sitem}{\sitem}{\sitem}
\node[shape=circle, minimum size=1.2cm] (A4) at (5,0) {$A_4$};

\pos{10}{-2.5}
\cimetiere
\node[shape=circle, minimum size=1.2cm] (A11) at (10,-2.5) {$ $};

\pos{12.5}{-2.5}
\cimetiere
\node[shape=circle, minimum size=1.2cm] (A7) at (12.5,-2.5) {$ $};

\pos{0}{-2.5}
\octogon{\sitea}{\sitea}{\siteb}{\sitea}{\siteb}{\siteb}{\siteb}{\siteb}
\node[shape=circle, minimum size=1.2cm] (A12) at (0,-2.5) {$A_{12}$};

\pos{2.5}{-2.5}
\cimetiere
\node[shape=circle, minimum size=1.2cm] (A3) at (2.5,-2.5) {$ $};

\pos{5}{-2.5}
\octogon{\sitea}{\sitea}{\siteb}{\siteb}{\sitea}{\siteb}{\siteb}{\siteb}
\node[shape=circle, minimum size=1.2cm] (A8) at (5,-2.5) {$A_8$};

\pos{7.5}{-2.5}
\octogon{\sitep}{\sitep}{\sitem}{\sitem}{\sitep}{\sitep}{\sitem}{\sitem}
\node[shape=circle, minimum size=1.2cm] (A2) at (7.5,-2.5) {$A_2$};

%
%

\pos{0}{-5}
\octogon{\sitep}{\sitep}{\sitem}{\sitep}{\sitem}{\sitem}{\sitep}{\sitem}
\node[shape=circle, minimum size=1.2cm] (A5) at (0,-5) {$A_5$};

\pos{2.5}{-5}
\cimetiere
\node[shape=circle, minimum size=1.2cm] (A10) at (2.5,-5) {$ $};

\pos{5}{-5}
\octogon{\sitep}{\sitep}{\sitem}{\sitep}{\sitem}{\sitep}{\sitem}{\sitem}
\node[shape=circle, minimum size=1.2cm] (A6) at (5,-5) {$A_6$};

\draw[semithick]    (A1) -- (A9);

\draw[semithick]    (A9) -- (A4);

\draw[semithick]    (A11) -- (A7);

\draw[semithick]    (A1) -- (A12);

\draw[semithick]    (A9) -- (A3);

\draw[semithick]    (A4) -- (A8);

\draw[semithick]    (A12) -- (A3);
    
\draw[semithick]    (A3) -- (A8);

\draw[semithick]    (A8) -- (A2);

\draw[semithick]    (A12) -- (A5);

\draw[semithick]    (A6) -- (A10);

\draw[semithick]    (A5) -- (A10);

\draw[semithick]    (A3) -- (A10);

\draw[semithick]    (A6) to (A8);

\draw[semithick]    (A4) to[out=-10,in=135] (A11);

\draw[semithick]    (A6) to[out=10,in=-135] (A11);

\draw[semithick]    (A12) to[out=140,in=120,distance=4cm] (A4);

\draw[semithick]    (A12) to[out=220,in=-120,distance=4cm] (A6);

\end{tikzpicture}
}
\end{center}
\vspace{-18mm}
\caption[]{Graph for the case $N=8$ associated with the representation
$\pi_{-++}$.}
\label{fig:orbits_N8_-++} 
\end{figure}
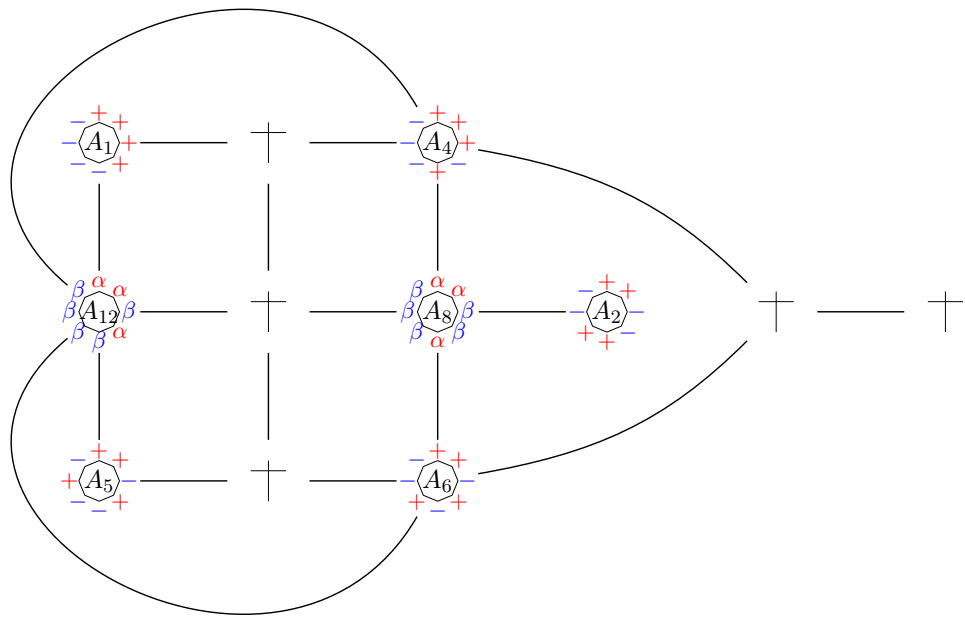

The eigenvalues associated with other irreducible representations of dimension
$1$ can be deduced from the metastable hierarchy of the corresponding set of
active orbits. For instance, \figref{fig:orbits_N8_-++} shows the graph
obtained for the representation $\pi_{-++}$, which yields $7$ eigenvalues. 
An important difference to the previous case arises from the fact that some
communication heights relevant for the eigenvalues are associated with
transitions to the cemetery state. In particular, $A_1$ is no longer at the
bottom of the hierarchy (which is occupied by the cemetery state), and thus
there will be an eigenvalue of order $\e^{-H(A_1,A_9)/\eps}$, because $A_9$ is
the successor of $A_1$, of the form 
\begin{equation}
 \label{eq:ex8_04}
 \lambda^{(-++)}_1 = -4 \frac{c_{a_1a_9}}{m_{a_1}} \e^{-H(A_1,A_9)/\eps}
 \bigbrak{1+\Order{\e^{-\theta/\eps}}}\;.
\end{equation}
Eigenvalues associated with irreducible representations of dimension $2$ are
given by Theorem~\ref{thm_dimd}. The graph of successors is shown in
\figref{fig:successors_8}. Observe that $A_1$ is at the bottom of the cycle
containing $A_1$ and $A_9$. For instance, for the representation $\pi_{1,-}$, 
applying Proposition~\ref{prop_matrix_Lp} with the choice of basis
$(u_i^a,u_i^{a'})$ with $a'=r^2(a)$ for each orbit, we obtain 
\begin{align}
\nonumber
 L_{11}^{(1,-)} &= L_{11}^{(0)} \one \brak{1+\Order{\e^{-\theta/\eps}}}\;, 
 & L_{11}^{(0)} = -4 \frac{c_{a_1a_9}}{m_{a_1}} \e^{-H(A_1,A_9)/\eps}\;,  \\
\nonumber
 L_{19}^{(1,-)} &= L_{11}^{(0)} M_{19} \brak{1+\Order{\e^{-\theta/\eps}}}\;, \\
\nonumber
 L_{91}^{(1,-)} &= L_{99}^{(0)} M_{91} \brak{1+\Order{\e^{-\theta/\eps}}}\;,  
 & L_{99}^{(0)} = -2 \frac{c_{a_9a_1}}{m_{a_9}} \e^{-H(A_9,A_1)/\eps}\;,  \\
 L_{99}^{(1,-)} &= L_{99}^{(0)} \one \brak{1+\Order{\e^{-\theta/\eps}}}\;,   
 \label{eq:ex8_05}
\end{align} 
where 
\begin{equation}
 \label{eq:ex8_06}
 M_{19} = \frac14
 \begin{pmatrix}
 2 + \sqrt{2} & \sqrt{2} \\ -\sqrt{2} & 2+\sqrt{2}
 \end{pmatrix}\;, 
 \qquad
  M_{91} = \frac14
 \begin{pmatrix}
 2 + \sqrt{2} & -\sqrt{2} \\ \sqrt{2} & 2+\sqrt{2}
 \end{pmatrix}\;. 
\end{equation} 
Thus by Theorem~\ref{thm_dimd}, the eigenvalues associated with $A_1$ are
those of the matrix 
\begin{equation}
 \label{eq:ex8_07}
 L^{(1,-)}_{11} - L^{(1,-)}_{19} \bigpar{L^{(1,-)}_{99}}^{-1}L^{(1,-)}_{91}
 = -(2-\sqrt{2}) \frac{c_{a_1a_9}}{m_{a_1}} \e^{-H(A_1,A_9)/\eps} 
 \one\brak{1+\Order{\e^{-\theta/\eps}}}\;.
\end{equation} 
We thus obtain a double eigenvalue given by the Eyring--Kramers law with an
extra factor of $(2-\sqrt{2})$. 

\begin{figure}[t]
\begin{center}
\scalebox{0.9}{
\hspace{-22mm}
\begin{tikzpicture}[>=stealth']


\pos{0}{0}
\octogon{\sitep}{\sitep}{\sitep}{\sitep}{\sitem}{\sitem}{\sitem}{\sitem}
\node[shape=circle, minimum size=1.2cm] (A1) at (0,0) {$A_1$};

\pos{2.5}{0}
\octogon{\sitea}{\sitea}{\sitea}{\siteb}{\siteb}{\siteb}{\siteb}{\siteb}
\node[shape=circle, minimum size=1.2cm] (A9) at (2.5,0) {$A_9$};

\pos{5}{0}
\octogon{\sitep}{\sitep}{\sitep}{\sitem}{\sitep}{\sitem}{\sitem}{\sitem}
\node[shape=circle, minimum size=1.2cm] (A4) at (5,0) {$A_4$};

\pos{10.0}{-2.5}
\octogon{\sitea}{\siteb}{\sitea}{\siteb}{\sitea}{\siteb}{\siteb}{\siteb}
\node[shape=circle, minimum size=1.2cm] (A11) at (10,-2.5) {$A_{11}$};

\pos{12.5}{-2.5}
\octogon{\sitep}{\sitem}{\sitep}{\sitem}{\sitep}{\sitem}{\sitep}{\sitem}
\node[shape=circle, minimum size=1.2cm] (A7) at (12.5,-2.5) {$A_7$};

\pos{0}{-2.5}
\octogon{\sitea}{\sitea}{\siteb}{\sitea}{\siteb}{\siteb}{\siteb}{\siteb}
\node[shape=circle, minimum size=1.2cm] (A12) at (0,-2.5) {$A_{12}$};

\pos{2.5}{-2.5}
\octogon{\sitep}{\sitep}{\sitep}{\sitem}{\sitem}{\sitep}{\sitem}{\sitem}
\node[shape=circle, minimum size=1.2cm] (A3) at (2.5,-2.5) {$A_3$};

\pos{5}{-2.5}
\octogon{\sitea}{\sitea}{\siteb}{\siteb}{\sitea}{\siteb}{\siteb}{\siteb}
\node[shape=circle, minimum size=1.2cm] (A8) at (5,-2.5) {$A_8$};

\pos{7.5}{-2.5}
\octogon{\sitep}{\sitep}{\sitem}{\sitem}{\sitep}{\sitep}{\sitem}{\sitem}
\node[shape=circle, minimum size=1.2cm] (A2) at (7.5,-2.5) {$A_2$};

\pos{0}{-5}
\octogon{\sitep}{\sitep}{\sitem}{\sitep}{\sitem}{\sitem}{\sitep}{\sitem}
\node[shape=circle, minimum size=1.2cm] (A5) at (0,-5) {$A_5$};

\pos{2.5}{-5}
\octogon{\sitea}{\siteb}{\sitea}{\siteb}{\siteb}{\sitea}{\siteb}{\siteb}
\node[shape=circle, minimum size=1.2cm] (A10) at (2.5,-5) {$A_{10}$};

\pos{5}{-5}
\octogon{\sitep}{\sitep}{\sitem}{\sitep}{\sitem}{\sitep}{\sitem}{\sitem}
\node[shape=circle, minimum size=1.2cm] (A6) at (5,-5) {$A_6$};


\draw[->,shorten >=1pt] (A1) edge[bend left]  (A9);

\draw[->,shorten >=1pt] (A9) edge[bend left] (A1);

\draw[->,shorten >=1pt]    (A12) -- (A1);

\draw[->,shorten >=1pt]    (A5) -- (A12);

\draw[->,shorten >=1pt]    (A3) -- (A9);

\draw[->,shorten >=1pt]    (A10) -- (A3);

\draw[->,shorten >=1pt]    (A4) -- (A9);

\draw[->,shorten >=1pt]    (A8) -- (A4);

\draw[->,shorten >=1pt]    (A2) -- (A8);

\draw[->,shorten >=1pt]    (A7) -- (A11);

\draw[->,shorten >=1pt] (A11) to[out=135,in=-10]  (A4);

\draw[->,shorten >=1pt] (A6) to[out=-120,in=220,distance=4cm] (A12);

\end{tikzpicture}
}
\end{center}
\vspace{-18mm}
\caption[]{The graph of successors for $N=8$.}
\label{fig:successors_8} 
\end{figure}
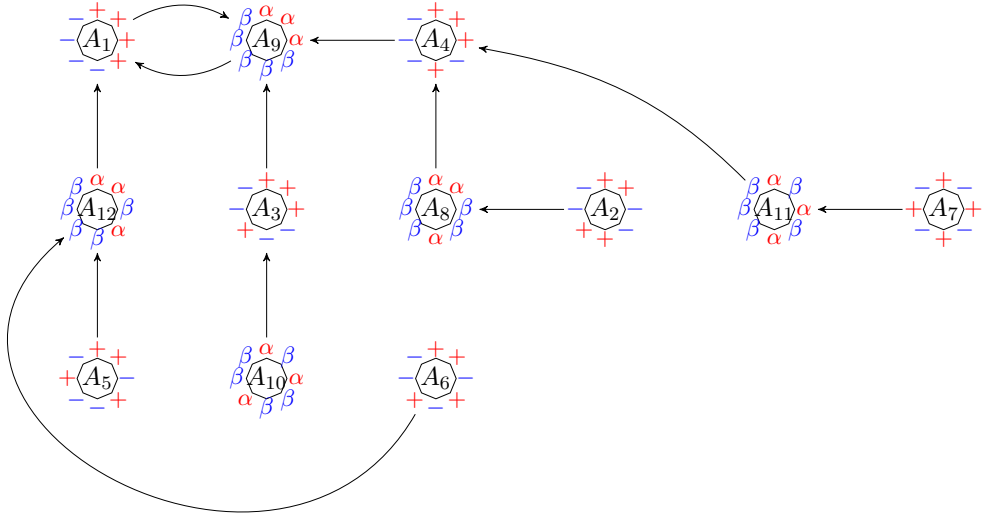


\subsection{Computational cost for arbitrary even $N$}
\label{ssec:costN} 

For general even $N$, the number $n$ of local minima of the potential is of
order $2^N$ (see Appendix~\ref{appendix}), while the symmetry group $G$ has
order $4N$. However, all irreducible representations have dimension $1$ or $2$;
in fact (cf.\ Example~\ref{ex:dihedral}), there are exactly $r_1(G)=8$
irreducible representations of dimension $1$, and $r_2(G)=N-2$ irreducible
representations of dimension $2$. This implies that the coefficient $\beta(G)$
in~\eqref{eq:clustering02} satisfies $\beta(G)=\Order{N}$.

As a consequence, the computational cost for determining all eigenvalues is at
most $\Order{2^{2N}}$, 
where we only have used the trivial bound $n_G \leqs n=\Order{2^N}$. It is
possible that a more detailed analysis will reveal that actually $n_G =
\Order{2^N/N}$, which would decrease the cost by a factor $N^2$.




\section{Proofs -- Group theory}
\label{sec_proofG}


In this section, we give the proofs of the different expressions for the matrix
elements of $L$ restricted to the subspaces $\Pip\C^n$ associated with the
irreducible representations $\pi^{(p)}$. Although we have introduced the
results by starting with the trivial representation, then moving to other
representations of dimension $1$, and finally to higher-dimensional
representations, it will be more straightforward to give directly proofs in the
case of a general irreducible representation of arbitrary dimension $d$, and
then to particularize to the cases $d=1$ and $p=0$. 

To simplify notations, we will fix
an irreducible representation $\pi=\pi^{(p)}$, orbits $A=A_i$, $B=A_j$, and
elements $a\in A$ and $b\in B$. We write $\alpha_i=\smash{\alpha_i^{(p)}}$ and 
$\chi=\chi^{(p)}$. Recall that $\pi_i(g)$ denotes the permutation matrix induced
by $g$ on the orbit $A$ (we will consider $\pi_i$ as a linear map on $\C^n$
which is identically zero on $\cX\setminus A$). The associated projector
$\PA=\Pip$ is given by
(cf.\ \eqref{eq:sym06})
\begin{equation}
 \label{eq:pgt00} 
 \PA = \frac{d}{\abs{G}} \sum_{g\in G} \cc{\chi(g)}\pi_i(g)\;.
\end{equation} 
The only nonzero matrix elements of $P_i$ are those between elements in $A$,
and they can be written as 
\begin{equation}
 \label{eq:pgt00a} 
 (\PA)_{a h(a)} = \frac{d}{\abs{G}} \sum_{g\in G_a} \cc{\chi(gh)} 
 \qquad \forall a\in A, \forall h\in G\;.
\end{equation} 
We write $\PB =\Pjp$ for the similarly defined projector associated with the
orbit $B$.

\begin{proof}[Proof of Lemma~\ref{lem_dimd}]
Taking the trace of~\eqref{eq:pgt00}, we obtain  
\begin{equation}
 \label{eq:pgt01}
 \alpha_i d = \Tr(\PA ) = \frac{d}{\abs{G}} \sum_{g\in G}\cc{\chi(g)}
\Tr(\pi_i(g))\;.
\end{equation} 
Note that $\Tr(\pi_i(g)) = \abs{A^g} = \abs{A}\indexfct{g\in G_a}$. Therefore, 
\begin{equation}
 \label{eq:pgt02}
 \alpha_i = \frac{\abs{A}}{\abs{G}} \sum_{g\in G_a}\cc{\chi(g)}
 = \frac{1}{\abs{G_a}} \sum_{g\in G_a}\cc{\chi(g)}\;.
\end{equation} 
Since we have at the same time $\alpha_i\in\N_0$ and $\cc{\chi(g)}\in[-d,d]$, so
that $\alpha_i\in[-d,d]$, we conclude that
necessarily $\alpha_i\in\set{0,1,\dots d}$. 
\end{proof}

\begin{proof}[Proof of Lemma~\ref{lem_dim1}]
In the particular case $d=1$, Lemma~\ref{lem_dimd} reduces to 
\begin{equation}
 \label{eq:pgt03}
 \alpha_i = 
 \frac{1}{\abs{G_a}} \sum_{g\in G_a}\cc{\chi(g)} \in\set{0,1}\;.
\end{equation} 
This leaves only two possibilities: either $\alpha_i=1$ and all $\chi(g)=\pi(g)$
are equal to $1$ for $g\in G_a$, or $\alpha_i=0$ and the above sum is equal to
$0$. 
\end{proof}

Note that in the particular case of the trivial representation $\pi=\pi^{(0)}$,
we are always in the case $\alpha_i=1$. Thus all orbits are active for the
trivial representation. 

We now proceed to construct basis vectors for $P_i\C^n$. 
Let $e^a$ denote the canonical basis vector of $\C^n$ associated with
$a\in A$, and let $u^a\in\image \PA$ be defined by  
\begin{equation}
 \label{eq:em04} 
 u^a = \frac{\abs{G}}{\abs{G_a}} \PA  e^a\;.  
\end{equation}
By~\eqref{eq:pgt00a}, its nonzero components are given by 
\begin{equation}
 \label{eq:em04b}
 (u^a)_{h(a)} = \frac{\abs{G}}{\abs{G_a}}(\PA)_{a h(a)} 
 = \frac{d}{\abs{G_a}} \sum_{g\in G_a} \cc{\chi(gh)}\;. 
\end{equation}
This expression is equivalent to~\eqref{sym:14}. For one-dimensional
representations, it reduces to~\eqref{sym:10}. Indeed 
$\chi(gh)=\pi(gh)=\pi(g)\pi(h)$ in dimension $1$, and we can apply
Lemma~\ref{lem_dim1}. For the trivial representation, $(u^a)_{h(a)}$ is
identically equal to $1$.  

In order to compute matrix elements of $L$, we introduce the inner product on
$\C^n$ 
\begin{equation}
 \label{eq:em03}
 \pscal{u}{v} = \frac{1}{\abs{G}} \sum_{g\in G} \cc{u_{g(a)}} v_{g(a)} 
 = \frac{\abs{G_a}}{\abs{G}} \sum_{g\in G/G_a} \cc{u_{g(a)}} v_{g(a)}\;,
\end{equation} 
where $g\in G/G_a$ is a slight abuse of notation for $gG_a\in G/G_a$ (it means
that we pick one representative for each coset $gG_a$). Strictly
speaking, only the restriction of $\pscal{\cdot}{\cdot}$ to the orbit $A$ is an
inner product, since it is not positive definite on all of $\C^n$. 

\begin{lemma}
The vector $u^a$ is normalised in such a way that $\pscal{u^a}{u^a} = \alpha_i
d$. Furthermore, for $v^b$ defined in an analogous way,
\begin{equation}
 \label{eq:em05} 
 \frac{\pscal{u^a}{Lv^b}}{\pscal{u^a}{u^a}} = 
 \frac{d}{\alpha_i\abs{G}\abs{G_b}}
 \sum_{g\in G} \sum_{g'\in G} 
 \cc{\chi(g)}\chi(g') L_{g(a) g'(b)}\;.
\end{equation} 
\end{lemma}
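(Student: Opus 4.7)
The plan is to carry out the computation in two stages. First I would establish the normalisation $\pscal{u^a}{u^a}=\alpha_i d$. The key observation is that, for any vector $w$, the inner product defined in~\eqref{eq:em03} satisfies $\pscal{u^a}{w}=\frac{\abs{G_a}}{\abs{G}}\langle u^a,w\rangle$, where $\langle\cdot,\cdot\rangle$ denotes the standard Hermitian inner product on $\C^n$; this is because each $c\in A$ has exactly $\abs{G_a}$ preimages $h\in G$ with $h(a)=c$. Combined with $u^a=\frac{\abs{G}}{\abs{G_a}}\PA e^a$ and the fact that $\PA$ is Hermitian (which follows from $\cc{\chi(g^{-1})}=\chi(g)$, a consequence of the unitarity of $\pi^{(p)}$) and idempotent, this gives $\langle u^a,u^a\rangle=(\abs{G}/\abs{G_a})^2(\PA)_{aa}$. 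Identity~\eqref{eq:pgt02} then yields $(\PA)_{aa}=d\alpha_i\abs{G_a}/\abs{G}$, whence $\pscal{u^a}{u^a}=d\alpha_i$.

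For the matrix element, the ratio in~\eqref{eq:em05} is unchanged if $\pscal{\cdot}{\cdot}$ is replaced by $\langle\cdot,\cdot\rangle$, since both numerator and denominator carry the same prefactor $\abs{G_a}/\abs{G}$. Writing $v^b=\frac{\abs{G}}{\abs{G_b}}\PB e^b$ and using Hermiticity of $\PA$ one obtains $\langle u^a,Lv^b\rangle=\frac{\abs{G}^2}{\abs{G_a}\abs{G_b}}(\PA L\PB e^b)_a$. Expanding the matrix entries of the two projectors via
\begin{equation*}
\sum_{c}(\PA)_{ac}\varphi(c)=\frac{d}{\abs{G}}\sum_{g_1\in G}\cc{\chi(g_1)}\varphi(g_1(a)),\qquad \sum_{c'}(\PB)_{c'b}\psi(c')=\frac{d}{\abs{G}}\sum_{g_2\in G}\cc{\chi(g_2)}\psi(g_2^{-1}(b))
\end{equation*}
(valid for arbitrary functions $\varphi,\psi$), one finds
\begin{equation*}
(\PA L\PB e^b)_a=\frac{d^2}{\abs{G}^2}\sum_{g_1,g_2\in G}\cc{\chi(g_1)\chi(g_2)}\,L_{g_1(a),\,g_2^{-1}(b)}.
\end{equation*}
A final change of variables $g'=g_2^{-1}$, together with $\cc{\chi(g^{-1})}=\chi(g)$, converts this into $\frac{d^2}{\abs{G}^2}\sum_{g_1,g'}\cc{\chi(g_1)}\chi(g')L_{g_1(a),g'(b)}$; dividing by $\pscal{u^a}{u^a}=d\alpha_i$ and renaming $g_1\to g$ gives~\eqref{eq:em05}.

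The main obstacle is the bookkeeping of complex conjugations, together with the asymmetry between the $a$- and $b$-sides of the computation: the conjugation of $\chi$ on the $b$-side sits on the ``wrong'' factor until the final substitution $g_2\to g_2^{-1}$, and this step crucially relies on the unitary-representation identity $\chi(g^{-1})=\cc{\chi(g)}$. Without it one would only obtain the less useful expression $\cc{\chi(g)\chi(g')}$, and the resulting matrix would not match the matrix $M^{(p)}$ appearing in Proposition~\ref{prop_matrix_Lp}.
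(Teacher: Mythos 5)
Your proof is correct and takes essentially the same route as the paper's: express $u^a = \frac{\abs{G}}{\abs{G_a}}\PA e^a$, use the Hermiticity and idempotency of $\PA$ (which rely, as you note, on $\cc{\chi(g^{-1})}=\chi(g)$) to get the normalisation, reduce the numerator to an entry of $\PA L\PB$, expand the projectors, and do the substitution $g_2\mapsto g_2^{-1}$ at the end. The only cosmetic difference is that you sum directly over $G\times G$ from the start via your two \lq\lq expansion formulas,\rq\rq{} whereas the paper first organizes the sums by cosets $G/G_a$, $G/G_b$ and stabilisers $G_a$, $G_b$, and then collapses them back to sums over $G$ by the same change of variables -- the two presentations are equivalent and your version is, if anything, slightly more streamlined.
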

\begin{proof}
We start by computing the norm of $u^a$:
\begin{align}
\nonumber
\pscal{u^a}{u^a} &= \frac{\abs{G}^2}{\abs{G_a}^2} \pscal{\PA e^a}{\PA e^a} 
= \frac{\abs{G}^2}{\abs{G_a}^2} \pscal{e^a}{{\PA}^*\PA  e^a}
= \frac{\abs{G}^2}{\abs{G_a}^2} \pscal{e^a}{\PA  e^a} \\
&= \frac{\abs{G}}{\abs{G_a}} \sum_{g\in G/G_a} \cc{e^a_{g(a)}} (\PA e^a)_{g(a)} 
= \frac{\abs{G}}{\abs{G_a}} (\PA )_{a a} 
= \frac{d}{\abs{G_a}} \sum_{h\in G_a} \cc{\chi(h)}
= \alpha_i d\;,
\end{align}
where we have used the fact that $\PA $ is a hermitian projector. 
Before turning to the numerator of~\eqref{eq:em05}, note that for a matrix 
$M\in\C^{n\times n}$ one has 
\begin{equation}
 \pscal{e^a}{Me^b} = \frac{\abs{G_a}}{\abs{G}}M_{ab}\;.
\end{equation} 
Therefore,
\begin{align}
\nonumber
\pscal{u^a}{Lv^b} 
&= \frac{\abs{G}^2}{\abs{G_a}\abs{G_b}} \pscal{\PA e^a}{L\PB e^b}
= \frac{\abs{G}^2}{\abs{G_a}\abs{G_b}} \pscal{e^a}{\PA L\PB e^b}\\
&= \frac{\abs{G}}{\abs{G_b}} (\PA L\PB )_{ab}\;.
\end{align}
Now we have 
\begin{align}
\nonumber
(\PA L\PB )_{ab}
&= \sum_{g\in G/G_a} \sum_{g'\in G/G_b} (\PA )_{a g(a)} L_{g(a) g'(b)}
(\PB )_{g'(b) b} \\
&= \frac{d^2}{\abs{G}^2} 
\sum_{g\in G/G_a} \sum_{g'\in G/G_b} \sum_{h\in G_a} \sum_{h'\in G_b} 
\cc{\chi(gh)} \; \cc{\chi((g')^{-1}h')} L_{g(a) g'(b)}\;.
\end{align}
Since $\cc{\chi((g')^{-1}h')}=\chi(g'(h')^{-1})$, the result follows by 
replacing first $(h')^{-1}$ by $h'$ in the sum, and then $gh$ by $g$ and $g'h'$
by $g'$.  
\end{proof}

The expression~\eqref{eq:em05} for the matrix elements can be simplified with
the help of the following identity. 

\begin{lemma}
For any $h\in G$,  
\begin{equation}
 \label{eq:em06}
 \frac{d}{\abs{G}} \sum_{g\in G} \cc{\chi(g)} \chi(hg) = \chi(h)\;.
\end{equation} 
\end{lemma}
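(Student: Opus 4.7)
The plan is to recognise this identity as a direct consequence of Schur's orthogonality relations applied to the irreducible representation $\pi = \pi^{(p)}$ itself, viewed as a $d$-dimensional representation, rather than the permutation representation we have been working with. The key is to study the operator
\[
 Q = \frac{d}{\abs{G}} \sum_{g \in G} \cc{\chi(g)} \pi(g) \in \C^{d \times d}.
\]
First I would show that $Q$ commutes with $\pi(h)$ for every $h\in G$: indeed, $\pi(h) Q \pi(h)^{-1} = \frac{d}{\abs{G}} \sum_g \cc{\chi(g)} \pi(hgh^{-1})$, and the substitution $g \mapsto h^{-1}gh$ together with the class-function property $\chi(h^{-1}gh) = \chi(g)$ shows that this equals $Q$.

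Since $\pi$ is irreducible, Schur's lemma then forces $Q = \kappa I_d$ for some scalar $\kappa\in\C$. Taking the trace and invoking the normalisation $\frac{1}{\abs{G}} \sum_g \abs{\chi(g)}^2 = 1$ for the character of an irreducible representation, one gets $\kappa d = \Tr(Q) = d$, so $\kappa = 1$ and
\[
 \frac{d}{\abs{G}} \sum_{g \in G} \cc{\chi(g)} \pi(g) = I_d.
\]

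With this identity in hand, the conclusion is immediate: multiplying on the left by $\pi(h)$ and taking the trace yields
\begin{align*}
 \chi(h)
 &= \Tr(\pi(h))
 = \Tr\!\biggpar{\pi(h) \cdot \frac{d}{\abs{G}} \sum_{g \in G} \cc{\chi(g)} \pi(g)} \\
 &= \frac{d}{\abs{G}} \sum_{g \in G} \cc{\chi(g)} \Tr(\pi(hg))
 = \frac{d}{\abs{G}} \sum_{g \in G} \cc{\chi(g)} \chi(hg),
\end{align*}
which is~\eqref{eq:em06}.

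The only real subtlety — and what I would flag as the main obstacle — is conceptual rather than computational: one must carefully distinguish between the projector $\Pip$ of~\eqref{eq:pgt00}, which lives in $\C^{n\times n}$ and is a genuine (non-identity) projector onto the $\pi^{(p)}$-isotypic component of the permutation representation, and the operator $Q$ above, which applies the same averaging formula to the abstract irreducible representation $\pi^{(p)}$ itself and is therefore forced by Schur's lemma to equal the $d\times d$ identity. Once this distinction is clear, the identity reduces to a one-line application of Schur's lemma plus the standard normalisation of irreducible characters.
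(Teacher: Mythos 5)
Your proof is correct, but it takes a genuinely different route from the paper's. You prove the stronger statement that $Q = \frac{d}{\abs{G}}\sum_g \cc{\chi(g)}\pi(g)$ equals the identity on $\C^d$, which you get from Schur's lemma (since $Q$ commutes with the irreducible $\pi$, it is scalar) together with the orthonormality of irreducible characters $\frac{1}{\abs{G}}\sum_g\abs{\chi(g)}^2=1$ to pin down the scalar; the identity then drops out by taking traces after multiplying by $\pi(h)$. The paper instead reuses the projector machinery it has already built: it applies the matrix-element formula \eqref{eq:pgt00a} to an auxiliary free $G$-action (so $(P_X)_{x,h(x)} = \frac{d}{\abs{G}}\cc{\chi(h)}$), computes $(P_X^2)_{x,h(x)}$ as a convolution, and uses idempotence $P_X^2 = P_X$ to extract the identity. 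The two arguments are conceptually close — idempotence of the isotypic projector on the regular representation is the same fact as $Q=I_d$ in disguise — but yours is more modular and does not require introducing the auxiliary free action, at the cost of explicitly invoking Schur's lemma and character orthonormality rather than leaning on the projector formulas the paper has already set up. The conceptual distinction you flag between $\Pip$ (a genuine non-identity projector in $\C^{n\times n}$) and $Q$ (forced to be $I_d$ in $\C^{d\times d}$) is exactly right and worth keeping.
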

\begin{proof}
Let $G$ act on a set $X$ such that $G_x=\set{\id}$ for all $x\in X$.
By~\eqref{eq:pgt00a} we have
\begin{equation}
 (P_X)_{x,h(x)} = \frac{d}{\abs{G}} \cc{\chi(h)}\;.
\end{equation} 
This implies 
\begin{equation}
 (P_X^2)_{x h(x)} = \sum_{g\in G} (P_X)_{x g(x)}(P_X)_{g(x) h(x)} 
 = \frac{d^2}{\abs{G}^2} \sum_{g\in G} \cc{\chi(g)}\chi(h^{-1}g)\;.
\end{equation} 
Since $P_X$ is a projector, the two above expressions are equal. 
\end{proof}

\begin{cor}
The expressions~\eqref{eq:em05} of the matrix elements simplify to  
\begin{equation}
 \label{eq:em07} 
 \frac{\pscal{u^a}{Lv^b}}{\pscal{u^a}{u^a}} = 
 \frac{1}{\alpha_i\abs{G_b}}
 \sum_{g\in G} 
 \chi(g) L_{a g(b)}\;.
\end{equation} 
\end{cor}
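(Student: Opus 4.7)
The plan is to reduce the double sum in~\eqref{eq:em05} to a single sum by exploiting the $G$-invariance of $L$ together with the identity~\eqref{eq:em06}.

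First I would use the hypothesis~\eqref{eq:sym02} that $\pi(g)L=L\pi(g)$ for every $g\in G$, which is equivalent to $L_{g(a)g'(b)} = L_{a,\,g^{-1}g'(b)}$ for all $g,g'\in G$ and $a,b\in\cX$. Substituting this into~\eqref{eq:em05} and performing the change of summation variable $h=g^{-1}g'$ (so that $g'=gh$ and $h$ still ranges over $G$), the double sum becomes
\begin{equation*}
 \sum_{g\in G}\sum_{g'\in G} \cc{\chi(g)}\chi(g') L_{g(a)g'(b)}
 = \sum_{h\in G} L_{a\,h(b)} \sum_{g\in G} \cc{\chi(g)}\chi(gh)\,,
\end{equation*}
which decouples the $g$-sum from the $h$-sum.

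Next I would observe that the inner sum can be simplified via the previous lemma. Since $\chi$ is the trace of a representation, it is a class function, and in particular $\chi(gh)=\chi(hg)$ for all $g,h\in G$. Therefore
\begin{equation*}
 \frac{d}{\abs{G}}\sum_{g\in G} \cc{\chi(g)}\chi(gh) = \frac{d}{\abs{G}}\sum_{g\in G} \cc{\chi(g)}\chi(hg) = \chi(h)
\end{equation*}
by~\eqref{eq:em06}. Inserting this into the decoupled expression and combining with the prefactor $d/(\alpha_i\abs{G}\abs{G_b})$ from~\eqref{eq:em05} yields
\begin{equation*}
 \frac{\pscal{u^a}{Lv^b}}{\pscal{u^a}{u^a}}
 = \frac{1}{\alpha_i\abs{G_b}} \sum_{h\in G} \chi(h) L_{a\,h(b)}\,,
\end{equation*}
which is precisely~\eqref{eq:em07} after relabelling $h$ as $g$.

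There is no real obstacle here: the statement is essentially bookkeeping, and the two ingredients needed ($G$-invariance of $L$ and the character identity~\eqref{eq:em06}, together with the fact that $\chi(gh)=\chi(hg)$) have already been established. The only small point to be careful about is the order of the arguments of $\chi$ in applying~\eqref{eq:em06}; invoking the class-function property resolves this immediately.
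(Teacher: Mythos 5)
Your proof is correct and follows essentially the same route as the paper: substitute $g'=gh$, use $G$-invariance to reduce $L_{g(a)gh(b)}$ to $L_{ah(b)}$, invoke the class-function property $\chi(gh)=\chi(hg)$, and apply the identity~\eqref{eq:em06}.
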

\begin{proof}
This follows by setting $g'=gh$ in~\eqref{eq:em05}, using $L_{g(a)gh(b)} =
L_{ah(b)}$ and applying the lemma. This is possible since
$\chi(gh)=\Tr(\pi(g)\pi(h))=\Tr(\pi(h)\pi(g))=\chi(hg)$. 
\end{proof}

By the non-degeneracy Assumption~\ref{assump_sym_nondeg}, the sum
in~\eqref{eq:em07} will be dominated by a few terms only. Using this, 
general matrix elements of $L$ can be rewritten as follows. 

\begin{prop}
Let $A$ and $B$ be two different orbits, and assume that $a\in A$ and $b\in B$
are such that $h_{ab}=h^*(A,B)$, the minimal exponent for transitions from $A$
to $B$. Then for any $h_1, h_2\in G$, 
\begin{equation}
 \label{eq:em08} 
  \frac{\pscal{u^{h_1(a)}}{Lv^{h_2(b)}}}{\pscal{u^{h_1(a)}}{u^{h_1(a)}}} = 
 \frac{L_{ab}}{\alpha_i\abs{G_b}} \sum_{g\in G_aG_b} \chi(h_1 g h_2^{-1})
  [1+\Order{\e^{-\theta/\eps}}]\;.
\end{equation} 
Furthermore, elements of diagonal blocks can be written as 
\begin{equation}
 \label{eq:em10} 
  \frac{\pscal{u^{h_1(a)}}{Lu^{h_2(a)}}}{\pscal{u^{h_1(a)}}{u^{h_1(a)}}} = 
 \frac{1}{\alpha_i\abs{G_a}} \sum_{g\in G_a} 
  \biggbrak{\chi(h_1 g h_2^{-1})L_{aa} + \sum_{k\in G/G_a\setminus G_a}
\chi(h_1kgh_2^{-1})L_{ak(a)}}\;.
\end{equation} 
\end{prop}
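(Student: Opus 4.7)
The plan is to start from the corollary just proved, namely the identity
\begin{equation*}
\frac{\pscal{u^a}{Lv^b}}{\pscal{u^a}{u^a}} = \frac{1}{\alpha_i\abs{G_b}} \sum_{g\in G} \chi(g) L_{ag(b)}\;,
\end{equation*}
and to extend it from base points $a,b$ to arbitrary $h_1(a)\in A$ and $h_2(b)\in B$. Since $h_1(a)$ lies in the same orbit as $a$, the analogous derivation (with $\abs{G_{h_1(a)}}=\abs{G_a}$ and $\abs{G_{h_2(b)}}=\abs{G_b}$ because stabilisers on one orbit are conjugate) gives
\begin{equation*}
\frac{\pscal{u^{h_1(a)}}{Lv^{h_2(b)}}}{\pscal{u^{h_1(a)}}{u^{h_1(a)}}} = \frac{1}{\alpha_i\abs{G_b}} \sum_{g\in G} \chi(g)\, L_{h_1(a)\,g\,h_2(b)}\;.
\end{equation*}
The change of summation index $g=h_1 g' h_2^{-1}$ together with the $G$-invariance $L_{h_1(a),h_1 g'(b)}=L_{a,g'(b)}$ turns this into $\frac{1}{\alpha_i\abs{G_b}}\sum_{g'\in G}\chi(h_1 g' h_2^{-1})L_{a,g'(b)}$.

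For the off-diagonal statement, I then split the sum over $g'\in G$ according to whether $g'\in G_aG_b$ or not. By Lemma~\ref{lem_GaGb}, each term with $g'\in G_aG_b$ equals $L_{ab}$ exactly. For $g'\notin G_aG_b$, Assumption~\ref{assump_sym_nondeg} combined with~\eqref{eq:sym04a} forces $h_{a,g'(b)}\geqs h^*(A,B)+\theta$, so $L_{a,g'(b)}=L_{ab}\Order{\e^{-\theta/\eps}}$. Since $\chi$ is bounded by $d$ and $G$ is finite, the total contribution of these terms is $L_{ab}\Order{\e^{-\theta/\eps}}$, and factoring $L_{ab}$ out yields exactly~\eqref{eq:em08}.

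For the diagonal block formula, starting from the same rewriting
$\frac{1}{\alpha_i\abs{G_a}}\sum_{g'\in G}\chi(h_1 g' h_2^{-1})L_{a,g'(a)}$ (now with $b=a$), I decompose the sum over $g'\in G$ using the coset decomposition $G=\bigsqcup_{k\in G/G_a}kG_a$. Writing $g'=kg$ with $k$ a coset representative and $g\in G_a$: the coset $k=G_a$ (i.e.\ $g'\in G_a$) contributes $\sum_{g\in G_a}\chi(h_1 g h_2^{-1})L_{aa}$ because $g'(a)=a$; and each remaining coset $k\in G/G_a\setminus G_a$ contributes $\sum_{g\in G_a}\chi(h_1kgh_2^{-1})L_{a,k(a)}$ (the value $L_{a,k(a)}$ depends only on the coset $kG_a$). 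Combining the two pieces inside a single sum over $g\in G_a$ produces~\eqref{eq:em10}.

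The only genuine subtlety is the off-diagonal case: one must make sure that, after extracting $L_{ab}$, the relative error term is truly $\Order{\e^{-\theta/\eps}}$ regardless of whether the main sum $\sum_{g\in G_aG_b}\chi(h_1 g h_2^{-1})$ happens to vanish. This is handled by observing that the error is additive of size $L_{ab}\Order{\e^{-\theta/\eps}}$, so the notation $[1+\Order{\e^{-\theta/\eps}}]$ is to be read as an absolute estimate of the entire right-hand side. The diagonal case involves no such subtlety since the identity is exact, with all approximations deferred to the subsequent estimates~\eqref{eq:sym14}--\eqref{eq:sym18b}.
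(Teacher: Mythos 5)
Your proof is correct and follows essentially the same route as the paper's: starting from the corollary (eq.~\eqref{eq:em07}), changing variables $g'=h_1^{-1}gh_2$ using the $G$-invariance of $L$, then invoking Lemma~\ref{lem_GaGb} together with~\eqref{eq:sym04a} to isolate the dominant terms $g'\in G_aG_b$ for the off-diagonal block, and using the coset decomposition $G=\bigsqcup_{k\in G/G_a}kG_a$ while singling out $k=\id$ for the diagonal block. You are somewhat more explicit than the paper about the conjugacy of stabilisers and about the uniformity of the error bound, but the underlying argument is identical.
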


\begin{proof}
It follows from~\eqref{eq:em07} that 
\begin{align}
\nonumber
\frac{\pscal{u^{h_1(a)}}{Lv^{h_2(b)}}}{\pscal{u^{h_1(a)}}{u^{h_1(a)}}} 
&= \frac{1}{\alpha_i\abs{G_{h_2(b)}}}\sum_{g\in G} 
 \chi(g) 
 \underbrace{L_{h_1(a) \, gh_2(b)}}_{=L_{a \, h_1^{-1}gh_2(b)}} \\
&= \frac{1}{\alpha_i\abs{G_b}} \sum_{k\in G_aG_b} 
 \chi(h_1 k h_2^{-1}) L_{ab}[1+\Order{\e^{-\theta/\eps}}] \;,
\end{align}
where we have set $k=h_1^{-1}gh_2$, and used~\eqref{eq:sym04a} and
Lemma~\ref{lem_GaGb}. This proves~\eqref{eq:em08}. Relation~\eqref{eq:em10}
follows from~\eqref{eq:em07} after replacing $g\in G$ by $kg$, with $g\in
G_a$ and $k\in G/G_a$, and singling out the term $k=\id$.  
\end{proof}

Expression~\eqref{eq:em08} is equivalent to~\eqref{eq:sym15} in
Proposition~\ref{prop_matrix_Lp}, taking into account the
definition~\eqref{eq:sym10} of $c^*_{ij}$ and $m^*_i$. Particularising to
one-dimensional representations yields~\eqref{eq:sym11} and~\eqref{eq:sym09}. 

It thus remains to determine the diagonal blocks. For
one-dimensional representations, using $\chi(kg)=\pi(kg)=\pi(k)\pi(g)$ and
Lemma~\ref{lem_dim1} in~\eqref{eq:em10} shows that 
\begin{equation}
 \label{eq:em11}
L^{(p)}_{ii} := 
\frac{\pscal{u^a}{Lu^a}}{\pscal{u^a}{u^a}} 
= L_{aa} + \sum_{k\in G/G_a\setminus G_a} \pi(k)L_{ak(a)}\;.
\end{equation} 
Subtracting $L^{(0)}$ for the trivial representation from
$L^{(p)}$ proves~\eqref{eq:sym13}. Furthermore, let $\vone$ be the constant
vector with all components equal to $1$. Since $L$ is a generator, we have 
\begin{equation}
 \label{eq:em11b}
 0 = L \vone = \sum_{j=1}^m Lu_j^{(0)} 
 \qquad \Rightarrow \qquad
 0 = \sum_{j=1}^m L_{ij}^{(0)}\;,
\end{equation} 
which proves \eqref{eq:sym10b}. 

Finally let $a^*$ be such that $h(a,a^*)=\inf_b
h(a,b)$. We distinguish two cases:

\begin{enum}
\item 	Case $a^*\notin A$. Then the right-hand side of~\eqref{eq:em11} is
dominated by the first term, and we have
$L^{(p)}_{ii}=L_{aa}[1+\Order{\e^{-\theta/\eps}}]$. The sum inside the brackets
in~\eqref{eq:em10} is also dominated by the first term, which implies the first
lines in~\eqref{eq:sym17} and in~\eqref{eq:sym14}. 

\item 	Case $a^*=k_0(a)\in A$. Relation~\eqref{eq:sym10d} implies that
$L_{ii}^{(0)}$ is negligible with respect to $L_{aa^*}$, and thus  
\begin{equation}
 \label{eq:em12}
 L_{aa} = - \sum_{k\in G/G_a\setminus G_a} L_{ak(a)}
[1+\Order{\e^{-\theta/\eps}}]\;.
\end{equation} 
Thus for one-dimensional representations, we obtain from~\eqref{eq:em11} that 
\begin{equation}
 \label{eq:em13}
L^{(p)}_{ii} := 
\frac{\pscal{u^a}{Lu^a}}{\pscal{u^a}{u^a}} 
= - \sum_{k\in G/G_a\setminus G_a}
(1-\pi(k))L_{ak(a)}[1+\Order{\e^{-\theta/\eps}}]\;.
\end{equation} 
The sum is dominated by $k=k_0$ and $k=k_0^{-1}$, 
which implies the last two lines of~\eqref{eq:sym14}.  
For general representations, we obtain from~\eqref{eq:em10} that 
\begin{equation}
 \label{eq:em14} 
  \frac{\pscal{u^{h_1(a)}}{Lu^{h_2(a)}}}{\pscal{u^{h_1(a)}}{u^{h_1(a)}}} = 
 -\frac{1+\Order{\e^{-\theta/\eps}}}{\alpha_i\abs{G_a}} \sum_{g\in G_a} 
  \sum_{k\in G/G_a\setminus G_a}
\bigbrak{\chi(h_1 g h_2^{-1}) - \chi(h_1kgh_2^{-1})}L_{ak(a)}\;,
\end{equation} 
which implies the second line in~\eqref{eq:sym17}.
\end{enum}


\section{Proofs -- Estimating eigenvalues}
\label{sec_proofT}


\subsection{Block-triangularisation}
\label{ssec_triang} 

We consider in this section a generator $L\in\R^{n\times n}$ with matrix
elements $L_{ij}=\e^{-h_{ij}/\eps}$, satisfying
Assumption~\ref{assump_asym_meta}
on existence of a metastable hierarchy. In this section, we have incorporated
the prefactors in the exponent, i.e., we write $h_{ij}$ instead of $h_{ij} -
\eps\log(c_{ij}/m_i)$ and $V_i$ instead of $V_i+\eps\log(m_i)$. 

In addition, we assume the \emph{reversibility condition for minimal paths} 
\begin{equation}
 \label{eq:bti01}
 V_i + H(i,j) = V_j + H(j,i) + \Order{\eps\e^{-\theta/\eps}}
 \qquad
 \forall i, j \in \set{2,\dots,n}\;.
\end{equation} 
If the reversibility assumption~\eqref{meta02a} holds, then~\eqref{eq:bti01} is
satisfied. However~\eqref{eq:bti01} is slightly weaker, because it only concerns
minimal transition paths. We do not assume reversibility for site $1$, since
this will allow us to cover situations associated with nontrivial
representations. Thus the first row of $L$ may be identically zero, making $1$
an absorbing state. 

Our aim is to construct a linear change of variables transforming $L$ into a
triangular matrix. The change of variables is obtained by combining $n-1$
elementary transformations to block-triangular form. 
Given some $1\leqs m<n$, we write $L$ in the form 
\begin{equation}
 \label{eq:bti02} 
 L=\begin{pmatrix}
L^{11}&L^{12} \\
L^{21}&A
\end{pmatrix}\;,
\end{equation} 
with blocks $L^{11}\in \R^{(n-m)\times (n-m)}$, 
$A\in \R^{m\times m}$, 
$L^{12}\in \R^{(n-m)\times m}$ and 
$L^{21}\in \R^{m\times (n-m)}$, and we assume 
$\det(A)\neq 0$.
We would like to construct matrices $S, T\in\R^{n\times n}$ satisfying 
\begin{equation}
 \label{eq:bti03}
 LS=ST
\end{equation} 
where $T$ is block-triangular. More precisely, we impose that 
\begin{equation}
 \label{eq:bti04}
S=\begin{pmatrix}
\one&S^{12} \\
0&\one
\end{pmatrix}
\qquad \text{and} \qquad
T=\begin{pmatrix}
T^{11}&0 \\
T^{21}&\tilde{A}
\end{pmatrix}\;,
\end{equation} 
with blocks of the same dimensions as the blocks of $L$.
Plugging~\eqref{eq:bti04} into~\eqref{eq:bti03} yields the relations 
\begin{align}
\nonumber
T^{11} &= L^{11} - S^{12} L^{21}\;, \\
\label{eq:bti05} 
\tilde{A} &= A + L^{21}S^{12}\;, \\
T^{21} &= L^{21}\;, 
\nonumber
\end{align}
and 
\begin{equation}
 \label{eq:bti06} 
 L^{11}S^{12} - S^{12}A - S^{12}L^{21}S^{12} + L^{12} = 0\;.
\end{equation} 
If we manage to prove that~\eqref{eq:bti06} admits a solution, then we will
have shown that $L$ is similar to the block-diagonal matrix $T$, and the
eigenvalues of $L$ are those of $T^{11}$ and $\tilde A$.  
In the sequel, the size of matrices is measured in the operator sup-norm, 
\begin{equation}
 \label{eq:bti07} 
 \norm{L} = \sup_{\norm{x}_\infty=1} \norm{Lx}_\infty\;,
 \qquad
 \norm{x}_\infty = \sup_i\abs{x_i}\;.
\end{equation}

\begin{prop}
\label{prop:S12} 
If $\norm{L^{12}A^{-1}}$ is sufficiently small, then~\eqref{eq:bti06} admits a
solution $S^{12}$, such that  $\norm{S^{12}}=\Order{\norm{L^{12}A^{-1}}}$. 
\end{prop}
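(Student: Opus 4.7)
The equation \eqref{eq:bti06} is a Riccati-type quadratic matrix equation for $S^{12}$. I would rewrite it as a fixed-point problem and apply Banach's contraction principle. Isolating the dominant ``algebraic'' term $S^{12}A$ and multiplying on the right by $A^{-1}$, the equation is equivalent to $S^{12} = F(S^{12})$, where
\begin{equation*}
F(X) := \bigl(L^{12} + L^{11}X - XL^{21}X\bigr)A^{-1}\;.
\end{equation*}
The natural candidate for the leading-order solution is $X_0 = L^{12}A^{-1}$, so I would look for a fixed point in the closed ball $B_r = \setsuch{X\in\R^{(n-m)\times m}}{\norm{X}\leqs r}$ with $r = 2\norm{L^{12}A^{-1}}$, endowed with the operator sup-norm.

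The next step is to verify the two hypotheses of Banach's theorem on $B_r$. By submultiplicativity of $\norm{\cdot}$,
\begin{equation*}
\norm{F(X)} \leqs \norm{L^{12}A^{-1}} + \norm{L^{11}}\norm{A^{-1}}\norm{X} + \norm{L^{21}}\norm{A^{-1}}\norm{X}^2\;,
\end{equation*}
so $F(B_r)\subset B_r$ as soon as $2\norm{L^{11}}\norm{A^{-1}} + 4\norm{L^{12}A^{-1}}\norm{L^{21}}\norm{A^{-1}}\leqs 1$. Writing $XL^{21}X - YL^{21}Y = (X-Y)L^{21}X + YL^{21}(X-Y)$ and taking norms gives the parallel estimate
\begin{equation*}
\norm{F(X)-F(Y)} \leqs \bigbrak{\norm{L^{11}}\norm{A^{-1}} + (\norm{X}+\norm{Y})\norm{L^{21}}\norm{A^{-1}}} \norm{X-Y}\;,
\end{equation*}
so $F$ is a contraction on $B_r$ under the same smallness conditions. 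Banach's theorem then provides a unique $S^{12}\in B_r$ solving \eqref{eq:bti06}, and the inclusion $S^{12}\in B_r$ gives directly the bound $\norm{S^{12}}\leqs 2\norm{L^{12}A^{-1}}=\Order{\norm{L^{12}A^{-1}}}$.

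The only subtlety, which I view as the \emph{main obstacle}, is that the smallness conditions above involve $\norm{L^{11}}\norm{A^{-1}}$ and $\norm{L^{21}}\norm{A^{-1}}$ in addition to $\norm{L^{12}A^{-1}}$, so one must interpret ``sufficiently small'' uniformly in $\eps$. In the application, the block $A$ will be built by peeling off states at the top of the metastable hierarchy, so under Assumption~\ref{assump_asym_meta} the smallest-modulus eigenvalue of $A$ exceeds every entry of $L^{11}$ by a factor at least $\e^{\theta/\eps}$, forcing $\norm{L^{11}}\norm{A^{-1}}=\Order{\e^{-\theta/\eps}}$; the factor $\norm{L^{21}}\norm{A^{-1}}$ is bounded since $A$ is a diagonal block of $L$ and diagonal dominance controls its exit rates. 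With these estimates, both contraction constants are genuinely $o(1)$ as $\eps\to0$, and Banach's theorem closes the argument.
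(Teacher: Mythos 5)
Your argument is correct and takes a genuinely different route from the paper's, which simply invokes the implicit function theorem for the map $f(X,(L^{11},L^{12}))=L^{11}XA^{-1}-X-XL^{21}XA^{-1}+L^{12}A^{-1}$ around $(0,0)$ (where $f(0,0)=0$ and $\partial_X f(0,0)=-\id$) and reads the bound off the derivative of the implicit function. Your Banach fixed-point version on the ball $B_r$ with $r=2\norm{L^{12}A^{-1}}$ is essentially a hands-on re-derivation of the same fact, and it buys you explicit constants; in particular it makes visible that ``sufficiently small'' secretly also requires $\norm{L^{11}}\norm{A^{-1}}$ and $r\norm{L^{21}}\norm{A^{-1}}$ to be small. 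This is not a defect: the same dependence is hidden in the paper's IFT proof, whose domain of validity depends on the fixed blocks $A$ and $L^{21}$, and your sanity check that both quantities are controlled in the triangularization algorithm --- $\norm{L^{11}}\norm{A^{-1}}=\Order{\e^{-\theta/\eps}}$ by Assumption~\ref{assump_asym_meta}, and $\norm{L^{21}}\norm{A^{-1}}$ bounded (for $m=1$ one has in fact $\norm{L^{21}}\norm{A^{-1}}=1$ exactly, since $\abs{a}=\sum_{j<n}L_{nj}=\norm{L^{21}}$ by the generator property) --- is exactly what is needed when the proposition is applied recursively. One cosmetic remark: impose strict smallness, e.g.\ $2\norm{L^{11}}\norm{A^{-1}}+4\norm{L^{12}A^{-1}}\norm{L^{21}}\norm{A^{-1}}\leqs 1/2$, so the contraction factor is uniformly below $1$.
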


\begin{proof}
For fixed blocks $A, L^{21}$, consider the function
\begin{align}
\nonumber 
f :\R^{(n-m)\times m} \times \R^{(n-m)\times n} &\to \R^{(n-m)\times m} \\
 (X, (L^{11},L^{12})) &\mapsto L^{11}XA^{-1} - X - XL^{21}XA^{-1} +
L^{12}A^{-1}\;.
 \label{eq:bti08}
\end{align} 
Then $f(0,0)=0$, and the Fr\'echet derivative of $f$ with respect to $X$ at
$(0,0)$ is given by $\partial_Xf(0,0)=-\id$. Hence the implicit-function theorem
applies, and shows the existence of a map $X^*:\R^{(n-m)\times
n}\to\R^{(n-m)\times m}$ such that $f(X^*, (L^{11},L^{12}))=0$ in a
neighbourhood of $(0,0)$. Then $S^{12}=X^*(L^{11},L^{12})$
solves~\eqref{eq:bti06}. Furthermore,
$\norm{S^{12}}=\Order{\norm{L^{12}A^{-1}}}$ follows from the expression for the
derivative of the implicit function. 
\end{proof}

The first-order Taylor expansion of $S^{12}$ reads 
\begin{equation}
 \label{eq:bti09}
 S^{12} = L^{12}A^{-1} + 
 \BigOrder{\norm{L^{12}A^{-1}} \bigbrak{\norm{L^{11}A^{-1}} +
\norm{L^{21}L^{12}A^{-2}}}}\;.
\end{equation} 
We will start by analysing the first-order approximation obtained by using
$S_0^{12} = L^{12}A^{-1}$. The resulting transformed matrix is 
\begin{equation}
 \label{eq:bti10}
T_0 = 
\begin{pmatrix}
T_0^{11} & 0 \\
L^{21} & \tilde A_0 
\end{pmatrix}
= 
\begin{pmatrix}
L^{11} - L^{12}A^{-1}L^{21} & 0 \\
L^{21} & A + L^{21}L^{12}A^{-1}
\end{pmatrix}
\end{equation} 

\begin{lemma}
The matrix  $T_0^{11}$ is still a generator.
\end{lemma}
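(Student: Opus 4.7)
The plan is to verify the two defining properties of a generator for $T_0^{11} = L^{11} - L^{12}A^{-1}L^{21}$: zero row sums, and non-negative off-diagonal entries (which, combined with zero row sums, forces the diagonal entries to be non-positive).

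For zero row sums, I would use $L\vone = 0$, which splits into the block identities $L^{11}\vone + L^{12}\vone = 0$ and $L^{21}\vone + A\vone = 0$. The second yields $A^{-1}L^{21}\vone = -\vone$, so $L^{12}A^{-1}L^{21}\vone = -L^{12}\vone$, and therefore $T_0^{11}\vone = L^{11}\vone + L^{12}\vone = 0$.

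The substantive step is to show that the off-diagonal entries of $T_0^{11}$ are non-negative. The key observation is that $A$ is a sub-Markovian generator: its off-diagonal entries coincide with those of $L$ (hence non-negative), its diagonal entries are non-positive, and its row sums $A\vone = -L^{21}\vone$ are non-positive, since $L^{21}$ has non-negative entries. Invertibility of $A$ was already assumed in the setup. From the integral representation
\begin{equation*}
 -A^{-1} = \int_0^\infty \e^{At}\,dt,
\end{equation*}
together with the fact that $\e^{At}$ is a sub-Markov semigroup with non-negative entries, we conclude that $-A^{-1}$ has non-negative entries. Probabilistically, $(-A^{-1})_{ij}$ is the expected time spent in state $j$ before leaving the $A$-block, starting from $i$. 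Since $L^{12}$ and $L^{21}$ also have non-negative entries, the product $L^{12}(-A^{-1})L^{21} = -L^{12}A^{-1}L^{21}$ has non-negative entries, and adding it to $L^{11}$ (whose off-diagonal entries are non-negative) yields the desired non-negativity for the off-diagonal of $T_0^{11}$.

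The only conceptual obstacle is recognising the Schur-complement/Feynman--Kac reduction structure and the probabilistic meaning of $-A^{-1}$ as the mean residence-time matrix of the process killed on exiting the $A$-block; once that picture is in place, everything reduces to elementary linear algebra and monotonicity of the sub-Markov semigroup. No sharp estimates or asymptotics are required here, which is consistent with the statement being exact rather than involving $\Order{\e^{-\theta/\eps}}$ corrections.
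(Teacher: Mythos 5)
Your zero-row-sum computation is word for word the paper's argument: from $L\vone=0$ one gets $L^{11}\vone+L^{12}\vone=0$ and $L^{21}\vone+A\vone=0$, hence $L^{12}A^{-1}L^{21}\vone=-L^{12}\vone=L^{11}\vone$ and $T_0^{11}\vone=0$. That part matches exactly.

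Where you differ is that you go on to verify the sign condition on the off-diagonal entries of $T_0^{11}$. The paper's proof checks only the row-sum identity and stops there, even though its definition of \lq\lq generator\rq\rq\ (non-negative off-diagonal elements, zero row sums) requires both. Your observation that $A$ is a sub-Markov generator, that $-A^{-1}=\int_0^\infty \e^{At}\,\dd t$ is entrywise non-negative (the mean residence-time matrix of the chain killed on exiting the $A$-block), and hence that $-L^{12}A^{-1}L^{21}$ is entrywise non-negative, correctly fills this in. One small point worth making explicit: the convergence of the integral needs all eigenvalues of $A$ to lie strictly in the open left half-plane, which follows from $A$ being an invertible sub-Markov generator (its spectrum lies in the closed left half-plane, and the only possible boundary eigenvalue is $0$, which invertibility excludes). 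So your proof is correct and in fact more complete than the paper's one-liner; the paper implicitly relies on the Schur-complement/M-matrix fact that you spell out.
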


\begin{proof}
The fact that $L$ is a generator implies $L^{11}\vone + L^{12}\vone = 0$ 
and $L^{21}\vone + A \vone = 0$, where $\vone$ denotes the constant vector of
the appropriate size. It follows that 
\begin{equation}
 \label{eq:bti11} 
L^{12}A^{-1}L^{21}\vone = L^{12} A^{-1} (-A\vone) = -L^{12}\vone =
L^{11}\vone\;,
\end{equation}
and thus $T_0^{11}\vone = 0$. 
\end{proof}

We will see that $T_0^{11}$ can be interpreted as the generator of a jump
process in which the sites $i>n-m$ have been \lq\lq erased\rq\rq. Our strategy
will be to show that this reduced process has the same communication heights as
the original one, and then to prove that higher-order terms in the expansion of
$S^{12}$ do not change this fact. We can then apply the same strategy to the
block $T^{11}$, and so on until the resulting matrix is block-triangular with
blocks of size $m$. The diagonal blocks of this matrix then provide the
eigenvalues of~$L$.

\subsection{The one-dimensional case}

We consider in this section the case $m=1$, which allows to cover all
one-dimensional representations. The lower-right block $A$ of $L$ is then a
real number that we denote $a$~$(=L_{nn})$, and we write $\tilde a$ instead of
$\tilde A$. 

\subsubsection*{The first-order approximation}

The matrix elements of $T_0^{11}$ are given by (c.f.~\eqref{eq:bti10}) 
\begin{equation}
 \label{eq:b1d01}
 T^0_{ij} = L_{ij} - \frac1a L_{in}L_{nj}\;, 
 \qquad
 i, j = 1, \dots n-1\;.
\end{equation} 
Assumption~\ref{assump_asym_meta} implies that there is a unique successor $k =
s(n) \in \set{1,n-1}$ such that $h_{nk}=\min_{j\in\set{1,n-1}} h_{nj}$.
Since $L$ is a generator, we have $a = -\e^{-h_{nk}/\eps}
[1+\Order{\e^{-\theta/\eps}}]$, and thus $T^0_{ij} = \e^{-\tilde
h_{ij}/\eps}$ where 
\begin{equation}
 \label{eq:b1d02}
 \tilde h_{ij} = \tilde h^0_{ij} + \Order{\eps\e^{-\theta/\eps}}
 \qquad 
 \text{with}
 \qquad
 \tilde h^0_{ij} = h_{ij} \wedge (h_{in}-h_{nk}+h_{nj})\;.
\end{equation} 
The new exponent $\tilde h^0_{ij}$ can be interpreted as the lowest cost to go
from site $i$ to site $j$, possibly visiting $n$ in between. 

We denote by $\widetilde H^0(i,j)$ the new communication height between sites
$i,j\in\set{1,\dots,n-1}$, defined in the same way as $H(i,j)$ but using $\tilde
h^0_{ij}$ instead of $h_{ij}$ ($p$-step communication heights are defined
analogously). In order to show that the new communication heights are in fact
equal to the old ones, we start by establishing a lower bound. 

\begin{lemma}
\label{lem_b1d2} 
For all $i\neq j\in\set{1,\dots,n-1}$, 
\begin{equation}
 \label{eq:b1d03}
 \tilde h^0_{ij} \geqs h_{ij} \wedge h_{inj}\;,
\end{equation} 
with equality holding if $i=k$ or $j=k$. As a consequence,
$\widetilde H^0(i,j) \geqs H(i,j)$ for these $i, j$. 
\end{lemma}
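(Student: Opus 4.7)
The plan is a three-step argument: first establish the one-step inequality by an elementary computation exploiting the minimality of $k=s(n)$, then verify the two equality cases, and finally upgrade to the bound on communication heights via a path-lifting argument.

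For the pointwise inequality I would unfold $h_{inj}=h_{in}\vee(h_{in}-h_{ni}+h_{nj})$ from the definition and compare it termwise with the through-$n$ branch $h_{in}-h_{nk}+h_{nj}$ of $\tilde h^0_{ij}$. The minimality of $k=s(n)$ gives both $h_{nk}\leqs h_{ni}$ and $h_{nk}\leqs h_{nj}$; the first yields $h_{in}-h_{nk}+h_{nj}\geqs h_{in}-h_{ni}+h_{nj}$, and the second yields $h_{in}-h_{nk}+h_{nj}\geqs h_{in}$, so the through-$n$ branch of $\tilde h^0_{ij}$ dominates both arguments of the maximum in $h_{inj}$. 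Taking the minimum with $h_{ij}$ on both sides completes the bound. The equality cases then follow by substitution: when $j=k$, $h_{nj}=h_{nk}$ makes the first lower bound tight and dominant, yielding $h_{inj}=h_{in}$; when $i=k$, $h_{ni}=h_{nk}$ makes the second lower bound tight and dominant, yielding $h_{inj}=h_{in}-h_{nk}+h_{nj}$. Either way the minimum with $h_{ij}$ matches $\tilde h^0_{ij}$.

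For the consequence, I would lift any reduced path $\gamma$ in $\set{1,\dots,n-1}$ to a path $\gamma'$ in $\set{1,\dots,n}$, replacing each edge $(k_\ell,k_{\ell+1})$ either by itself or by the detour $(k_\ell,n,k_{\ell+1})$ depending on which option realises $\tilde h^0_{k_\ell k_{\ell+1}}$. The inequality $\widetilde H^0(i,j)\geqs H(i,j)$ would then follow from $\tilde h^0_\gamma\geqs h_{\gamma'}$, which I would obtain by first checking that the reversibility identity $\tilde h^0_{ab}-\tilde h^0_{ba}=V_b-V_a$ is inherited by the reduced weights, so that the saddle-height representation~\eqref{eq:asym03a} still applies to $\tilde h^0_\gamma$; the effective saddle $V_{(a,n)}+V_{(n,b)}-V_{(n,k)}$ associated with a through-$n$ edge then dominates the true maximum saddle $V_{(a,n)}\vee V_{(n,b)}$ precisely because $V_{(n,k)}\leqs\min(V_{(a,n)},V_{(n,b)})$ by minimality of $k$. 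Minimising over $\gamma$ gives the claim.

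The main obstacle will be the third step: extending the edge-wise bound to full paths requires the correct interpretation of $\tilde h^0_\gamma$, and the cleanest route I see is through the saddle-height representation, which in turn rests on reversibility surviving the reduction. The first two steps are short algebraic manipulations, and I expect no surprises there.
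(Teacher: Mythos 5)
Your argument is correct and essentially the same as the paper's, just organised a bit differently: where you prove the pointwise inequality uniformly by showing the through-$n$ branch $h_{in}-h_{nk}+h_{nj}$ dominates both arguments of the maximum defining $h_{inj}$, the paper splits into the three cases $i=k$, $j=k$, $i\neq k\neq j$, with the generic case argued exactly as you do. One small slip in your equality discussion: you have the labels \emph{first} and \emph{second} swapped. When $j=k$ (so $h_{nj}=h_{nk}$) it is the bound $h_{in}-h_{nk}+h_{nj}\geqs h_{in}$ (the one coming from $h_{nk}\leqs h_{nj}$) that becomes tight, giving $h_{ink}=h_{in}$; when $i=k$ (so $h_{ni}=h_{nk}$) it is $h_{in}-h_{nk}+h_{nj}\geqs h_{in}-h_{ni}+h_{nj}$ (from $h_{nk}\leqs h_{ni}$) that becomes tight, giving $h_{knj}=h_{kn}-h_{nk}+h_{nj}$. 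The conclusions you draw are nonetheless the right ones. For the consequence on communication heights, the paper disposes of it in one line (\lq\lq comparing maximal heights along paths\rq\rq); your path-lifting argument via the saddle-height representation~\eqref{eq:asym03a} is exactly what is needed to make that rigorous, and your two preliminary checks are correct: the reversibility identity $\tilde h^0_{ij}-\tilde h^0_{ji}=V_j-V_i$ does survive the reduction (both branches of the minimum defining $\tilde h^0_{ij}$ are antisymmetric in this sense), and $V_{(a,n)}+V_{(n,b)}-V_{(n,k)}\geqs V_{(a,n)}\vee V_{(n,b)}$ follows from $h_{nk}\leqs h_{na}\wedge h_{nb}$.
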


\begin{proof}
Recall from Definition~\ref{def_comm_heights} that the two-step communication
height $i \to n \to j$ is given by 
$h_{inj} = h_{in}\vee(h_{in}-h_{ni}+h_{nj})$. 
We consider three cases:
\begin{itemiz}
\item 	If $i=k$, then $h_{knj} = h_{kn}-h_{nk}+h_{nj}$ because $h_{nj}>h_{nk}$,
and thus $\tilde h^0_{kj} = h_{kj}\wedge h_{knj}$. 

\item 	If $j=k$, then $h_{ink} = h_{in}$ because $h_{nk}<h_{ni}$,
and thus $\tilde h^0_{ik} = h_{ik}\wedge h_{in} = h_{ik}\wedge h_{ink}$. 

\item 	If $i\neq k\neq j$, then $h_{in} - h_{nk} + h_{nj} > h_{inj}$
because $h_{nk} < h_{ni}, h_{nj}$ and \eqref{eq:b1d03} holds.
\end{itemiz}
The consequence on communication heights follows by comparing maximal
heights along paths from $i$ to $j$. 
\end{proof}

\begin{figure}
\begin{center}
\begin{tikzpicture}[->,>=stealth',shorten >=1pt,auto,node distance=2.5cm,
  thick,main
node/.style={circle,minimum size=0.7cm,fill=blue!20,draw}]
\node at (-1.0,2.8) {Rule 1};
\node[main node] (n) at (0,0) {$n$};
\node[main node] (i) at (-1.2,1.0) {$i$};
\node[main node] (j) at (1.2,1.0) {$j$};
\draw (i)  -- node[below] {$h_{ij}$} (j);
\draw (i) edge[color=red,bend left] node {$\tilde h^0_{ij}$} (j);
\end{tikzpicture}
\hspace{10mm}
\begin{tikzpicture}[->,>=stealth',shorten >=1pt,auto,node distance=2.5cm,
  thick,main
node/.style={circle,minimum size=0.7cm,fill=blue!20,draw}]
\node at (0.1,2.8) {Rule 2};
\node[main node] (n) at (0,0) {$n$};
\node[main node] (k) at (0.0,2.0) {$k$};
\node[main node] (j) at (1.2,1.0) {$j$};
\draw (k) --(n);
\draw (n) --(j);
\draw (k) edge[color=red,bend left] node {$\tilde h^0_{kj}$} (j);
\end{tikzpicture}
\hspace{10mm}
\begin{tikzpicture}[->,>=stealth',shorten >=1pt,auto,node distance=2.5cm,
  thick,main
node/.style={circle,minimum size=0.7cm,fill=blue!20,draw}]
\node at (-1.0,2.8) {Rule 3};
\node[main node] (n) at (0,0) {$n$};
\node[main node] (k) at (0.0,2.0) {$k$};
\node[main node] (i) at (-1.2,1.0) {$i$};
\draw (i) --(n);
\draw (n) --(k);
\draw (i) edge[color=red,bend left] node {$\tilde h^0_{ik}$} (k);
\end{tikzpicture}
\hspace{10mm}
\begin{tikzpicture}[->,>=stealth',shorten >=1pt,auto,node distance=2.5cm,
  thick,main
node/.style={circle,minimum size=0.7cm,fill=blue!20,draw}]
\node at (-1.0,2.8) {Rule 4};
\node[main node] (n) at (0,0) {$n$};
\node[main node] (k) at (0.0,2.0) {$k$};
\node[main node] (i) at (-1.2,1.0) {$i$};
\node[main node] (j) at (1.2,1.0) {$j$};
\draw (i) --(n);
\draw (n) --(j);
\draw[dashed] (n) --(k);
\draw[color=red] (i) to[out=110,in=150] node {$\gamma_1$} (k);
\draw[color=red] (k) to[out=30,in=70] node {$\gamma_2$} (j);
\end{tikzpicture}
\end{center}
\vspace{-3mm}
\caption[]{Replacement rules for minimal paths.}
\label{fig:minimal_paths} 
\end{figure}
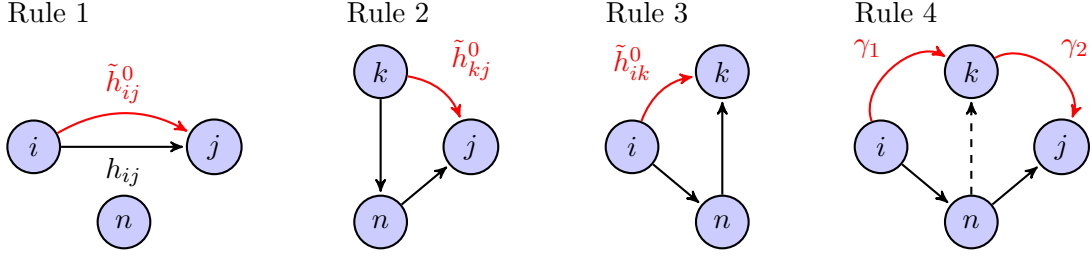

\begin{prop}
\label{prop:b1d1} 
For all $i\neq j\in\set{1,\dots,n-1}$ and sufficiently small $\eps$, 
\begin{equation}
 \label{eq:b1d04}
 \widetilde H^0(i,j) = H(i,j) + \Order{\eps\e^{-\theta/\eps}}\;. 
\end{equation} 
\end{prop}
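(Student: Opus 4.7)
The plan is to combine the lower bound $\widetilde H^0(i,j) \geqs H(i,j)$, which extends the consequence of Lemma~\ref{lem_b1d2}, with a matching upper bound obtained by transporting minimal paths between the two graphs. For the lower bound, I would observe that each reduced edge $a\to b$ with cost $\tilde h^0_{ab}$ can be ``unfolded'' either as the direct edge $(a,b)$, when $\tilde h^0_{ab}=h_{ab}$, or as the two-step passage $a\to n\to b$, when the second term in~\eqref{eq:b1d02} is smaller. Converting edge weights to saddle heights $V_{(a,b)}=V_a+h_{ab}$ using the approximate reversibility~\eqref{eq:bti01}, the identity $V_a+(h_{an}-h_{nk}+h_{nb})=V_{(a,n)}+V_{(n,b)}-V_{(n,k)}$ combined with $V_{(n,k)}\leqs V_{(n,a)}\wedge V_{(n,b)}$ (by definition of the successor $k=s(n)$) shows that the unfolded path has maximal saddle height at most that of the reduced edge. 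Applying this edge by edge yields $H(i,j)\leqs \widetilde H^0(i,j)$.

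For the matching upper bound, I would start from a minimal path $\gamma=(i,k_1,\dots,k_p,j)$ realising $H(i,j)$ in the original graph and construct a path $\tilde\gamma$ in the reduced graph. If $\gamma$ avoids $n$, the same vertices define a reduced path with $\tilde h^0 \leqs h$ edge by edge, so $\tilde h_{\tilde\gamma}\leqs h_\gamma$. Each visit $k_{l-1}\to n\to k_{l+1}$ is otherwise treated according to the replacement rules of Figure~\ref{fig:minimal_paths}: if $k_{l-1}=k$ or $k_{l+1}=k$ (Rules~2 and~3), the two-step segment is replaced by the single reduced edge; if neither neighbour equals $k$ (Rule~4), it is replaced by the two-step reduced path $k_{l-1}\to k\to k_{l+1}$ through the successor. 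The cost in each case is controlled by the same identity as above: in Rules~2 and~3 the replacement saddle reduces to $V_{(n,k_{l+1})}$ or $V_{(k_{l-1},n)}$, both already present along $\gamma$, while in Rule~4 the two new saddles are bounded respectively by $V_{(k_{l-1},n)}$ and $V_{(n,k_{l+1})}$, using $V_{(n,k)}\leqs V_{(n,k_{l\pm 1})}$. Aggregating these local comparisons, $\tilde\gamma$ has the same maximal saddle height as $\gamma$ up to the error $\Order{\eps\e^{-\theta/\eps}}$ coming from~\eqref{eq:bti01}, yielding the claimed upper bound.

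The main obstacle is bookkeeping the reversibility error through the case analysis, and handling degenerate configurations where the replacement introduces duplicate vertices in $\tilde\gamma$ (which only lowers the cost and can be collapsed without affecting the argument). The $\theta$-gap from Assumption~\ref{assump_asym_meta} guarantees that distinct relevant saddle heights are well separated, so the minimum in~\eqref{eq:b1d02} is always attained strictly and the $\Order{\eps\e^{-\theta/\eps}}$ error does not accumulate when finitely many edge replacements are composed along a single path.
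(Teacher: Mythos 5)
Your plan matches the structure of the paper's proof: lower bound via comparing reduced edges with unfolded two-step passages (the content of Lemma~\ref{lem_b1d2}), and upper bound via the replacement rules of Figure~\ref{fig:minimal_paths}. Two remarks are worth making.

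First, a genuine concern. You phrase both halves in terms of the potential-landscape picture, writing $V_{(a,b)}$ for the height of the saddle of an edge and comparing ``maximal saddle heights along paths''. This picture — and in particular the identity~\eqref{eq:asym03a} that lets you convert a path cost into a running max of saddle heights — is well-defined only under edge-level reversibility $V_a+h_{ab}=V_b+h_{ba}$ for \emph{every} edge, i.e.\ Assumption~\eqref{meta02a}. But Section~\ref{sec_proofT} is explicitly set up under the weaker Assumption~\eqref{eq:bti01}, which asserts approximate reversibility only along minimal paths, precisely so that the result covers the cemetery-state situation of nontrivial representations where state $1$ is absorbing and full reversibility fails. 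The paper's Lemma~\ref{lem_b1d2} is careful about this: its proof of $\tilde h^0_{ij}\geqs h_{ij}\wedge h_{inj}$ is purely combinatorial in the $h$'s and makes no appeal to saddle heights; likewise, the paper's treatment of Rule~4 manipulates $h$-expressions directly and invokes~\eqref{eq:bti01} only on the specific communication heights $H(i,k)$, $H(k,i)$ and the edge $n\leftrightarrow k$, which are all realised by minimal paths. Your version would need to be unpacked back into $h$-arithmetic to confirm that reversibility is only being used where~\eqref{eq:bti01} actually applies; as written, the saddle-height bookkeeping silently assumes $V_{(a,b)}=V_{(b,a)}$ for arbitrary edges.

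Second, a point in your favour: you replace the segment $i\to n\to j$ in Rule~4 by the two-step reduced path $i\to k\to j$ rather than by a concatenation of minimal paths. Although the paper's statement of Rule~4 speaks of minimal paths $\gamma_1:i\to k$ and $\gamma_2:k\to j$, its proof in fact only computes the reduced two-step communication height $\tilde h^0_{ikj}$ and shows it equals $h_{inj}+\Order{\eps\e^{-\theta/\eps}}$, so your formulation agrees with what the paper actually verifies. The remark about errors not accumulating along a finite path is also correct and makes explicit something the paper leaves implicit.

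In sum, the route is the same; the one thing you would need to fix before this could replace the paper's proof is to separate the purely combinatorial inequalities (which need no reversibility at all, as in Lemma~\ref{lem_b1d2}) from the few places where~\eqref{eq:bti01} is genuinely invoked, rather than applying the landscape picture globally.
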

\begin{proof}
Let $\gamma$ be a minimal path between two sites $i_0$ and $j_0$. In view of
Lemma~\ref{lem_b1d2}, it is sufficient to construct a path $\tilde\gamma$ from 
$i_0$ to $j_0$, which does not include $n$, such that $\tilde
h^0_{\tilde\gamma} = h_\gamma$. This new path is obtained by applying the
following replacement rules (see \figref{fig:minimal_paths}):
\begin{enum}
\item 	leave as is each segment $i\to j$ with $i,j \neq k, n$;
\item 	replace any segment $k\to n\to j$ with $j\neq k, n$ by $k\to j$;
\item 	replace any segment $i\to n\to k$ with $i\neq k, n$ by $i\to k$;
\item 	replace any segment $i\to n\to j$ with $i, j\neq k, n$ by the
concatenation of a minimal path $\gamma_1:i\to k$ and a minimal path 
$\gamma_2:k\to j$. If one of these paths contains $n$, apply rules 2.\ or 3.
\end{enum}
It is sufficient to show that each of these modifications leaves invariant the
local communication height. 
\begin{enum}
\item 	Segment $i\to j$ with $i,j \neq k, n$: $h_{ij}\leqs h_{inj}$ because the
path is minimal; thus either $h_{ij} \leqs h_{in}$ and thus $\tilde h^0_{ij}
= h_{ij}\wedge(h_{in}-h_{nk}+h_{nj})=h_{ij}$ because $h_{nk} < h_{nj}$. Or 
$h_{ij} \leqs h_{in}-h_{ni}+h_{nj} < h_{in}-h_{nk}+h_{nj}$ and thus again 
$\tilde h^0_{ij} =h_{ij}$.

\item 	Segment $k\to n\to j$ with $j\neq k, n$: 
Then $\tilde h^0_{kj} = h_{kj} \wedge (h_{kn}-h_{nk}+h_{nj}) =
h_{kn}-h_{nk}+h_{nj}$ because the path $k\to n\to j$ is minimal, and we have
seen in the previous lemma that this is equal to $h_{knj}$. Thus $\tilde
h^0_{kj} =h_{knj}$.

\item 	Segment $i\to n\to k$ with $i\neq k, n$:
Here $\tilde h^0_{ik} = h_{ik} \wedge h_{in}$. We have seen in the previous
lemma that $h_{in}=h_{ink}$, which must be smaller than $h_{ik}$ because the
path is minimal. We conclude that $\tilde h^0_{ik} =h_{ink}$.

\item 	Segment $i\to n\to j$ with $i, j\neq k, n$:
In this case we have $\tilde h^0_{ikj} = h_{inj} + 
\Order{\eps\e^{-\theta/\eps}}$. Indeed, 
\begin{itemiz}
\item 	By minimality of the path, 
$h_{in} \leqs H(i,k)\vee(H(i,k)-H(k,i)+h_{kn})$.
The reversibility assumption~\eqref{eq:bti01} 
and the minimality of $\gamma_1$ and $n\to k$ yield  
\begin{align}
 \nonumber 
 H(i,k)-H(k,i)+h_{kn} 
 &= V_n - V_i + h_{nk}  + \Order{\eps\e^{-\theta/\eps}} \\
 \nonumber 
 &= h_{in}-h_{ni}+h_{nk}  + \Order{\eps\e^{-\theta/\eps}} \\
 &< h_{in}  + \Order{\eps\e^{-\theta/\eps}}
 \label{eq:b1d05a} 
\end{align}
and thus $h_{in} \leqs H(i,k) \leqs h_{ik}$ for sufficiently small $\eps$. This
implies 
\begin{equation}
 \label{eq:b1d05}
 \tilde h^0_{ik} = h_{ik} \wedge h_{in} = h_{in}\;.
\end{equation} 

\item 	Minimality also yields $h_{nj} \leqs  
h_{nk} \vee (h_{nk} - h_{kn} + H(k,j)) = h_{nk} - h_{kn} + H(k,j)$, where we
have used $h_{nj} > h_{nk}$. Thus $h_{kn}-h_{nk}+h_{nj}\leqs H(k,j)\leqs
h_{kj}$, which implies 
\begin{equation}
 \label{eq:b1d06}
 \tilde h^0_{kj} = h_{kn}-h_{nk}+h_{nj}\;.
\end{equation} 

\item 	By assumption~\eqref{eq:bti01}, 
\begin{align}
\nonumber
h_{kn}-h_{nk}+h_{ni} 
&= H(k,i)-H(i,k)+h_{in}  + \Order{\eps\e^{-\theta/\eps}} \\ 
\nonumber
&\leqs H(k,i) + \Order{\eps\e^{-\theta/\eps}} \\
&\leqs h_{ki} + \Order{\eps\e^{-\theta/\eps}}\;, 
 \label{eq:b1d06a}
\end{align}
since $h_{in}\leqs H(i,k)$, so that 
\begin{equation}
 \label{eq:b1d07}
 \tilde h^0_{ki} = h_{kn}-h_{nk}+h_{ni} + \Order{\eps\e^{-\theta/\eps}}\;.
\end{equation} 
\end{itemiz}
Combining~\eqref{eq:b1d05}, \eqref{eq:b1d06} and~\eqref{eq:b1d07}, we obtain 
$\tilde h^0_{ikj} = h_{inj} + \Order{\eps\e^{-\theta/\eps}}$, concluding the
proof. 
\qed
\end{enum}
\renewcommand{\qed}{}
\end{proof}

\subsubsection*{The full expansion}

It remains to extend the previous results from the first-order approximation
$S^{12}_0$ to the exact solution $S^{12}$. 

\begin{prop}
\label{prop:b1d2} 
For sufficiently small $\eps$, the matrix $S^{12}$ satisfying~\eqref{eq:bti06}
is given by the convergent series 
\begin{equation}
\label{eq:b1d08} 
 S^{12} = \sum_{p=0}^\infty \frac{1}{a^{p+1}} \bigpar{L^{11}}^p L^{12} 
 \bigbrak{1+\Order{\e^{-\theta/\eps}}}\;.
\end{equation} 
\end{prop}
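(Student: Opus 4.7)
Rearranging~\eqref{eq:bti06} and exploiting that $a\in\R$ and $L^{21}S^{12}\in\R$ in the case $m=1$, I would rewrite the equation as
\begin{equation*}
(a\one - L^{11})\,S^{12} = L^{12} - S^{12}(L^{21}S^{12}),
\end{equation*}
invert the linear operator $a\one - L^{11}$ by a Neumann series, and treat the scalar nonlinear source perturbatively. Existence and uniqueness of $S^{12}$ are already supplied by Proposition~\ref{prop:S12}, so only identification with the stated series remains.

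The key estimate is $\norm{L^{11}/a}_\infty = \Order{\e^{-\theta/\eps}}$. Applying Assumption~\ref{assump_asym_meta} at level $k=n$, one has
$h_{ij}\geqs H(i,j)\geqs H(i,\cM_n\setminus\set{i})\geqs h_{nk}+\theta$
for every $i<n$ and every $j\neq i$, including $j=n$; the same bound therefore applies to $h^*_i = \min_{j\neq i}h_{ij}$ controlling the diagonal entry $L_{ii}$. Since $\abs{a}=\e^{-h_{nk}/\eps}[1+\Order{\e^{-\theta/\eps}}]$, every entry of $L^{11}/a$ has absolute value $\Order{\e^{-\theta/\eps}}$, so the Neumann series
\begin{equation*}
\hat S := \sum_{p\geqs 0}\frac{(L^{11})^p}{a^{p+1}}L^{12} = (a\one-L^{11})^{-1}L^{12}
\end{equation*}
converges absolutely, with successive terms decreasing geometrically at rate $\e^{-\theta/\eps}$. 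A direct telescoping computation shows $(a\one-L^{11})\hat S = L^{12}$ exactly, so $\hat S$ satisfies the linear part of~\eqref{eq:bti06}.

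To close the argument, I would show the nonlinear source is also $\Order{\e^{-\theta/\eps}}$ relative to $\hat S$. Applying Assumption~\ref{assump_asym_meta} to the pair $(k,n)$ together with the reversibility identity~\eqref{eq:bti01} (which gives $H(k,n)=h_{nk}+V_n-V_k$) yields $V_n-V_k\geqs\theta$. Hence $\abs{L^{21}L^{12}/a^2}=\Order{\e^{-\theta/\eps}}$, and in turn $L^{21}\hat S/a=\Order{\e^{-\theta/\eps}}$, so that $\hat S\,(L^{21}\hat S)$ acts as a source of multiplicative size $\e^{-\theta/\eps}$ relative to $\hat S$. Combining this with the uniqueness supplied by Proposition~\ref{prop:S12} yields $S^{12}=\hat S\,[1+\Order{\e^{-\theta/\eps}}]$, which is exactly~\eqref{eq:b1d08}.

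The main obstacle is conceptual rather than computational: one must extract \emph{two} independent $\theta$-gaps from Assumption~\ref{assump_asym_meta}, namely the gap between $h_{nk}$ and all other transition heights $h_{ij}$ with $i<n$ (for the Neumann convergence), and the gap $V_n-V_k\geqs\theta$ obtained via the hierarchy and the reversibility relation~\eqref{eq:bti01} (for the nonlinear residue). Neither follows from a crude operator-norm bound, and both rely essentially on the uniqueness clause of Assumption~\ref{assump_asym_meta} that fixes the minimal edge $e^*(n)$ and separates it from all competing configurations.
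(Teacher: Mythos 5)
Your proposal is correct and follows essentially the same strategy as the paper's own proof: invoke Proposition~\ref{prop:S12} for existence and the a priori size of $S^{12}$, isolate the linear part $(a\one-L^{11})S^{12}=L^{12}$, expand $(a\one-L^{11})^{-1}$ as a Neumann series using $\norm{L^{11}/a}_\infty=\Order{\e^{-\theta/\eps}}$, and absorb the scalar nonlinear term $L^{21}S^{12}/a$ into the relative error. The only notable departure is your detour through the inequality $V_n-V_k\geqs\theta$, which is not actually needed: the bound $\norm{L^{12}/a}_\infty=\Order{\e^{-\theta/\eps}}$ that you already extracted from Assumption~\ref{assump_asym_meta} (via $h_{in}\geqs h_{nk}+\theta$) combined with the trivial $\norm{L^{21}/a}_\infty=\Order{1}$ gives $\abs{L^{21}\hat S/a}=\Order{\e^{-\theta/\eps}}$ directly, which is exactly the single small-parameter estimate the paper uses to close the fixed-point argument.
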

\begin{proof}
First observe that by Assumption~\ref{assump_asym_meta}, $\norm{L^{21}A^{-1}} =
\abs{a}^{-1}\norm{L^{21}} = \Order{\e^{-\theta/\eps}}$. Thus by
Proposition~\ref{prop:S12}, \eqref{eq:bti06} admits a solution $S^{12}$ of
order $\e^{-\theta/\eps}$. This solution satisfies 
\begin{equation}
 \label{eq:b1d09} 
 S^{12} = \frac1a L^{12} + \frac1a L^{11}S^{12} - 
 \frac{S^{12}L^{21}}{a} S^{12}\;.
\end{equation} 
Note that $S^{12}L^{21}/a$ is a scalar of order $\e^{-\theta/\eps}$. It follows
that 
\begin{equation}
 \label{eq:b1d10}
 S^{12} = \frac1a \Bigbrak{\one - \frac1a L^{11}}^{-1} L^{12} 
 \bigbrak{1+\Order{\e^{-\theta/\eps}}}\;,
\end{equation} 
and the conclusion follows by writing the inverse as a geometric series. 
\end{proof}

Plugging~\eqref{eq:b1d08} into~\eqref{eq:bti05}, we obtain 
\begin{equation}
 \label{eq:b1d11}
 T^{11} = L^{11} + \sum_{p=0}^\infty \frac{1}{a^{p+1}} \bigpar{L^{11}}^p L^{12}
L^{21} \bigbrak{1+\Order{\e^{-\theta/\eps}}}\;.
\end{equation} 
$L^{11}$ and the term $p=0$ correspond to the first-order approximation
$T^{11}_0$. It follows that the matrix elements of $T^{11}$ are of the form
$\smash{\e^{-\tilde h_{ij}/\eps}}$ where
\begin{equation}
 \label{eq:b1d12}
 \tilde h_{ij} = \tilde h^0_{ij} \wedge 
 \inf_{\substack{p\geqs 1 \\ 1\leqs l_1,\dots, l_p\leqs{n-1}}}
 \Bigpar{h_{il_1}+h_{l_1l_2}+\dots+h_{l_pn}+h_{nj} - (p+1)h_{nk}}
 + \Order{\eps\e^{-\theta/\eps}}\;.
\end{equation}
In order to control the remainder terms, we establish the following estimate.

\begin{lemma}
\label{lem_b1d3}
For any $p\geqs 1$, any $i, l_1, \dots, l_p \in\set{1,\dots,n-1}$ and 
$j\in\set{1,\dots,n}$,
\begin{align}
 \label{eq:b1d13a}
 h_{il_1} + h_{l_1l_2} + \dots + h_{l_pj} - p h_{nk} 
 &\geqs h_{il_1\dots l_pj} + p\theta\;, \\
 h_{il_1} + h_{l_1l_2} + \dots + h_{l_pn} + h_{nj} - (p+1) h_{nk} 
 &\geqs h_{il_1\dots l_pnj} + p\theta\;.
 \label{eq:b1d13b}
\end{align} 
\end{lemma}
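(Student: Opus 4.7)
The plan is to represent each communication height as a maximum of partial alternating sums, and then to check, segment by segment, that subtracting any such partial sum from the left-hand side leaves exactly $p$ (resp.\ $p+1$) edge weights which the metastable hierarchy forces to be large compared to $h_{nk}$.

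Concretely, setting $l_0=i$ and $l_{p+1}=j$, an induction on $p$ using \eqref{eq:asym01} shows that
\begin{equation*}
 h_{il_1\ldots l_pj} \;=\; \max_{1\leqs q\leqs p+1} U_q\;, \qquad
 U_q \;=\; h_{l_0l_1} + \sum_{r=1}^{q-1}\bigl(h_{l_rl_{r+1}} - h_{l_rl_{r-1}}\bigr)\;.
\end{equation*}
Writing $S=\sum_{r=0}^{p} h_{l_rl_{r+1}}$ for the left-hand side of \eqref{eq:b1d13a} (minus the $-ph_{nk}$), a direct algebraic cancellation yields
\begin{equation*}
 S - U_q \;=\; \sum_{r=q}^{p} h_{l_rl_{r+1}} \;+\; \sum_{r=1}^{q-1} h_{l_rl_{r-1}}\;,
\end{equation*}
which is a sum of exactly $p$ terms; crucially, in every one of them the \emph{first} index $l_a$ lies in $\{l_1,\ldots,l_p\}\subset\{1,\ldots,n-1\}$.

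The key auxiliary fact to invoke next is the identity $H(n,\cM_{n-1})=h_{nk}$. The $\leqs$ direction is witnessed by the one-step path $n\to k$, while the $\geqs$ direction holds because any path starting from $n$ begins with an edge of cost at least $h_{nk}$, and this first-step cost always appears in the maximum defining the path cost. Combined with Assumption~\ref{assump_asym_meta} applied at the top level $k=n$, this yields
\begin{equation*}
 h_{mm'} \;\geqs\; H(m,m') \;\geqs\; H(n,\cM_{n-1}) + \theta \;=\; h_{nk} + \theta
 \qquad\forall\, m\in\{1,\ldots,n-1\},\; m'\neq m\;.
\end{equation*}
Applying this bound to each of the $p$ terms of $S-U_q$ gives $S-U_q \geqs p(h_{nk}+\theta)$ for every $q$, and maximising over $q$ delivers \eqref{eq:b1d13a}.

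For \eqref{eq:b1d13b} I would repeat the same computation with $l_{p+1}=n$ and $l_{p+2}=j$, so that $S'-U_q$ becomes a sum of $p+1$ terms. Exactly one of these terms has first index equal to $n$: the boundary term $h_{nj}$ (when $q\leqs p+1$) or the reverse term $h_{nl_p}$ (when $q=p+2$). That single $n$-originating term is only bounded below by $h_{nk}$ (by minimality of $k$), while the remaining $p$ terms still obey the $h_{nk}+\theta$ bound, giving $S'-U_q \geqs (p+1)h_{nk}+p\theta$; maximising over $q$ then yields \eqref{eq:b1d13b}. The only genuinely delicate step is the identification $H(n,\cM_{n-1})=h_{nk}$ (rather than merely $\leqs$); once that is secured, everything else reduces to counting the edges of the path and invoking the hierarchy once per edge.
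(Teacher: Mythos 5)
Your proof is correct, and it takes a genuinely different route from the paper's. The paper proves the $p=1$ case of \eqref{eq:b1d13a} by splitting on which alternative in the recursion \eqref{eq:asym01} for $h_{ilj}$ is attained, then declares that the argument extends by induction to general $p$, and treats \eqref{eq:b1d13b} similarly. You instead unroll \eqref{eq:asym01} once and for all into the closed form $h_{il_1\ldots l_pj}=\max_{1\leqs q\leqs p+1}U_q$, carry out the telescoping cancellation $S-U_q$ explicitly, and then apply to each of the $p$ (resp.\ $p+1$) surviving edge weights the single bound $h_{mm'}\geqs h_{nk}+\theta$ for $m\in\{1,\ldots,n-1\}$ --- which is exactly the fact the paper invokes as ``$h_{ij}-h_{nk}\geqs\theta$'', and which you correctly derive from Assumption~\ref{assump_asym_meta} at level $k=n$ together with $H(n,\cM_{n-1})=h_{nk}$. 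You also spell out that last identity, which the paper uses only implicitly (it is recorded later, in \eqref{eq:b1d21}). Your bookkeeping in \eqref{eq:b1d13b}, that exactly one of the $p+1$ left-over terms has first index $n$ and therefore only satisfies the weaker bound $\geqs h_{nk}$, is precisely what produces $p\theta$ rather than $(p+1)\theta$, and you get this right. The net effect is that your closed-form expansion relocates the induction to the trivial place (the formula for $U_q$) and turns the estimate itself into an explicit term count; this is cleaner and more self-contained than the paper's sketch, though both arguments rest on the same underlying hierarchy estimate.
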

\begin{proof}
We prove first~\eqref{eq:b1d13a} for $p=1$. If
$h_{il}>h_{il}-h_{li}+h_{lj}$ then $h_{ilj}=h_{il}$. This implies 
$h_{il}+h_{lj}-h_{nk} = h_{ilj} + (h_{lj}-h_{nk}) \geqs h_{ilj}+\theta$, where
we have used~\eqref{eq:asym04}. Otherwise $h_{ilj} = h_{il}-h_{li}+h_{lj}$, and
then $h_{il}+h_{lj}-h_{nk} = h_{ilj} + (h_{li}-h_{nk}) \geqs h_{ilj}+\theta$. 
The proof easily extends by induction to general $p$, using the
definition~\eqref{eq:asym01} of communication heights and the fact that
$h_{ij}-h_{nk} \geqs \theta$ for $i=1,\dots n-1$. 

To prove the second inequality~\eqref{eq:b1d13b} for $p=1$, we use that if 
$h_{iln} \geqs h_{il}-h_{li}+h_{ln}-h_{nl}+h_{nj}$, then $h_{ilnj} = h_{iln}$
and thus $h_{il}+h_{ln}+h_{nj}-2h_{nk} = (h_{il}+h_{ln}-h_{nk})+
(h_{nj}-h_{nk})$ so that the conclusion follows from~\eqref{eq:b1d13a} and the
fact that $h_{nj}\geqs h_{nk}$. Otherwise we have 
$h_{ilnj} = h_{il}-h_{li}+h_{ln}-h_{nl}+h_{nj}$ and  
$h_{il}+h_{ln}+h_{nj}-2h_{nk} = h_{ilnj}+(h_{li}-h_{nk})+(h_{nl}-h_{nk})$,
which is greater or equal $h_{ilnj} + \theta$. The proof then extends by
induction to general $p$. 
\end{proof}

\begin{cor}
\label{cor_b1d1} 
For all $i\neq j\in\set{0,\dots,n-1}$, 
\begin{equation}
 \label{eq:b1d14}
 \tilde h_{ij} = \tilde h^0_{ij} \wedge R_{ij} + \Order{\eps\e^{-\theta/\eps}}
 \qquad 
 \text{where }
 R_{ij} \geqs H(i,j) + \theta\;.
\end{equation} 
\end{cor}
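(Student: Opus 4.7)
The plan is to read $\tilde h_{ij}$ off from expression~\eqref{eq:b1d12} and apply Lemma~\ref{lem_b1d3} term by term. Define
\begin{equation}
 R_{ij} = \inf_{\substack{p\geqs 1 \\ 1\leqs l_1,\dots, l_p\leqs{n-1}}}
 \Bigpar{h_{il_1}+h_{l_1l_2}+\dots+h_{l_pn}+h_{nj} - (p+1)h_{nk}}\;,
\end{equation}
so that \eqref{eq:b1d12} reads exactly $\tilde h_{ij}=\tilde h^0_{ij}\wedge R_{ij}+\Order{\eps\e^{-\theta/\eps}}$. This identifies the quantity $R_{ij}$ and reduces the statement to proving the bound $R_{ij}\geqs H(i,j)+\theta$.

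For each admissible $(p;l_1,\dots,l_p)$ with $p\geqs 1$, inequality~\eqref{eq:b1d13b} of Lemma~\ref{lem_b1d3} gives
\begin{equation}
 h_{il_1}+h_{l_1l_2}+\dots+h_{l_pn}+h_{nj} - (p+1)h_{nk}
 \geqs h_{il_1\dots l_pnj} + p\theta\;.
\end{equation}
Since $h_{il_1\dots l_pnj}$ is the $(p+2)$-step communication cost along the path $(i,l_1,\dots,l_p,n,j)$ from $i$ to $j$, it is bounded below by the infimum over all paths, which is $H(i,j)$ by Definition~\ref{def_comm_heights}. Using $p\geqs 1$, we obtain
\begin{equation}
 h_{il_1}+h_{l_1l_2}+\dots+h_{l_pn}+h_{nj} - (p+1)h_{nk}
 \geqs H(i,j)+\theta\;.
\end{equation}
Taking the infimum over $p$ and $(l_1,\dots,l_p)$ yields $R_{ij}\geqs H(i,j)+\theta$, which concludes the proof.

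There is essentially no obstacle: the corollary is a straightforward bookkeeping step that packages the content of Lemma~\ref{lem_b1d3} together with the definition of communication height. The only small point to watch is that the infimum in~\eqref{eq:b1d12} starts at $p=1$, which is exactly what is needed to extract a factor $p\theta\geqs\theta$; had $p=0$ been allowed, one would recover the $\tilde h^0_{ij}$ term and no $\theta$ gap. Thus the role of this corollary is to show that any correction to the first-order approximation $\tilde h^0_{ij}$ already analysed in Proposition~\ref{prop:b1d1} is pushed above $H(i,j)$ by at least $\theta$, so that in the final combination $\tilde h^0_{ij}\wedge R_{ij}$ only the first-order term matters to leading exponential order.
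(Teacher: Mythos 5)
Your proof is correct and takes essentially the same route the paper does: read $R_{ij}$ off as the $p\geqs 1$ infimum in \eqref{eq:b1d12}, apply \eqref{eq:b1d13b} to lower-bound each term by $h_{il_1\dots l_pnj}+p\theta\geqs H(i,j)+\theta$, and take the infimum. The paper merely cites the three ingredients without spelling out the chaining step; your write-up supplies exactly that chaining, including the correct observation that $p\geqs 1$ is what produces the uniform gap of $\theta$.
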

\begin{proof}
This follows directly from~\eqref{eq:b1d12}, \eqref{eq:b1d13b} and the
definition~\eqref{eq:asym02} of the communication height $H(i,j)$. 
\end{proof}

\begin{cor}
\label{cor_b1d2}
Communication heights are preserved to leading order in $\eps$, that is, 
\begin{equation}
 \label{eq:bld15}
 \widetilde H(i,j) = H(i,j) + \Order{\eps\e^{-\theta/\eps}}
 \qquad \forall i,j\in\set{1,\dots,n-1}\;.
\end{equation} 
\end{cor}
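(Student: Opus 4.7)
The plan is to establish the equality $\widetilde H(i,j) = H(i,j) + \Order{\eps\e^{-\theta/\eps}}$ via matching upper and lower bounds on $\widetilde H(i,j)$.

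The upper bound is immediate from what is already assembled. Corollary~\ref{cor_b1d1} gives $\tilde h_{lm} \leqs \tilde h^0_{lm} + \Order{\eps\e^{-\theta/\eps}}$ edge by edge, and the inductive cost formula~\eqref{eq:asym01} is monotone in each weight; hence every path $\tilde\gamma$ from $i$ to $j$ in the reduced graph has $\tilde h$-cost at most its $\tilde h^0$-cost, up to the same error. Minimising over $\tilde\gamma$ and combining with Proposition~\ref{prop:b1d1} yields $\widetilde H(i,j) \leqs \widetilde H^0(i,j) + \Order{\eps\e^{-\theta/\eps}} = H(i,j) + \Order{\eps\e^{-\theta/\eps}}$.

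For the lower bound I would use a path-lifting argument based on the saddle-height reformulation~\eqref{eq:asym03a} of communication heights. Starting from a near-optimal path $\tilde\gamma = (i,l_1,\dots,l_p,j)$ in the reduced graph, I construct a path $\gamma$ in the original graph whose maximal saddle height satisfies $\max V_{(\cdot,\cdot)} \leqs V_i + \widetilde H(i,j) + \Order{\eps\e^{-\theta/\eps}}$, so that $H(i,j) \leqs \widetilde H(i,j) + \Order{\eps\e^{-\theta/\eps}}$. Each edge $(l_{r-1},l_r)$ of $\tilde\gamma$ is lifted according to the dichotomy of Corollary~\ref{cor_b1d1}. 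If $\tilde h_{l_{r-1}l_r} = \tilde h^0_{l_{r-1}l_r} + \Order{\eps\e^{-\theta/\eps}}$, the edge is replaced by whichever of $(l_{r-1},l_r)$ or $(l_{r-1},n,l_r)$ realises the minimum appearing in Lemma~\ref{lem_b1d2}; its maximal saddle height is then $V_{l_{r-1}} + (h_{l_{r-1}l_r}\wedge h_{l_{r-1}nl_r}) \leqs V_{l_{r-1}} + \tilde h^0_{l_{r-1}l_r}$. If instead $\tilde h_{l_{r-1}l_r} = R_{l_{r-1}l_r} + \Order{\eps\e^{-\theta/\eps}}$ with $R_{l_{r-1}l_r}\geqs H(l_{r-1},l_r)+\theta$, the edge is replaced by a minimal path in the original graph from $l_{r-1}$ to $l_r$, whose maximal saddle height equals $V_{l_{r-1}} + H(l_{r-1},l_r) \leqs V_{l_{r-1}} + \tilde h_{l_{r-1}l_r} - \theta$. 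Concatenating these replacements produces the desired $\gamma$.

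The main obstacle is verifying that the saddle-height reformulation~\eqref{eq:asym03a} really is available on the reduced graph, i.e.\ that the $\tilde h_{lm}$ satisfy the approximate reversibility~\eqref{eq:bti01} (which is also what will allow the whole block-reduction procedure to be iterated). One first checks that $V_l + \tilde h^0_{lm} = V_m + \tilde h^0_{ml}$ holds \emph{exactly}: using~\eqref{meta02a} to rewrite $V_l + h_{ln}$ as $V_n + h_{nl}$, both sides reduce to $V_{(l,m)} \wedge (V_n + h_{nl} + h_{nm} - h_{nk})$. One then argues that along any minimal $\tilde\gamma$, the edges for which the $R_{lm}$ contribution governs are harmless, because the $+\theta$ gap in Corollary~\ref{cor_b1d1} ensures that they never strictly lower the cost at the accuracy $\Order{\eps\e^{-\theta/\eps}}$ sought here. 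Once this reversibility is in hand, the saddle-height interpretation applies on both sides and the lifting argument above goes through cleanly.
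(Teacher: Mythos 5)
Your argument follows essentially the same route as the paper's proof of Corollary~\ref{cor_b1d2}: the upper bound from the pointwise inequality $\tilde h_{ij}\leqs\tilde h^0_{ij}+\Order{\eps\e^{-\theta/\eps}}$ together with Proposition~\ref{prop:b1d1}, and the lower bound by lifting each segment of a near-optimal path $\tilde\gamma$ back to the original graph via the dichotomy of Corollary~\ref{cor_b1d1}, using the $+\theta$ margin in the $R$-terms to absorb the cost of the lifted sub-paths. The paper compresses the entire lower bound into a single sentence (``Relation~\eqref{eq:b1d14} applied to each segment of $\tilde\gamma$ shows that $\gamma$ is also an optimal path for the original generator''), so your version is simply a more explicit unpacking of the same lifting argument, and making the exact reversibility of $\tilde h^0$ explicit is a reasonable supplementary check that the paper leaves implicit.
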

\begin{proof}
Corollary~\ref{cor_b1d1} and Proposition~\ref{prop:b1d1} directly yield
$\widetilde H(i,j) \leqs H(i,j) + \Order{\eps\e^{-\theta/\eps}}$ since $\tilde
h_{ij} \leqs \tilde h^0_{ij} + \Order{\eps\e^{-\theta/\eps}}$ implies that
maximal heights encountered along paths do not increase. To show equality,
consider an optimal path $\tilde\gamma:i\to j$. Relation~\eqref{eq:b1d14}
applied to each segment of $\tilde\gamma$ shows that $\gamma$ is also an optimal
path for the original generator. 
\end{proof}

Note that this result shows in particular that assumption~\eqref{eq:b1d01} on
reversibility for optimal paths is satisfied by the new communication heights.
We can now state the main result of this section, which characterises the
eigenvalues of a generator admitting a metastable hierarchy. 

\begin{theorem}[Eigenvalues of a metastable generator]
\label{thm_triang_d1} 
Let $L$ be a generator satisfying Assumption~\ref{assump_asym_meta} on
existence of a metastable hierarchy and the reversibility condition for minimal
paths~\eqref{eq:bti01}. For sufficiently small $\eps$, the eigenvalues of $L$
are given by $\lambda_1=0$ and 
\begin{equation}
 \label{eq:bld16}
 \lambda_k = -
 \e^{-H(k,\cM_{k-1})/\eps}
 \bigbrak{1+\Order{\e^{-\theta/\eps}}}\;, 
 \qquad
 k=2,\dots,n\;.
\end{equation} 
\end{theorem}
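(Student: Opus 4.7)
The plan is to prove this by induction on $n$, using the block-triangularisation procedure with $m=1$ developed in Section~\ref{ssec_triang} to peel off the state at the bottom of the metastable hierarchy (which, given the ordering in Assumption~\ref{assump_asym_meta}, is state $n$) one at a time. The base case $n=1$ is trivial, and the induction step reduces the spectrum of $L$ to that of an $(n-1)\times(n-1)$ generator $T^{11}$ together with a single scalar eigenvalue extracted from the block $\tilde a$.

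For the induction step, I would apply Proposition~\ref{prop:b1d2} to obtain the exact matrix $S^{12}$, so that $L$ is similar to the block-triangular matrix $T$ in~\eqref{eq:bti04}. The lower-right scalar block reads
\begin{equation*}
\tilde a = a + L^{21}S^{12} = a + \Order{a\,\e^{-\theta/\eps}} = -\e^{-h_{ns(n)}/\eps}\bigbrak{1+\Order{\e^{-\theta/\eps}}},
\end{equation*}
since $a = L_{nn} = -\e^{-h_{ns(n)}/\eps}[1+\Order{\e^{-\theta/\eps}}]$ from the fact that $L$ is a generator and $s(n)$ realises the minimum among the $h_{nj}$. By Assumption~\ref{assump_asym_meta}, $H(n,\cM_{n-1}) = h_{ns(n)}$, so this identifies $\tilde a$ with the claimed $\lambda_n$. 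The upper-left block $T^{11}$ was shown to be again a generator, and by Corollary~\ref{cor_b1d2} its pairwise communication heights $\widetilde H(i,j)$ coincide to leading order with those of $L$ on $\set{1,\dots,n-1}$.

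To close the induction I then have to verify that the induction hypotheses actually transfer to $T^{11}$. Because the matrix elements of $T^{11}$ are $\smash{\e^{-\tilde h_{ij}/\eps}}$ with $\tilde h_{ij}$ controlled by Corollary~\ref{cor_b1d1}, and because communication heights are preserved up to $\Order{\eps\e^{-\theta/\eps}}$, the metastable order $1\prec 2\prec\dots\prec n-1$ persists with the same separation $\theta$ (possibly shrunk by a harmless multiplicative constant), and the minimum-communication-height $H(k,\cM_{k-1})$ is unchanged. The reversibility condition~\eqref{eq:bti01} likewise transfers, since it only involves these same preserved communication heights on $\set{2,\dots,n-1}$. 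By the induction hypothesis applied to $T^{11}$, its eigenvalues are $0$ and $-\e^{-H(k,\cM_{k-1})/\eps}[1+\Order{\e^{-\theta/\eps}}]$ for $k=2,\dots,n-1$, which combined with $\lambda_n=\tilde a$ yields the theorem.

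The main obstacle will be bookkeeping of the error terms through the induction: each reduction step introduces a multiplicative error $1+\Order{\e^{-\theta/\eps}}$ in communication heights and in prefactors, and one must ensure that after $n-1$ iterations these errors do not accumulate beyond the claimed $\Order{\e^{-\theta/\eps}}$. This relies on the fact that Corollary~\ref{cor_b1d1} gives $\tilde h_{ij} = \tilde h^0_{ij} \wedge R_{ij}$ with $R_{ij}\geqs H(i,j)+\theta$, so that the correction to the dominant exponent is itself exponentially small in $\eps$, and a uniform choice of $\theta$ (taken as the minimum of the original $\theta$ across all reduction steps, which remains strictly positive for $\eps$ small enough) suffices to control the whole induction.
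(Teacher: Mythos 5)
Your proposal follows the paper's own proof very closely: same induction on $n$, same peeling of state $n$ via the exact $S^{12}$ from Proposition~\ref{prop:b1d2}, same identification of $\tilde a$ with $\lambda_n$, and same reliance on Corollary~\ref{cor_b1d2} to carry the metastable hierarchy and the reversibility condition~\eqref{eq:bti01} over to $T^{11}$.

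One step is imprecise and would need repair to make the induction close. You write that ``the upper-left block $T^{11}$ was shown to be again a generator,'' but the lemma in Section~\ref{ssec_triang} only establishes zero row sums for the \emph{first-order} approximation $T_0^{11}=L^{11}-L^{12}A^{-1}L^{21}$. For the exact block one only has $T^{11}(\vone - S^{12})=0$ (when $L\vone=0$), so $T^{11}$ is not, strictly speaking, a generator; its row sums are merely of order $\e^{-\theta/\eps}$ relative to the leading entries. Since the hypothesis of the theorem is that $L$ be a generator (possibly with an absorbing first state), the induction hypothesis cannot be invoked on $T^{11}$ as is. The paper addresses this by adding a cemetery state: make the first row of $T^{11}$ identically zero and adjust the first column so all row sums vanish. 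Because the first row is then zero, this modification changes neither the spectrum (the matrix becomes block lower-triangular with a $1\times 1$ zero block on top) nor, up to $\Order{\e^{-\theta/\eps}}$, the communication heights, so Corollary~\ref{cor_b1d2} still applies. Adding that one sentence to your argument would make it match the paper's proof exactly. Your remarks on error accumulation are sound for fixed $n$; the phrasing about $\theta$ being ``shrunk by a multiplicative constant'' is a bit off (it is the constant in $\Order{\e^{-\theta/\eps}}$ that grows linearly in $n$, not $\theta$ that shrinks), but the conclusion is correct.
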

\begin{proof}
Since $L$ is a generator, necessarily $\lambda_1=0$. Furthermore, 
$L$ has the same eigenvalues as 
\begin{equation}
 \label{eq:b1d20}
 T = 
 \begin{pmatrix}
 T^{11} & 0 \\ T^{12} & \tilde a
 \end{pmatrix}\;,
\end{equation} 
where $\tilde a = a + L^{21}S^{12}$. Assumption~\ref{assump_asym_meta} and the
fact that $L$ is a generator imply that 
\begin{equation}
 \label{eq:b1d21} 
 a = \e^{-h_{nk}/\eps} \bigbrak{1+\Order{\e^{-\theta/\eps}}}\;,
\end{equation} 
with $h_{nk} = h_{ns(n)} = H(n,\cM_{n-1})$. Furthermore, we have
$\norm{L^{21}}=\Order{a}$ and Proposition~\ref{prop:S12} shows that 
$\norm{S^{12}}=\Order{\norm{L^{12}a^{-1}}}=\Order{\e^{-\theta/\eps}}$. 
Thus $\tilde a = a(1+\Order{\e^{-\theta/\eps}})$, which proves~\eqref{eq:bld16}
for $k=n$. 

The remaining eigenvalues $\lambda_2,\dots,\lambda_{n-1}$ are those of
$T^{11}$. Adding, if necessary, a cemetery state, we can make
$T^{11}$ a generator (meaning that we add an identically zero first row to
$T^{11}$ and a first column such that the row sums are all zero).
Corollary~\ref{cor_b1d2} shows that $T^{11}$ admits the same metastable
hierarchy as $L$, up to negligible error terms. Thus the result follows by
induction on the size of $L$.  
\end{proof}

We have thus proved Relation~\eqref{eq:asym05} in Theorem~\ref{thm_asym}, and
by extension the corresponding statements in Theorem~\ref{thm_dim1_trivial} and
Theorem~\ref{thm_dim1_nontrivial}. 

\subsection{The higher-dimensional case}

We consider now the case of an irreducible representation of dimension
$d\geqs2$. Then the generator $L$ has a block structure, with blocks whose
dimensions are multiples of $d$. We add a cemetery state to the system in such
a way that the row sums of $L$ vanish. 
We associate with $L$ an auxiliary matrix $L_*$ which has only one element
$\e^{-h^*(A_i,A_j)/\eps}$ for each pair $(i,j)$ of active orbits, plus the
cemetery state. 

Applying to $L$ the triangularisation algorithm described in
Section~\ref{ssec_triang} changes the blocks of $L$ to leading order according
to 
\begin{equation}
 \label{eq:bdd01}
 L_{ij} \mapsto 
 \widetilde L_{ij} 
 = L_{ij} - L_{in}L_{nn}^{-1}L_{nj}\;.
\end{equation} 
The algorithm induces a transformation on $L_*$ which is equivalent to the
one-dimensional algorithm discussed in the previous section. Thus we conclude
from Theorem~\ref{thm_triang_d1} that communication heights of $L_*$ are
preserved. 

Let us examine the following two cases.

\begin{itemiz}
\item 	Assume $j=s(i)$ is the successor of $i$. Then 
\begin{itemiz}
\item 	If $n \neq s(i)$, then $\widetilde L_{ij} =
L_{ij}[1+\Order{\e^{-\theta/\eps}}]$, because $L_{nj}$ is at most of order 
$L_{nn}$, and $L_{in}$ is negligible with respect to $L_{ij}$.
\item 	If $n = s(i)$, then either $j\neq s(n)$, and then again $\widetilde
L_{ij} = L_{ij}[1+\Order{\e^{-\theta/\eps}}]$, because $L_{nj}$ is negligible
with respect to $L_{nn}$. Or $j=s(n)$, and then $L_{in}L_{nn}^{-1}L_{nj}$ is
comparable to $L_{ij}$. 
\end{itemiz}
We thus conclude that $\widetilde L_{ij} =
L_{ij}[1+\Order{\e^{-\theta/\eps}}]$, unless the graph of successors
contains a path $i\to n\to j$, in which case the leading term of
$L_{ij}$ is modified according to~\eqref{eq:bdd01}. 

\item 	Consider now the case $j=i$. By the previous point, $L_{ii}$ is
modified to leading order by the triangularisation algorithm if and only if
the graph of successors contains a cycle  
$i\to n\to i$. Note that in this case, the modification involves the
two matrices $L_{in}$ and $L_{ni}$. These matrices cannot have been modified to
leading order at a previous step. Indeed, $L_{in}$ has been modified if and
only if there exists a $m\succ n$ such that the graph of successors contains a
path $i\to m\to n$. Assumption~\ref{assump_sym_nondeg} implies that this is
incompatible with the fact that the graph contains $i\to n\to i$. A similar
argument applies to $L_{ni}$. 
\end{itemiz}

It follows that at each step of the triangularisation algorithm, the diagonal
blocks $L_{ii}$ are preserved to leading order, unless $i$ is at the bottom of
a cycle in the graph of successors. This proves Theorem~\ref{thm_dimd}. 


\section{Proofs -- Expected first-hitting times}
\label{sec_hit}

Fix a subset $A\subset\cX$, and let $w_A(x) = \expecin{x}{\tau_A}$ be the
first-hitting time of $A$ when starting in a point $x\in B=\cX\setminus A$. 
It is well known (see for instance~\cite[Chapter~3]{Norris_book}) that for any
$x\in B$, 
\begin{equation}
 \label{FK06}
 \sum_{y\in B} L_{xy}w_A(y) = -1\;.
\end{equation} 
If write $L$ as 
\begin{equation}
 \label{FK07}
 L = 
\begin{pmatrix}
L_{AA} & L_{AB} \\ L_{BA} & L_{BB}      
\end{pmatrix}\;,
\end{equation} 
then~\eqref{FK06} reads 
\begin{equation}
 \label{FK08}
 w_A = - L_{BB}^{-1}\vone\;.
\end{equation} 

\begin{prop}[Expected first-hitting time]
If $L$ satisfies the assumptions of Theorem~\ref{thm_asym} and
$A=\cM_k=\set{1,\dots k}$ with $k\geqs1$, then 
\begin{equation}
 \label{FK09}
 \expecin{x}{\tau_A}
 = \frac{1}{\abs{\lambda_{k+1}}} \brak{1+\Order{\e^{-\theta/\eps}}}
\end{equation}
holds for all $x\in A^c$. 
\end{prop}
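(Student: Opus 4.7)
Plan: The starting point is the identity $w_A = -L_{BB}^{-1}\vone_B$ from \eqref{FK08}, where $B = \cX\setminus\cM_k$. The strategy is to combine the eigenvalue estimates established in Section~\ref{sec_proofT} with a careful computation of $L_{BB}^{-1}\vone_B$, exploiting the block-triangularization machinery already developed.

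First I would augment $L_{BB}$ by a cemetery state $*$ representing the absorbing set $A$, yielding a genuine generator $\widehat L$ on $B\cup\{*\}$ -- the generator of the process killed upon entering $A$. This augmented chain inherits the metastable hierarchy of the original one (with $*$ playing the role of the deepest well, $*\prec k+1\prec\cdots\prec n$), and by Corollary~\ref{cor_b1d2} the relevant communication heights are preserved up to errors of order $\eps\e^{-\theta/\eps}$. Applying Theorem~\ref{thm_triang_d1} to $\widehat L$ then shows that its small eigenvalues are $0,\tilde\mu_1,\dots,\tilde\mu_{n-k}$ with $|\tilde\mu_i|=|\lambda_{k+i}|[1+\Order{\e^{-\theta/\eps}}]$, and in particular $|\tilde\mu_1|=|\lambda_{k+1}|[1+\Order{\e^{-\theta/\eps}}]$ is the smallest-modulus eigenvalue of $-L_{BB}$.

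Next, I would iterate the one-step block-triangularization of Section~\ref{ssec_triang} directly on $L_{BB}$, removing states from the top of the hierarchy one by one, to obtain $L_{BB}=S\,T_{BB}\,S^{-1}$ where $T_{BB}$ is lower-triangular with diagonal entries $\tilde\mu_1,\dots,\tilde\mu_{n-k}$ and $S = \one+\Order{\e^{-\theta/\eps}}$ by Proposition~\ref{prop:b1d2}. Then
\begin{equation*}
w_A = -S\,T_{BB}^{-1}\,S^{-1}\vone_B, \qquad S^{-1}\vone_B = \vone_B + \Order{\e^{-\theta/\eps}},
\end{equation*}
so it remains to compute $y := T_{BB}^{-1}\vone_B$ by forward substitution. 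The first component is exactly $y_1 = 1/\tilde\mu_1$; for $j>1$ the recurrence $y_j=\bigl[1-\sum_{l<j}T_{BB,jl}y_l\bigr]/\tilde\mu_j$ is controlled by recognizing that the dominant sub-diagonal entry $T_{BB,j\,s(j)}$ has magnitude comparable to $\tilde\mu_j$ (this is exactly the rate to the successor carried over from the triangularization), so that $y_j = y_{s(j)}\bigl[1+\Order{\e^{-\theta/\eps}}\bigr]$. By induction on the hierarchy depth, every $y_j$ equals $|\tilde\mu_1|^{-1}[1+\Order{\e^{-\theta/\eps}}]$, and multiplying back by $S$ yields the claimed uniform estimate.

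The main obstacle is the bookkeeping in the forward substitution: one must verify that in the triangular matrix $T_{BB}$ each state's dominant successor indeed corresponds to a row already processed, so that the leading value $|\tilde\mu_1|^{-1}$ propagates across all rows rather than being diluted into sub-leading contributions of order $|\tilde\mu_j|^{-1}$ for higher $j$. This propagation relies on Assumption~\ref{assump_asym_meta} together with the uniqueness of the edges $e^*(k)$ to ensure that the graph of successors on $B\cup\{*\}$ descends (through $B$) all the way down to the level of $|\tilde\mu_1|$. A parallel (and perhaps cleaner) route would use reversibility: $L_{BB}$ is self-adjoint in the $\pi$-weighted inner product, and one would decompose $w_A$ in its orthonormal eigenbasis $(\phi_i)$, showing via the explicit construction of $\phi_1$ through the same iterated triangularization that $\langle\phi_1,\vone_B\rangle_\pi\phi_1(x)/\|\phi_1\|_\pi^2 = 1+\Order{\e^{-\theta/\eps}}$ uniformly in $x\in B$; the higher modes $i\geqs 2$ then contribute only corrections of relative size $|\mu_1/\mu_i|=\Order{\e^{-\theta/\eps}}$.
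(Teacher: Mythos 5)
Your plan starts from the same identity $w_A = -L_{BB}^{-1}\vone$ and invokes the same block-triangularization machinery from Section~\ref{ssec_triang}, so in substance you are on the paper's track. The difference is one of organization: you fully iterate the triangularization to reach a lower-triangular $T_{BB}$ and then solve by forward substitution, whereas the paper proceeds by induction on the size $m=n-k$ of $L_{BB}$, peeling off a single state per step. These are two packagings of the same computation: the paper's inductive hypothesis $(T^{11})^{-1}\vone = \abs{\lambda_{k+1}}^{-1}\vone\,[1+\Order{\e^{-\theta/\eps}}]$ delivers the uniform estimate on all but the last component in one stroke, and the last component $\tilde a^{-1}\bigl[1 - L^{21}(T^{11})^{-1}\vone(1+\Order{\e^{-\theta/\eps}})\bigr]$ is then reduced to $\abs{\lambda_{k+1}}^{-1}[1+\Order{\e^{-\theta/\eps}}]$ using $L^{21}\vone \approx \abs{\lambda_n}$ together with the gap $\abs{\lambda_{k+1}}/\abs{\lambda_n} = \Order{\e^{-\theta/\eps}}$.

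What you flag as \emph{the main obstacle} — showing that the dominant sub-diagonal entry $T_{BB,j\,s(j)}$ is comparable to $\tilde\mu_j$, so that the value $\abs{\lambda_{k+1}}^{-1}$ propagates down the rows — is precisely the content that the paper's inductive step packages into the single identity
$L^{21}(T^{11})^{-1}\vone = \frac{L^{21}\vone}{\abs{\lambda_{k+1}}}[1+\Order{\e^{-\theta/\eps}}] = \frac{\abs{\lambda_n}}{\abs{\lambda_{k+1}}}[1+\Order{\e^{-\theta/\eps}}]$.
You assert this propagation rather than prove it, so your proposal stops short of a complete argument; the paper's induction is exactly the device that makes that step a one-line computation instead of $n-k$ separate estimates. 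A fully worked-out version of your forward substitution would coincide with the paper's proof line by line. The alternative spectral route you sketch at the end (expand $w_A$ in the $\pi$-weighted eigenbasis of $L_{BB}$ and isolate the fundamental mode) is a genuinely different idea and is not what the paper does; it would require its own analysis of $\phi_1$ and of the overlap $\pscal{\phi_1}{\vone}$, which you do not supply.
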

\begin{proof}
The proof is by induction on the size $m=n-k$ of $L_{BB}$. The result is
obviously true if $m=1$, since the lower-right matrix element of $L$ is equal
to the eigenvalue $\lambda_n$, up to an error $1+\Order{\e^{-\theta/\eps}}$. 
Thus assume $m>1$ and write 
\begin{equation}
 \label{FK10}
 L_{BB} = 
 \begin{pmatrix}
 L^{11} & L^{12} \\ L^{21} & a
 \end{pmatrix}\;, 
\end{equation} 
with blocks $L^{11}\in\R^{(m-1)\times(m-1)}$,
$L^{12}\in\R^{(m-1)\times1}$, $L^{21}\in\R^{1\times(m-1)}$ and $a\in\R$. 
Using~\eqref{eq:bti03} and~\eqref{eq:bti04}, we see that 
\begin{align}
\nonumber
L_{BB}^{-1} \vone = ST^{-1}S^{-1} 
&= 
\begin{pmatrix}
\one & S_{12} \\ 0 & 1 
\end{pmatrix}
\begin{pmatrix}
(T^{11})^{-1} & 0 \\
-\tilde a^{-1}L^{21}(T^{11})^{-1} & \tilde a^{-1} 
\end{pmatrix}
\begin{pmatrix}
\one & -S_{12} \\ 0 & 1 
\end{pmatrix}
\begin{pmatrix}
\vone \\
1
\end{pmatrix}
\\
\nonumber
&= 
\begin{pmatrix}
\one & S_{12} \\ 0 & 1 
\end{pmatrix}
\begin{pmatrix}
(T^{11})^{-1} & 0 \\
-\tilde a^{-1}L^{21}(T^{11})^{-1} & \tilde a^{-1} 
\end{pmatrix}
\begin{pmatrix}
\vone - S^{12} \\
1
\end{pmatrix}
\\
&=
\begin{pmatrix}
\one & S_{12} \\ 0 & 1 
\end{pmatrix}
\begin{pmatrix}
(T^{11})^{-1} [\vone + \Order{\e^{-\theta/\eps}}] \\
\tilde a^{-1} \bigbrak{1 -
L^{21}(T^{11})^{-1}\vone(1+\Order{\e^{-\theta/\eps}})}
\end{pmatrix}\;.
 \label{FK11}
\end{align}
By induction, we may assume that 
\begin{equation}
 \label{FK12}
 (T^{11})^{-1}\vone = \frac{1}{\abs{\lambda_{k+1}}} \vone 
 \bigbrak{1 + \Order{\e^{-\theta/\eps}}}\;,
\end{equation} 
which implies 
\begin{equation}
 \label{FK13}
 L^{21}(T^{11})^{-1}\vone = \frac{L^{21}\vone}{\abs{\lambda_{k+1}}} 
 \bigbrak{1 + \Order{\e^{-\theta/\eps}}}
 = \frac{\abs{\lambda_n}}{\abs{\lambda_{k+1}}} 
 \bigbrak{1 + \Order{\e^{-\theta/\eps}}}\,.
\end{equation} 
Plugging this into~\eqref{FK11} and using the fact that
$\abs{\lambda_{k+1}}/\abs{\lambda_n} = \Order{\e^{-\theta/\eps}}$ we obtain 
\begin{equation}
 \label{FK14}
 L_{BB}^{-1} \vone 
= 
\begin{pmatrix}
\one & S_{12} \\ 0 & 1 
\end{pmatrix}
\begin{pmatrix}
\abs{\lambda_{k+1}}^{-1} \vone \bigbrak{1 + \Order{\e^{-\theta/\eps}}}  \\
\abs{\lambda_{k+1}}^{-1} \bigbrak{1 + \Order{\e^{-\theta/\eps}}} 
\end{pmatrix}
= \frac{1}{\abs{\lambda_{k+1}}}
\vone \bigbrak{1 + \Order{\e^{-\theta/\eps}}}\;,
\end{equation} 
which concludes the proof. 
\end{proof}

We point out that~\eqref{FK09} does not hold, in general, when $A$ is not a set
$\cM_k$ of the metastable hierarchy. For a counterexample, for instance
\cite[Example~3.2]{Berglund_irs_MPRF}.

This completes the proof of Theorem~\ref{thm_asym}, and thus also of
Theorem~\ref{thm_dim1_trivial} and Theorem~\ref{thm_dim1_nontrivial}.

\appendix

\section{Critical points of the constrained system}
\label{appendix} 

We give in this appendix a brief description of how we obtained the local
minima and saddles of index $1$ for Example~\ref{ex:Kawasaki} (a more detailed
analysis of this system will be published elsewhere). The case $N=4$ was first
studied in~\cite{Hun_Masterthesis}.

We look for extrema of the potential 
\begin{equation}
 \label{app01} 
 V_\gamma(x) = \sum_{i\in\Z/N\Z} U(x_i) + \frac{\gamma}{4}
\sum_{i\in\Z/N\Z}(x_{i+1}-x_i)^2
\end{equation} 
where $U(x)=\frac14 x^4 - \frac12 x^2$, under the constraint 
\begin{equation}
 \label{app02}
 \sum_{i\in\Z/N\Z} x_i = 0\;. 
\end{equation} 
We will apply a perturbative argument in $\gamma$, and thus start by
considering the case \mbox{$\gamma=0$}. Then the extremalisation problem is
equivalent to solving $\nabla V_0(x)=\lambda\vone$ on $\set{\sum x_i=0}$, or
equivalently 
\begin{equation}
\label{app03} 
f(x_i) = \lambda \qquad \forall i\in\Z/N\Z\;, 
\end{equation}
where $f(x)=U'(x)=x^3-x$ and $\lambda$ is the Lagrange multiplier. There are
three cases to consider:

\begin{enum}
\item 	If $\abs{\lambda}>2/(3\sqrt{3})$, then $f(x)=\lambda$ admits only one
real solution, different from zero, and the constrained problem has no
solution. 

\item 	If $\abs{\lambda}=2/(3\sqrt{3})$, then $f(x)=\lambda$ admits two real 
solutions, given by $\pm 1/\sqrt{3}$ and $\mp 2/\sqrt{3}$. Then solutions exist
only if $N$ is a multiple of $3$ (and they may give rise to degenerate families
of stationary points).  

\item 	If $\abs{\lambda}<2/(3\sqrt{3})$, then $f(x)=\lambda$ admits three
different real solutions $\alpha_0$, $\alpha_1$ and $\alpha_2$. We denote by
$n_i\in\N_0$ the number of $x_i$ equal to $\alpha_i$, and reorder the $\alpha_i$
in such a way that $n_0\leqs n_1\leqs n_2$. Then the constrained problem is
equivalent to 
\begin{align}
\nonumber
\alpha_0 + \alpha_1 + \alpha_2 &= 0 \\
\nonumber
\alpha_0 \alpha_1 \alpha_2 &= \lambda \\
\nonumber
\alpha_0\alpha_1 + \alpha_0\alpha_2 + \alpha_1\alpha_2 &= -1 \\
n_0\alpha_0 + n_1\alpha_1 + n_2\alpha_2 &= 0
\label{app04} 
\end{align}
with $n_0+n_1+n_2=N$. 
This can be seen to be equivalent to 
\begin{equation}
 \label{app05}
 (\alpha_0,\alpha_1,\alpha_2) = 
 \pm\frac{1}{R^{1/2}}
 (n_2-n_1,n_0-n_2,n_1-n_0)
\end{equation} 
where $R=n_0^2+n_1^2+n_2^2-n_0n_1-n_0n_2-n_1n_2$. 
\end{enum}

This shows that if $N$ is not a multiple of $3$, then all solutions of the
constrained problem can be indexed by ordered triples $(n_0,n_1,n_2)$ of
non-negative integers whose sum is $N$. By examining the Hessian of the
potential (taking into account the constraint), one can prove the following
result.

\begin{theorem}
\label{thm:hessian} 
Assume $N\geqs5$ is not a multiple of $3$. Then for $\gamma=0$ 
\begin{enum}
\item 	all local minima are given by ordered triples $(0,n_1,N-n_1)$ with
$3n_1>N$; 
\item 	all saddles of index $1$ are given by ordered triples
$(1,n_1,N-n_1-1)$ with $3n_1>N$. 
\end{enum}
If $N=4$, then all local minima are given by the triple $(0,2,2)$ and all
saddles of index $1$ by the triple $(1,1,2)$. 
\end{theorem}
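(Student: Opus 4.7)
I would analyse the signature of the constrained Hessian cluster-by-cluster. At $\gamma=0$, the unconstrained Hessian $D=\hessian{V_0}$ is diagonal with entry $\mu_c:=3\alpha_c^2-1$ on every coordinate of the $c$-th cluster ($c=0,1,2$). Its restriction to the tangent space $T=\setsuch{v\in\R^N}{\sum v_i=0}$ splits orthogonally as $W\oplus U$, where $W=\bigoplus_c W_c$ with $W_c$ the $(n_c-1)$-dimensional subspace of vectors supported in cluster $c$ and summing to zero (on which $D$ acts as $\mu_c\cdot\one$), and $U$ the complementary space of vectors constant on each cluster with $\sum_c n_c v_c=0$. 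Thus each $W_c$ contributes $n_c-1$ eigenvalues of sign $\sign(\mu_c)$, and the contribution of $U$ (of dimension $1$ if $n_0=0$, otherwise $2$) has to be computed separately.

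The key algebraic ingredient is a pair of identities: setting $d_c=n_{c+1}-n_{c-1}$ (cyclic) and $R=\frac12(d_0^2+d_1^2+d_2^2)$, so that $\mu_c=3d_c^2/R-1$, one checks that $\sum_c\mu_c=3$ and $\sum_{c<c'}\mu_c\mu_{c'}=0$, the latter from the elementary relation $\sum_{c<c'}d_c^2d_{c'}^2=R^2$ (using $\sum_c d_c=0$). These force $\mu_c\leqs 3$ and hence at most one $\mu_c$ negative. A Schur-complement calculation (for $n_0\geqs 1$) shows that the determinant of $D|_U$ is proportional to $n_1\mu_0\mu_2+n_2\mu_0\mu_1+n_0\mu_1\mu_2$, which using the second identity rewrites as
\begin{equation*}
(n_0-1)\mu_1\mu_2+(n_1-1)\mu_0\mu_2+(n_2-1)\mu_0\mu_1\;.
\end{equation*}
Applying the identity once more, this becomes $\mu_0[(n_2-n_0)\mu_1+(n_1-n_0)\mu_2]$ when $\mu_0<0$, and $\mu_2[(n_0-n_2)\mu_1+(n_1-n_2)\mu_0]$ when $\mu_2<0$. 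For $n_0=0$, $U$ is one-dimensional and a direct computation yields the positive form $2Ns^2n_1n_2$.

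Under the ordering $n_0\leqs n_1\leqs n_2$ one has $|d_1|\geqs\max(|d_0|,|d_2|)$, so $\mu_1>0$, and an elementary calculation gives $\mu_0>0\Leftrightarrow d_0>d_2\Leftrightarrow\mu_2<0$; the equality $d_0=d_2$ is ruled out since it forces $n_0,n_1,n_2$ into arithmetic progression, hence $N=3n_1$, contradicting $3\nmid N$. Thus for $N\geqs 5$ with $3\nmid N$, exactly one of $\mu_0,\mu_2$ is strictly negative. When $\mu_0<0$ the bracket above is strictly positive (since $n_0<n_2$), so $\det D|_U<0$ and $U$ contributes signature $(1,1)$; when $\mu_2<0$ it is strictly negative, so $\det D|_U>0$, and the interlacing bound on Morse indices (combined with $I_W=n_2-1$ in this case) forces the $U$-signature to be $(2,0)$.

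Putting the pieces together, a local minimum must avoid all negative contributions, which forces $n_0=0$ and $\mu_2>0$; the explicit formula $\mu_2=N(3n_1-N)/R$ then gives condition~(i). An index-$1$ saddle must have exactly one negative eigenvalue. The case $\mu_2<0$ contributes $n_2-1$ negatives from $W$ alone, which exceeds $1$ for $N\geqs 5$ (since the ordering gives $n_2\geqs 3$), so it is excluded. The case $\mu_0<0$ contributes $n_0-1$ negatives from $W$ plus $1$ from $U$, totalling $n_0$; hence index $1$ forces $n_0=1$. The condition $\mu_0<0$ then becomes $|d_0|<|d_2|$, i.e.\ $n_2-n_1<n_1-1$, i.e.\ $3n_1>N$, yielding condition~(ii). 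The case $N=4$ is handled by enumerating the four ordered triples $(0,0,4),(0,1,3),(0,2,2),(1,1,2)$. The main obstacle is controlling the sign of $\det D|_U$: the naive Schur-complement expression is a sum of products with mixed signs, and only after the two-step rewrite using $\sum_{c<c'}\mu_c\mu_{c'}=0$ does a manifestly-signed formula emerge.
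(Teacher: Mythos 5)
The paper does not actually give a proof of this theorem: the appendix only asserts that the result follows ``by examining the Hessian of the potential (taking into account the constraint)'' and defers details to a forthcoming publication and, for $N=4$, to a master's thesis. There is therefore no paper proof to compare against, but your argument is correct and carries out precisely the Hessian examination the authors allude to. The orthogonal splitting $T=W\oplus U$ of the tangent space into within-cluster and cluster-constant directions, the Schur-complement identity $\det D|_U\propto n_1\mu_0\mu_2+n_2\mu_0\mu_1+n_0\mu_1\mu_2$, the symmetric-function identities $\sum_c\mu_c=3$ and $\sum_{c<c'}\mu_c\mu_{c'}=0$ together with the consequence that at most one $\mu_c$ is negative, the two manifestly-signed rewritings of $\det D|_U$, the one-dimensional computation for $n_0=0$, and the final index-counting all check out.

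Two small slips, neither of which changes the outcome. First, the chain ``$\mu_0>0\Leftrightarrow d_0>d_2\Leftrightarrow\mu_2<0$'' has its outer terms reversed: since $d_0=n_1-n_2\leqs0$ and $d_2=n_0-n_1\leqs0$, the inequality $d_0>d_2$ says $|d_0|<|d_2|$, i.e.\ $3n_1>N$, which you correctly identify two lines later as the condition for $\mu_0<0$ (and hence $\mu_2>0$). Second, in excluding $\mu_2<0$ for index-$1$ saddles, ``the ordering gives $n_2\geqs3$'' is a little too quick: the ordering alone only yields $n_2\geqs\lceil N/3\rceil\geqs2$; one needs in addition the hypothesis $3n_1<N$ (which is exactly $\mu_2<0$) together with $N\geqs5$, since $n_2\leqs2$ with $3n_1<N\leqs 2n_1+2$ would force $n_1\leqs1$ and then $N\leqs4$.
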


Using the De-Moivre--Laplace formula, one can show that for large $N$, the
number of local minima grows like $2^N$, while the number of saddles of index
$1$ grows like $N2^N$. 

In the case $N=4$, 
\begin{itemiz}
\item 	the triple $(0,2,2)$ yields $6$ local
minima, having each two coordinates equal to $1$ and two coordinates equal to
$-1$; 
\item 	the triple $(1,1,2)$ yields $12$ saddles of index $1$, having each two
coordinates equal to $0$, one coordinate equal to $1$ and the other one equal
to $-1$. 
\end{itemiz}
One can check the octahedral structure of the associated graph by
constructing paths from each saddle to two different local minima, along which
the potential decreases. For instance, the path $\setsuch{(1,t,-t,-1)}{-1\leqs
t\leqs 1}$ interpolates between the local minima $(1,-1,1,-1)$ and
$(1,1,-1,-1)$ via the saddle $(1,0,0,-1)$, and the value of the potential along
this path is $2U(t)$, which is decreasing in $\abs{t}$ on $[-1,1]$. 

\begin{table}[ht]
\begin{center}
\begin{tabular}{|c|c|c|c|c|}
\hline
\hlinespace
& $\gamma=0$ & $\gamma>0$ & & $V_\gamma$   \\
\hline 
\hline 
\hlinespace
$a$ & $(1,1,-1,-1)$ & $(x,x,-x,-x)$ & $x=\sqrt{1-\gamma}$ & $-(1-\gamma)^2$ \\
$b$ & $(1,-1,1,-1)$ & $(x,-x,x,-x)$ & $x=\sqrt{1-2\gamma}$ & $-(1-2\gamma)^2$ \\
$a$--$a'$ & $(1,0,-1,0)$ & $(x,0,-x,0)$ & $x=\sqrt{1-\gamma}$ &
$-\frac12(1-\gamma)^2$ \\
$a$--$b$& $(1,-1,0,0)$ & $(x,-x,y,-y)$ &
$x,y=\frac{\sqrt{2-\gamma}\pm\sqrt{2-5\gamma}}{\sqrt8}$ & 
$-\frac{1}{8}(4-12\gamma+7\gamma^2)$ \\
\hline 
\end{tabular}
\end{center}
\caption[]{Local minima and saddles of index $1$ for the case $N=4$, with the
value of the potential. The relevant heights are 
$h_{ab} = V_\gamma(a$--$b) - V_\gamma(a)$, 
$h_{ba} = V_\gamma(a$--$b) - V_\gamma(b)$, and 
$h_{aa'} = V_\gamma(a$--$a') - V_\gamma(a)$.}
\label{table:orbits_4} 
\end{table}

In the case $N=8$, 
\begin{itemiz}
\item 	the triple $(0,4,4)$ yields $\binom84 = 70$ local minima,
having each four coordinates equal to $1$ and four coordinates equal to $-1$; 
\item 	the triple $(0,3,5)$ yields $2\binom83 = 112$ local minima, having three
coordinates equal to $\pm\alpha=\pm 5/\sqrt{19}$ and five coordinates equal to
$\pm\beta = \mp 3/\sqrt{19}$; 
\item 	and the triple $(1,3,4)$ yields $2\frac
{8!}{1!3!4!} = 560$ saddles of index $1$, with one coordinate equal to
$\mp1/\sqrt{7}$, three coordinates equal to $\pm3/\sqrt{7}$ and four coordinates
equal to $\mp2/\sqrt{7}$. 
\end{itemiz}
The connection rules stated in Section~\ref{ssec:ex_N8} can again be checked by
constructing paths along which the potential decreases.

Since the Hessian is nondegenerate at the stationary points listed by
Theorem~\ref{thm:hessian}, the implicit-function theorem applies, and shows that
these points persist, with the same stability, for sufficiently small positive
$\gamma$. In the case $N=4$, the coordinates can even be computed explicitly
(Table~\ref{table:orbits_4}), drawing on the fact that they keep the same
symmetry as for $\gamma=0$.

The value of the potential at the stationary points can then be computed,
exactly for $N=4$ and perturbatively to second order in $\gamma$ for $N=8$,
which allows to determine the metastable hierarchy.



{\small
\bibliography{BD}
\bibliographystyle{abbrv}               
}

\newpage

 \tableofcontents

\vfill

\bigskip\bigskip\noindent
{\small
Universit\'e d'Orl\'eans, Laboratoire {\sc Mapmo} \\
{\sc CNRS, UMR 7349} \\
F\'ed\'eration Denis Poisson, FR 2964 \\
B\^atiment de Math\'ematiques, B.P. 6759\\
45067~Orl\'eans Cedex 2, France \\
{\it E-mail addresses: }{\tt nils.berglund@univ-orleans.fr}, 
{\tt sebastien.dutercq@univ-orleans.fr}


\end{document}